\numberwithin{equation}{section}
\newtheorem{thm}{Theorem}[section]
\newtheorem{lemma}[thm]{Lemma}
\newtheorem{rem}[thm]{Remark}
\newtheorem{defn}[thm]{Definition}
\newtheorem{prop}[thm]{Proposition}
\newtheorem{cor}[thm]{Corollary}
\newtheorem{example}[thm]{Example}
\begin{document}

\title[Generalized solutions of PDEs and applications]{Generalized 
	solutions of variational \\ problems and applications}

\author{Vieri Benci}
\author{Lorenzo Luperi Baglini}
\author{Marco Squassina}

\address[M.\ Squassina]{Dipartimento di Matematica e Fisica \newline\indent
	Universit\`a Cattolica del Sacro Cuore \newline\indent
	Via dei Musei 41, I-25121 Brescia, Italy}
\email{marco.squassina@unicatt.it}

\address[V.\ Benci]{Dipartimento di Matematica\newline\indent 
Universit\`a degli Studi di Pisa\newline\indent
Via F. Buonarroti 1/c, 56127 Pisa, Italy\newline\indent
and Centro Linceo interdisciplinare Beniamino Segre\newline\indent 
Palazzo Corsini - Via della Lungara 10, 00165 Roma, Italy}
\email{benci@dma.unipi.it}

\address[L.\ Luperi Baglini]{Faculty of Mathematics\newline\indent
University of Vienna\newline\indent
Oskar-Morgenstern-Platz 1, 1090 Wien, Austria}

\email{lorenzo.luperi.baglini@univie.ac.at}

\subjclass[2010]{03H05, 26E35, 28E05, 46S20}
\keywords{Ultrafunctions, Non Archimedean Mathematics, Non Standard Analysis, Delta function.}

\thanks{M.\ Squassina is a member of the Gruppo Nazionale per l'Analisi Matematica, la Probabilit\`a e le loro Applicazioni (GNAMPA) and of Istituto Nazionale di Alta Matematica (INdAM). L.\ Luperi Baglini has been supported by grants M1876-N35, P26859-N25 and P 30821-N35 of the Austrian Science Fund FWF}

\begin{abstract}
Ultrafunctions are a particular class of generalized functions defined on a hyperreal field $\mathbb{R}^{*}\supset\mathbb{R}$ that allow to solve variational problems with no classical solutions. We recall the construction of ultrafunctions and we study the relationships between these generalized solutions and classical minimizing sequences. Finally, we study some examples to highlight the potential of this approach. 
\end{abstract}

\maketitle

\begin{center}
	\begin{minipage}{9.5cm}
		\small
		\tableofcontents
	\end{minipage}
\end{center}

\medskip

\section{Introduction\label{sec:Intro}}

It is nowadays very well known that, in many circumstances, the needs of a theory require the introduction of generalized functions. Among people working in partial differential equations, the theory of distributions of L.\ Schwartz is the most commonly used, but other notions of generalized functions have been introduced, e.g.\ by J.F.\ Colombeau \cite{col85} and
M.\ Sato \cite{sa59,sa60}. Many notions of generalized functions are based on non-Archimedean mathematics, namely mathematics handling infinite and/or infinitesimal quantities. Such an approach presents several positive features, the main probably being the possibility of treating distributions as non-Archimedean set-theoretical functions (under the limitations imposed by Schwartz' result). This allows to easily introduce nonlinear concepts, such as products, into distribution theory.  Moreover, a theory which includes infinitesimals and infinite quantities makes it possible to easily construct new models, allowing in this way to study several problems which are difficult even to formalize in classical mathematics. This has led to applications in various field, including several topics in analysis, geometry and mathematical physic (see e.g. \cite{Paolo,GKOS} for an overview in the case of Colombeau functions and their recent extension, called generalized smooth functions).

In this paper we deal with ultrafunctions, which are a kind of generalized functions that have been introduced recently in \cite{ultra} and developed in \cite{belu2012,belu2013,milano,algebra,beyond,BBG,burger,topology,gauss}.

Ultrafunctions are a particular case of non-Archimedean generalized functions that are based on the hyperreal
field $\mathbb{R}^{*},$ namely the numerical field on which nonstandard
analysis\footnote{We refer to Keisler \cite{keisler76} for a very clear exposition of nonstandard analysis.} is based. No prior knowledge of nonstandard analysis is requested to read this paper: we will introduce all the nonstandard notions that we need via a new notion of limit, called $\Lambda$-limit (see \cite{topology} for a complete introduction to this notion and its relationships with the usual nonstandard analysis). The main peculiarity of this
notion of limit is that it allows us to make a very limited use of formal logic, in constrast with most usual nonstandard analysis introductions. 

Apart from being framed in a non-Archimedean setting, ultrafunctions have other peculiar properties that will be introduced and used in the following: 

\begin{itemize}

\item every ultrafunction can be splitted uniquely (in a sense that will be precised in Section \ref{splitting}) as the sum of a classical function and a purely non-Archimedean part;
\item ultrafunctions extend distributions, in the sense that every distribution can be identified with an ultrafunction; in particular, this allows to perform nonlinear operations with distribution;
\item although being generalized functions, ultrafunctions shares many properties of $\mathcal{C}^{1}$ functions, like e.g.\ Gauss' divergence theorem.
\end{itemize}

Our goal is to introduce all the aforementioned properties of ultrafunctions so to be able to explain how they can be used to solve certain classical problems that do not have classical solutions; in particular, we will concentrate on singular problems arising in calculus of variations and in relevant applications (see, e.g., \cite{Paolo} and references therein for other approaches to these problems based on different notions of generalized functions).

The paper is organized as follows: in Section \ref{Lambda} we introduce the notion of $\Lambda$-limit, and we explain how to use it to construct all the non-Archimedean tools that are needed in the rest of the paper, in particular how to construct the non-Archimedean field extension $\mathbb{R}^{\ast}$ of $\mathbb{R}$ and what the notion of "hyperfinite" means. In Section \ref{sec:Ultrafunctions} we define ultrafunctions, and we explain how to extend derivatives and integrals to them. All the properties of ultrafunctions needed later on are introduced in this section: we show how to split an ultrafunction as the sum of a standard and a purely non-Archimedean part, how to extend Gauss' divergence theorem and how to identify distributions with certain ultrafunctions. In Section \ref{General theorems} we present the main results of the paper, namely we show that a very large class of classical problems admits a generalized ultrafunction solutions. We study the main properties of these generalized solutions, concentrating in particular on the relationships between ultrafunction solutions and classical minimizing sequences for variational problems. Finally, in Section \ref{applications} we present two examples of applications of our methods: the first is the study of a variational problem related with the sign-perturbation of potentials, the second is a singular variation problem related with sign-changing boundary conditions. 

The first part of this paper contains some overlap with other papers on ultrafunctions, but this fact is necessary to make it self-contained and to make the reader comfortable with it.

\subsection{Notations\label{sec:not}}

If $X$ is a set and $\Omega$\ is a subset of $\mathbb{R}^{N}$,
then 
\begin{itemize}
\item $\mathcal{P}\left(X\right)$ denotes the power set of $X$ and $\mathcal{P}_{{\rm fin}}\left(X\right)$
denotes the family of finite subsets of $X;$ 
\item $\mathfrak{F}\left(X,Y\right)$ denotes the set of all functions from
$X$ to $Y$ and $\mathfrak{F}\left(\Omega\right)=\mathfrak{F}\left(\Omega,\mathbb{R}\right).$ 
\item $\mathscr{\mathcal{C}}\left(\Omega\right)$ denotes the set of continuous
functions defined on $\Omega\subset\mathbb{R}^{N};$ 
\item $\mathcal{C}^{k}\left(\Omega\right)$ denotes the set of functions
defined on $\Omega\subset\mathbb{R}^{N}$ which have continuous derivatives
up to the order $k$ (sometimes we will use the notation $\mathscr{C}^{0}(\Omega)$
instead of $\mathscr{C}(\Omega)$ );
\item $H^{k,p}\left(\Omega\right)$ denotes the usual Sobolev space of functions
defined on $\Omega\subset\mathbb{R}^{N}$ 
\item if $W\left(\Omega\right)$ is any function space, then $W_{c}\left(\Omega\right)$
will denote de function space of functions in $W\left(\Omega\right)$
having compact support; 
\item $\mathcal{C}_{0}\left(\Omega\cup\Xi\right),\,\,\Xi\subseteq\partial\Omega,$
denotes the set of continuous functions in $\mathcal{C}\left(\Omega\cup\Xi\right)\ $
which vanish for $x\in\Xi$
\item $\mathcal{D}\left(\Omega\right)$ denotes the set of the infinitely
differentiable functions with compact support defined on $\Omega\subset\mathbb{R}^{N};\ \mathcal{D}^{\prime}\left(\Omega\right)$
denotes the topological dual of $\mathcal{D}\left(\Omega\right)$,
namely the set of distributions on $\Omega;$ 
\item if $A\subset X$ is a set, then $\chi_{A}$ denotes the characteristic function of $A;$ 
\item $\mathfrak{supp}(f)={\rm supp}^{\ast}(f)$ where ${\rm supp}$ is the usual notion
of support of a function or a distribution;
\item $\mathfrak{mon}(x)=\{y\in\left(\mathbb{R}^{N}\right)^{\ast}:x\sim y\}$
where $x\sim y$ means that $x-y$ is infinitesimal;
\item $\forall^{a.e.}$ $x\in X$ means "for almost every $x\in X";$ 
\item if $a,b\in\mathbb{R}^{*},$ then

\begin{itemize}
\item $\left[a,b\right]_{\mathbb{R}^{*}}=\{x\in\mathbb{R}^{*}:a\leq x\leq b\};$ 
\item $\left(a,b\right)_{\mathbb{R}^{*}}=\{x\in\mathbb{R}^{*}:a<x<b\};$ 
\end{itemize}
\item if $W$ is a generic function space, its topological dual will be
denoted by $W^{\prime}$ and the pairing by $\left\langle \cdot,\cdot\right\rangle _{W}$;
\item if $E$ is any set, then $|E|$ will denote its cardinality.
\end{itemize}

\section{$\Lambda$-theory\label{sec:lt}}\label{Lambda}

In this section we present the basic notions of Non Archimedean Mathematics (sometimes abbreviated as NAM)
and of Nonstandard Analysis (sometimes abbreviated as NSA) following a method inspired by \cite{BDN2003}
(see also \cite{ultra} and \cite{belu2012}).

\subsection{Non Archimedean Fields\label{naf}}

Here, we recall the basic definitions and facts regarding non-Archimedean
fields. In the following, ${\mathbb{K}}$ will denote a totally ordered infinite field.
We recall that such a field contains (a copy of) the rational numbers.
Its elements will be called numbers. 
\begin{defn}
Let $\mathbb{K}$ be an ordered field. Let $\xi\in\mathbb{K}$. We
say that: 
\end{defn}
\begin{itemize}
\item $\xi$ is infinitesimal if, for all positive $n\in\mathbb{N}$, $|\xi|<\frac{1}{n}$; 
\item $\xi$ is finite if there exists $n\in\mathbb{N}$ such as $|\xi|<n$; 
\item $\xi$ is infinite if, for all $n\in\mathbb{N}$, $|\xi|>n$ (equivalently,
if $\xi$ is not finite). 
\end{itemize}
\begin{defn}
An ordered field $\mathbb{K}$ is called Non-Archimedean if it contains
an infinitesimal $\xi\neq0$. 
\end{defn}
It is easily seen that infinitesimal numbers are actually finite, that the inverse
of an infinite number is a nonzero infinitesimal number, and that
the inverse of a nonzero infinitesimal number is infinite. 
\begin{defn}
A superreal field is an ordered field $\mathbb{K}$ that properly
extends $\mathbb{R}$. 
\end{defn}
It is easy to show, due to the completeness of $\mathbb{R}$, that
there are nonzero infinitesimal numbers and infinite numbers in any
superreal field. Infinitesimal numbers can be used to formalize a
new notion of closeness, according to the following

\begin{defn}
\label{def infinite closeness} We say that two numbers $\xi,\zeta\in{\mathbb{K}}$
are infinitely close if $\xi-\zeta$ is infinitesimal. In this case,
we write $\xi\sim\zeta$. 
\end{defn}
Clearly, the relation $\sim$ of infinite closeness is an equivalence
relation and we have the following 

\begin{thm}
If $\mathbb{K}$ is a totally ordered superreal field, every finite number $\xi\in\mathbb{K}$
is infinitely close to a unique real number $r\sim\xi$, called the
\textbf{standard part} of $\xi$. 
\end{thm}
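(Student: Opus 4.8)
The plan is to produce the real number $r$ by a Dedekind-cut argument and then verify both infinitesimal closeness and uniqueness. First I would exploit the hypothesis that $\xi$ is finite: by definition there is some positive $n\in\mathbb{N}$ with $|\xi|<n$, so the set
\[
A=\{q\in\mathbb{R}:q\leq\xi\}
\]
is a nonempty (it contains $-n$) subset of $\mathbb{R}$ that is bounded above (by $n$). Here the inequalities are taken in $\mathbb{K}$, using that $\mathbb{R}\subseteq\mathbb{K}$ as ordered fields. By the completeness of $\mathbb{R}$ — which is the only substantive external input — the real supremum $r=\sup A$ exists, and this $r$ is my candidate for the standard part.

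The heart of the argument is then to show $r\sim\xi$, i.e.\ that $\xi-r$ is infinitesimal; by the definition of infinitesimal it suffices to prove $|\xi-r|<\varepsilon$ for every positive real $\varepsilon$. I would argue by contradiction on the two one-sided estimates separately. If $\xi\geq r+\varepsilon$, then $r+\varepsilon$ is a real number lying in $A$, contradicting that $r$ is an upper bound of $A$; hence $\xi<r+\varepsilon$. Symmetrically, if $\xi\leq r-\varepsilon$, then every $q\in A$ satisfies $q\leq\xi\leq r-\varepsilon$, so $r-\varepsilon$ would be an upper bound for $A$ strictly smaller than $r$, contradicting that $r$ is the \emph{least} upper bound; hence $\xi>r-\varepsilon$. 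Combining the two gives $|\xi-r|<\varepsilon$ for all real $\varepsilon>0$, which is exactly the assertion that $\xi-r$ is infinitesimal.

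For uniqueness I would observe that if two real numbers $r_1,r_2$ both satisfy $r_1\sim\xi$ and $r_2\sim\xi$, then $r_1-r_2=(r_1-\xi)+(\xi-r_2)$ is a sum of two infinitesimals, hence infinitesimal. But $r_1-r_2$ is itself a real number, and the only infinitesimal real number is $0$ (any nonzero real $s$ satisfies $|s|\geq 1/n$ for large enough $n$). Therefore $r_1=r_2$, establishing uniqueness.

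I do not expect a serious obstacle here; the proof is elementary once completeness is invoked. The only point demanding care is the bookkeeping of the cut — making sure the defining inequality of $A$ is the correct one so that both the "least upper bound" and the "upper bound" clauses of $\sup$ can be used, one for each of the two one-sided estimates. A secondary subtlety worth flagging explicitly is that the comparisons defining $A$ take place in $\mathbb{K}$ while the supremum is taken in $\mathbb{R}$; this is harmless because $\mathbb{R}$ sits inside $\mathbb{K}$ as an ordered subfield, so the order on reals agrees with the order induced from $\mathbb{K}$.
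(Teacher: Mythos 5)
Your proof is correct. The paper states this theorem without proof, treating it as a standard fact about superreal fields; your supremum argument (nonemptiness and boundedness of $A$ from finiteness of $\xi$, the two one-sided $\varepsilon$-estimates from the least-upper-bound property, and uniqueness from the observation that $0$ is the only infinitesimal real number) is the canonical argument and every step checks out.
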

Given a finite number $\xi$, we denote its standard part by $st(\xi)$,
and we put $st(\xi)=\pm\infty$ if $\xi\in\mathbb{K}$ is a positive
(negative) infinite number. In Definition \ref{def:St} we will see how the notion of standard part can be generalized to any Hausdorff topological space. 
\begin{defn}
Let $\mathbb{K}$ be a superreal field, and $\xi\in\mathbb{K}$ a
number. The {\em monad} of $\xi$ is the set of all numbers that are infinitely
close to it
\[
\mathfrak{m}\mathfrak{o}\mathfrak{n}(\xi):=\{\zeta\in\mathbb{K}:\xi\sim\zeta\}.
\]

\end{defn}

\subsection{The $\Lambda$-limit\label{subsec:Lambda-limit}}

Let $U$ be an infinite set of cardinality bigger that the continuum
and let $\mathfrak{L}=\mathscr{P}_{{\rm fin}}(U)$ be the family of finite
subsets of $U$.

Notice that $(\mathfrak{L},\subseteq)$ is a directed set. We add
to $\mathfrak{L}$ a \emph{point at infinity} $\Lambda\notin\mathfrak{L}$,
and we define the following family of neighbourhoods of $\Lambda:$
\[
\{\{\Lambda\}\cup Q\mid Q\in\mathcal{U}\},
\]
where $\mathcal{U}$ is a \emph{fine ultrafilter} on $\mathfrak{L}$,
namely a filter such that 
\begin{itemize}
\item for every $A,B\subseteq\mathfrak{L}$, if $A\cup B=\mathfrak{L}$
then $A\in\mathcal{U}$ or $B\in\mathcal{U}$; 
\item for every $\lambda\in\mathfrak{L}$ the set $Q(\lambda):=\{\mu\in\mathfrak{L}\mid\lambda\sqsubseteq\mu\}\in\mathcal{U}$. 
\end{itemize}
We will refer to the elements of $\mathcal{U}$ as qualified sets.
A function $\varphi:\,\mathfrak{L}\rightarrow E$ defined on a directed
set $E$ is called \emph{net} (with values in E). If $\varphi(\lambda)$
is a real net, we have that

\[
\lim_{\lambda\rightarrow\Lambda}\varphi(\lambda)=L
\]
if and only if\medskip{}
\[
\forall\varepsilon>0,\,\exists Q\text{\ensuremath{\in}}\mathcal{U}:
\,\,\,\,\forall\lambda\text{\ensuremath{\in}}Q,\,|\varphi(\lambda)-L|<\varepsilon.
\]

As usual, if a property $P(\lambda)$ is satisfied by any $\lambda$
in a neighbourhood of $\Lambda$ we will say that it is \textbf{\emph{eventually}}
satisfied. 

\begin{prop}
\label{nino}If the net $\varphi(\lambda)$ takes values in a compact
set $K$, then it is a converging net. 
\end{prop}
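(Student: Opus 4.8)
The plan is to establish existence of a $\Lambda$-limit by exploiting the defining property of the fine ultrafilter $\mathcal{U}$ together with the compactness of $K$; this is the classical construction of the ultrafilter limit of a map into a compact space. First I would reformulate convergence in the language of qualified sets: a point $L$ is the $\Lambda$-limit of $\varphi$ if and only if, for every open neighbourhood $V$ of $L$, the preimage $\varphi^{-1}(V)=\{\lambda\in\mathfrak{L}:\varphi(\lambda)\in V\}$ is qualified, i.e.\ belongs to $\mathcal{U}$. When $K\subseteq\mathbb{R}$ this is exactly the $\varepsilon$-formulation recalled above, with $V=(L-\varepsilon,L+\varepsilon)$; the equivalence uses only that $\mathcal{U}$ is a filter, so that ``$\varphi^{-1}(V)$ contains a qualified set'' and ``$\varphi^{-1}(V)\in\mathcal{U}$'' mean the same thing.

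Next I would record the dichotomy furnished by the ultrafilter axiom: for every $A\subseteq\mathfrak{L}$, since $A\cup(\mathfrak{L}\setminus A)=\mathfrak{L}$, at least one of $A$ and its complement lies in $\mathcal{U}$; moreover they cannot both lie in $\mathcal{U}$, since otherwise their intersection $\emptyset$ would be qualified, contradicting that a filter is proper. Hence $A\notin\mathcal{U}$ forces $\mathfrak{L}\setminus A\in\mathcal{U}$.

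The core of the argument is then a proof by contradiction. Assume that no $x\in K$ is a $\Lambda$-limit of $\varphi$. By the reformulation above, each $x\in K$ then admits an open neighbourhood $V_x$ with $\varphi^{-1}(V_x)\notin\mathcal{U}$, so that $\mathfrak{L}\setminus\varphi^{-1}(V_x)\in\mathcal{U}$ by the dichotomy. The family $\{V_x\}_{x\in K}$ is an open cover of $K$, so compactness yields finitely many points $x_1,\dots,x_n$ with $K\subseteq V_{x_1}\cup\cdots\cup V_{x_n}$. Because $\varphi$ takes all its values in $K$, this gives $\bigcup_{i=1}^{n}\varphi^{-1}(V_{x_i})=\mathfrak{L}$, equivalently $\bigcap_{i=1}^{n}\bigl(\mathfrak{L}\setminus\varphi^{-1}(V_{x_i})\bigr)=\emptyset$. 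But a filter is closed under finite intersections, so the left-hand side is qualified; thus $\emptyset\in\mathcal{U}$, the desired contradiction. Therefore some $L\in K$ is a $\Lambda$-limit, and $\varphi$ converges.

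I expect the only delicate point to be the bookkeeping in the last two steps: correctly turning ``$L$ is not a limit'' into the existence of a single bad neighbourhood $V_x$ at each point (this is exactly where the ultrafilter property, rather than a mere filter, is essential), and then combining the finitely many complements through the finite-intersection property of $\mathcal{U}$ to reach $\emptyset\in\mathcal{U}$. For the special case $K\subseteq\mathbb{R}$ one could bypass abstract covers and instead run a bisection: writing $K\subseteq[a,b]$ and repeatedly halving, the ultrafilter property guarantees that at each stage at least one half has a qualified preimage, producing nested intervals of vanishing length whose common point is the limit. I would nonetheless keep the compactness version, since it makes no use of the order structure of $\mathbb{R}$ and applies verbatim to nets with values in any compact Hausdorff space.
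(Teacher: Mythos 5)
Your proof is correct, and it takes a genuinely different route from the paper's. The paper argues by first \emph{assuming} that $\varphi$ has a subnet converging to some $L\in\mathbb{R}$ (implicitly appealing to compactness of $K$ for this), and then showing that $Q_{\varepsilon}=\{\lambda:|\varphi(\lambda)-L|<\varepsilon\}$ must be qualified, since otherwise its complement would be qualified and $\varphi$ would be bounded away from $L$ there, contradicting the existence of the convergent subnet. You instead prove existence of the limit directly: you characterize convergence as ``$\varphi^{-1}(V)\in\mathcal{U}$ for every neighbourhood $V$ of $L$,'' and run the classical open-cover argument, using the ultrafilter dichotomy to place the complement of each bad preimage in $\mathcal{U}$ and the finite-intersection property to derive $\emptyset\in\mathcal{U}$. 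Your version is more self-contained and arguably more rigorous: the paper never justifies the initial extraction of a convergent subnet (for a general net indexed by $\mathfrak{L}$ this is itself nontrivial, and is usually proved by essentially the ultrafilter argument you give), and its final contradiction step quietly assumes the subnet eventually lives in the qualified complement $N$. Your argument also generalizes verbatim to any compact Hausdorff target, as you note, whereas the paper's is tied to $\mathbb{R}$. The only bookkeeping worth making explicit is that properness of the filter ($\emptyset\notin\mathcal{U}$) is what makes the dichotomy exclusive, but this is implicit in the paper's definition of a fine ultrafilter.
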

\begin{proof}
Suppose that the net $\varphi(\lambda)$ has a converging subnet to
$L\in\mathbb{R}$. We fix $\varepsilon>0$ arbitrarily and we have
to prove that $Q_{\varepsilon}\in\mathcal{U}$ where
\[
Q_{\varepsilon}=\left\{ \lambda\in\mathfrak{L}\ |\ \left\vert \varphi(\lambda)-L\right\vert <\varepsilon\right\} .
\]
We argue indirectly and we assume that 
\[
Q_{\varepsilon}\notin\mathcal{U}.
\]
 Then, by the definition of ultrafilter, $N=\mathfrak{L}\backslash Q_{\varepsilon}\in\mathcal{U}$
and hence
\[
\forall\lambda\in N,\ \left\vert \varphi(\lambda)-L\right\vert \geq\varepsilon.
\]
This contradict the fact that $\varphi_{\lambda}$ has a subnet which
converges to $L.$
\end{proof}

\begin{prop}
\label{prop:Ass}Assume that $\varphi:\,\mathfrak{L}\rightarrow E$
where $E$ is a first countable topological space; then if
\end{prop}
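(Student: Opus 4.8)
The plan is to read the truncated conclusion as the natural first-countable counterpart of Proposition~\ref{nino}: \emph{if $\lim_{\lambda\to\Lambda}\varphi(\lambda)=L$, then there is a sequence $(\lambda_{n})_{n\in\mathbb{N}}$ in $\mathfrak{L}$ such that $\varphi(\lambda_{n})\to L$ in $E$.} The point is that first countability of the \emph{target} space $E$ is exactly what converts the neighbourhood-based definition of the $\Lambda$-limit into a statement about ordinary sequences, so that the $\Lambda$-limit is witnessed classically, in the same spirit as Proposition~\ref{nino}.

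First I would fix a countable neighbourhood base $(U_{n})_{n\in\mathbb{N}}$ at $L$ and pass to the nested base $V_{n}:=U_{1}\cap\dots\cap U_{n}$, which satisfies $V_{1}\supseteq V_{2}\supseteq\cdots$ and is still a neighbourhood base at $L$ (every neighbourhood of $L$ contains some $V_{n}$). By the definition of $\Lambda$-convergence, for each $n$ the set
\[
A_{n}:=\{\lambda\in\mathfrak{L}\ \mid\ \varphi(\lambda)\in V_{n}\}
\]
belongs to $\mathcal{U}$; in particular each $A_{n}$ is nonempty, since a filter never contains $\emptyset$.

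Then I would simply choose $\lambda_{n}\in A_{n}$ for every $n$. For an arbitrary neighbourhood $W$ of $L$ pick $n_{0}$ with $V_{n_{0}}\subseteq W$; the nesting of the $V_{n}$ gives $\varphi(\lambda_{n})\in V_{n}\subseteq V_{n_{0}}\subseteq W$ for all $n\geq n_{0}$, so $\varphi(\lambda_{n})\to L$, as required. If one prefers the indices to increase, it suffices to replace $A_{n}$ by $A_{n}\cap Q(\lambda_{n-1})$, which is again in $\mathcal{U}$ because $\mathcal{U}$ is a filter and $Q(\lambda_{n-1})\in\mathcal{U}$ by fineness, hence nonempty.

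I do not expect a genuine computational obstacle; the construction is elementary once the nested base is in place. The one point I would be careful about — and the place where a naive argument goes wrong — is the temptation to assert $\lambda_{n}\to\Lambda$: since $U$ is uncountable, no sequence of finite subsets of $U$ can eventually lie inside every qualified set $Q\in\mathcal{U}$, so the indices cannot be made to converge to $\Lambda$. First countability is available on $E$, not on $\mathfrak{L}\cup\{\Lambda\}$, and the conclusion must therefore be phrased purely as sequential convergence of the values $\varphi(\lambda_{n})$. For the same reason the converse implication fails in general: an oscillating net admits convergent subsequences whose limits differ from the ultrafilter-selected value, so only the direction from the $\Lambda$-limit to a realizing sequence is provable.
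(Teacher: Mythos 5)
Your proof is correct and is essentially the paper's own argument: the paper picks $\lambda_{n}$ from $J_{n}:=\bigcap_{j\leq n}\{\lambda\mid\varphi(\lambda)\in A_{j}\}$, which is exactly your $A_{n}$ built from the nested base $V_{n}=U_{1}\cap\dots\cap U_{n}$, and both conclude by the same nesting argument. Your closing caveats (no claim that $\lambda_{n}\to\Lambda$, and only one direction holds) are accurate but not needed for the statement.
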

\[
\lim_{\lambda\rightarrow\Lambda}\varphi(\lambda)=x_{0},
\]
\emph{there exists a sequence $\left\{ \lambda_{n}\right\} $ in}
$\mathfrak{L}$\emph{ such that
\[
\lim_{n\rightarrow\infty}\varphi(\lambda_{n})=x_{0}
\]
We refer to the sequence $\varphi_{n}:=\varphi(\lambda_{n})$ as a
subnet of $\varphi(\lambda)$.}

\begin{proof}
Let $\{A_{n}\mid n\in\mathbb{N}\}$ be a countable basis of open neighbourhoods of $x_{0}$. For every $n\in\mathbb{N}$ the set 
\[I_{n}:=\{\lambda\in\mathfrak{L}\mid \varphi(\lambda)\in A_{n}\}\]
is qualified. Hence $J_{n}:=\bigcap_{j\leq n} I_{j}\neq\emptyset$. Let $\lambda_{n}\in J_{n}$. Then the sequence $\{\lambda_{n}\}_{n\in\mathbb{N}}$ has trivially the desired property: for every $n\in\mathbb{N}$, for every $m\geq n$ we have that $\varphi\left(\lambda_{m}\right)\in A_{n}$. 

\end{proof}

\begin{example}
Let $\varphi:\mathfrak{\,L}\rightarrow V$ be a net with values in a
bounded subset of a reflexive Banach space equipped with the weak topology;
then 
\[
v:=\lim_{\lambda\rightarrow\Lambda}\varphi(\lambda),
\]
is uniquely defined and there exists a sequence $n\mapsto\varphi(\lambda_{n})$
which converges to $v$. 
\end{example}

\begin{defn}
The set of the hyperreal numbers $\mathbb{R}^{*}\supset\mathbb{R}$
is a set equipped with a topology $\tau$ such that
\begin{itemize}
\item every net $\varphi:\,\mathfrak{L}\rightarrow\mathbb{R}$ has a unique
limit in $\mathbb{R}^{*}$ if $\mathfrak{L}$ and $\mathbb{R}^{*}$
are equipped with the $\Lambda$ and the $\tau$ topology respectively;
\item $\mathbb{R}^{*}$ is the closure of $\mathbb{R}$ with respect to
the topology $\tau$;
\item $\tau$ is the coarsest topology which satisfies the first property. 
\end{itemize}
\end{defn}

The existence of such $\mathbb{R}^{*}$ is a well known fact in NSA. The limit $\xi\in\mathbb{R}^{*}$ of a net $\varphi:\,\mathfrak{L}\rightarrow\mathbb{R}$ with respect to the $\tau$ topology, following \cite{ultra}, is called the $\Lambda$-limit of $\varphi$ and the following notation will be used:
\begin{equation}
\xi=\lim_{\lambda\uparrow\Lambda}\varphi(\lambda);\label{eq:lambdauno}
\end{equation}
namely, we shall use the up-arrow ``$\uparrow$'' to remind that
the target space is equipped with the topology $\tau$.
Given 
\[
\xi:=\lim_{\lambda\uparrow\Lambda}\varphi(\lambda),
\qquad
\eta:=\lim_{\lambda\uparrow\Lambda}\psi(\lambda),
\]
we set
\begin{equation}
\xi+\eta:=\lim_{\lambda\uparrow\Lambda}\left(\varphi(\lambda)+\psi(\lambda)\right),\label{eq:su}
\end{equation}
and 
\begin{equation}
\xi\cdot\eta:=\lim_{\lambda\uparrow\Lambda}\left(\varphi(\lambda)\cdot\psi(\lambda)\right).\label{eq:pr}
\end{equation}

Then the following well known theorem holds:
\begin{thm}
The definitions (\ref{eq:su}) and (\ref{eq:pr}) are well posed and
$\mathbb{R}^{*}$, equipped with these operations, is a non-Archimedean
field.
\end{thm}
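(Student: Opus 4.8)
\emph{Strategy.} The plan is to reduce the entire statement to one combinatorial fact about the fine ultrafilter $\mathcal{U}$. First I would prove the \emph{bridge lemma}: for real nets $\varphi,\psi:\mathfrak{L}\to\mathbb{R}$ one has $\lim_{\lambda\uparrow\Lambda}\varphi(\lambda)=\lim_{\lambda\uparrow\Lambda}\psi(\lambda)$ if and only if the agreement set $A_{\varphi,\psi}:=\{\lambda\in\mathfrak{L}\mid\varphi(\lambda)=\psi(\lambda)\}$ is qualified. The easy direction is that $A_{\varphi,\psi}\in\mathcal{U}$ forces equal limits: then $\varphi$ and $\psi$ coincide on the $\Lambda$-neighbourhood $\{\Lambda\}\cup A_{\varphi,\psi}$, hence share the same germ at $\Lambda$, and since the $\tau$-limit of a net is determined by its germ at $\Lambda$, their $\Lambda$-limits agree. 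The converse is the crux and is discussed at the end. I would also record that, $\mathbb{R}^{*}$ being the $\tau$-closure of $\mathbb{R}$, every element of $\mathbb{R}^{*}$ is a $\Lambda$-limit of some real net, so that (\ref{eq:su}) and (\ref{eq:pr}) are defined on all of $\mathbb{R}^{*}$.

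\emph{Well-posedness.} Suppose $\xi=\lim_{\lambda\uparrow\Lambda}\varphi=\lim_{\lambda\uparrow\Lambda}\varphi'$ and $\eta=\lim_{\lambda\uparrow\Lambda}\psi=\lim_{\lambda\uparrow\Lambda}\psi'$. By the bridge lemma $A_{\varphi,\varphi'},A_{\psi,\psi'}\in\mathcal{U}$, so their intersection $Q:=A_{\varphi,\varphi'}\cap A_{\psi,\psi'}\in\mathcal{U}$ because $\mathcal{U}$ is a filter. On $Q$ we have $\varphi(\lambda)+\psi(\lambda)=\varphi'(\lambda)+\psi'(\lambda)$ and $\varphi(\lambda)\psi(\lambda)=\varphi'(\lambda)\psi'(\lambda)$; thus $Q$ is contained in the respective agreement sets, which are therefore qualified, and the bridge lemma yields $\lim_{\lambda\uparrow\Lambda}(\varphi+\psi)=\lim_{\lambda\uparrow\Lambda}(\varphi'+\psi')$ and $\lim_{\lambda\uparrow\Lambda}(\varphi\psi)=\lim_{\lambda\uparrow\Lambda}(\varphi'\psi')$. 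This is precisely the well-posedness of (\ref{eq:su}) and (\ref{eq:pr}).

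\emph{Field and order axioms.} Every algebraic identity --- commutativity, associativity, distributivity, the units $0=\lim_{\lambda\uparrow\Lambda}0$ and $1=\lim_{\lambda\uparrow\Lambda}1$, and additive inverses $-\xi=\lim_{\lambda\uparrow\Lambda}(-\varphi)$ --- holds pointwise for the representatives and hence transfers by the bridge lemma. The only non-pointwise step is multiplicative inverses: if $\xi\neq0$ is represented by $\varphi$, then $A_{\varphi,0}\notin\mathcal{U}$, so $Z:=\{\lambda\mid\varphi(\lambda)\neq0\}\in\mathcal{U}$; setting $\psi(\lambda)=\varphi(\lambda)^{-1}$ on $Z$ and $\psi(\lambda)=0$ elsewhere gives $\varphi\psi=1$ on the qualified set $Z$, whence $\xi\cdot\lim_{\lambda\uparrow\Lambda}\psi=1$. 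The order is defined by $\xi\le\eta$ iff $\{\lambda\mid\varphi(\lambda)\le\psi(\lambda)\}\in\mathcal{U}$; well-definedness uses the same intersection argument, totality follows from the ultrafilter dichotomy applied to the partition of $\mathfrak{L}$ into $\{\varphi<\psi\}$, $\{\varphi=\psi\}$, $\{\varphi>\psi\}$ (exactly one block is qualified), and compatibility with $+,\cdot$ is again pointwise. A field being non-Archimedean as soon as it contains a nonzero infinitesimal, I would exhibit one via $\varphi(\lambda)=1/(|\lambda|+1)$: for each $n$, picking $\mu\in\mathfrak{L}$ with $|\mu|=n$ gives $Q(\mu)\subseteq\{\lambda\mid\varphi(\lambda)<1/n\}$, and $Q(\mu)\in\mathcal{U}$ by fineness, so $0<\lim_{\lambda\uparrow\Lambda}\varphi<1/n$ for all $n$.

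\emph{Main obstacle.} The hard part is the converse of the bridge lemma: that equal $\Lambda$-limits force agreement on a qualified set. Everything above is pointwise algebra combined with the (ultra)filter properties, but this converse is exactly where one must use that $\tau$ is the \emph{coarsest} topology making every net uniquely convergent --- if $\varphi,\varphi'$ differed on a qualified set yet had the same $\tau$-limit, one could refine $\tau$ to separate their germs without spoiling uniqueness of limits, contradicting minimality. In practice the cleanest route is to invoke the standard ultrapower realization $\mathbb{R}^{*}\cong\mathbb{R}^{\mathfrak{L}}/\mathcal{U}$, in which $\lim_{\lambda\uparrow\Lambda}\varphi=[\varphi]_{\mathcal{U}}$ and the bridge lemma holds by definition of the quotient; identifying the $\tau$-limit with this class is the single step that genuinely leans on the existence result quoted just before the theorem.
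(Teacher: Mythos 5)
The paper gives no proof of this theorem at all: it is stated as ``well known,'' and the only justification offered is the remark immediately following it, which identifies $\mathbb{R}^{*}$ with the ultrapower $\mathbb{R}^{\mathfrak{L}}/\mathfrak{I}$, where $\mathfrak{I}$ is the (maximal) ideal of nets vanishing on a qualified set. Your proposal is therefore a genuine fleshing-out rather than a variant of an existing argument, and its overall architecture is sound and standard: once the bridge lemma is available, well-posedness, the field axioms, the construction of inverses on the qualified set $Z$, the order via the three-way ultrafilter dichotomy, and the infinitesimal $\lim_{\lambda\uparrow\Lambda}1/(|\lambda|+1)$ (using fineness through the sets $Q(\mu)$) are all correct and complete. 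You also correctly locate the crux in the converse of the bridge lemma, and your fallback --- identifying $\lim_{\lambda\uparrow\Lambda}\varphi$ with the class $[\varphi]_{\mathcal{U}}$ in the ultrapower --- is exactly the paper's own stance, so the argument closes.

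One caveat: the heuristic you offer for that converse before falling back on the ultrapower (``one could refine $\tau$ to separate their germs without spoiling uniqueness of limits, contradicting minimality'') does not work as stated. Refining a topology can destroy the \emph{existence} of limits, not just their uniqueness, so producing a strictly finer topology with the same convergence property is not an automatic contradiction with $\tau$ being the coarsest such topology; moreover, the direction of the minimality constraint cuts the wrong way for the separation you want. Since you explicitly discard this heuristic in favour of the ultrapower realization, the proof stands, but you should either delete that sentence or replace it with the honest statement that the converse of the bridge lemma is part of what the existence result for $\mathbb{R}^{*}$ (equivalently, the isomorphism $\mathbb{R}^{*}\cong\mathbb{R}^{\mathfrak{L}}/\mathfrak{I}$ recorded in the paper's remark) delivers.
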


\begin{rem}\rm
We observe that the field of hyperreal numbers is defined as a sort
of completion of real numbers. In fact $\mathbb{R}^{*}$ is isomorphic
to the ultrapower
\[
\mathbb{R}\mathfrak{^{L}/\mathfrak{I}}
\]
where
\[
\mathfrak{I}=\left\{ \varphi:\mathfrak{L}\rightarrow\mathbb{R}\,|\,\varphi(\lambda)=0 \ \text{eventually}\right\} 
\]
The isomorphism resembles the classical one between real numbers
and equivalence classes of Cauchy sequences: this method is surely
known to the reader for the construction of the real numbers starting
from the rationals. 
\end{rem}

\subsection{Natural extension of sets and functions}

To develop applications, we need to extend the notion of $\Lambda$-limit
to sets and functions (but also to differential and
integral operators). This will allow to enlarge the notions of variational problem and of variational
solution. 

$\Lambda$-limits of bounded nets of mathematical
objects in $V_{\infty}(\mathbb{R})$ can be defined by induction (a net $\varphi:\,\mathfrak{L}\rightarrow V_{\infty}(\mathbb{R})$
is called bounded if there exists $n\in\mathbb{N}$ such that $\forall\lambda\in\mathcal{\mathfrak{L}},\varphi(\lambda)\in V_{n}(\mathbb{R})$).
To do this, consider a net
\begin{equation}
\varphi:\mathcal{\mathfrak{L}}\rightarrow V_{n}(\mathbb{R}).\label{net}
\end{equation}

\begin{defn}
For $n=0,$ $\lim_{\lambda\uparrow\Lambda}\varphi(\lambda)$ is defined
by (\ref{eq:lambdauno}); so by induction we may assume that the limit
is defined for $n-1$ and we define it for the net (\ref{net}) as
follows:
\[
\lim_{\lambda\uparrow\mathcal{\textrm{\ensuremath{\Lambda}}}}\varphi(\lambda)=\left\{ \lim_{\lambda\uparrow\mathcal{\textrm{\ensuremath{\Lambda}}}}\psi(\lambda)\ |\ \psi:\mathfrak{\mathcal{\mathfrak{L}}}\rightarrow V_{n-1}(\mathbb{R}),\ \forall\lambda\in\mathcal{\mathfrak{L}},\ \psi(\lambda)\in\varphi(\lambda)\right\} .
\]
A mathematical entity (number, set, function or relation) which is
the $\Lambda$-limit of a net is called \textbf{internal}. 
\end{defn}

\begin{defn}
If $\forall\lambda\in\mathfrak{L,}$ $E_{\lambda}=E\in V_{\infty}(\mathbb{R}),$
we set $\lim_{\lambda\uparrow\Lambda}\ E_{\lambda}=E^{\ast},\ $namely
\[
E^{\ast}:=\left\{ \lim_{\lambda\uparrow\Lambda}\psi(\lambda)\ |\ \psi(\lambda)\in E\right\} .
\]
$E^{\ast}$ is called the \textbf{natural extension }of $E.$ 
\end{defn}
Notice that, while the $\Lambda$-limit\ of a constant sequence of
numbers gives this number itself, a constant sequence of sets gives
a larger set, namely $E^{\ast}$. In general, the inclusion $E\subseteq E^{\ast}$
is proper.

Given any set $E,$ we can associate to it two sets: its natural extension
$E^{\ast}$ and the set $E^{\sigma},$ where
\begin{equation}
E^{\sigma}:=\left\{ X^{\ast}\ |\ X\in E\right\}.
\label{sigmaS}
\end{equation}

Clearly $E^{\sigma}$ is a copy of $E;$ however it might be different
as set since, in general, $X^{\ast}\neq X.$ 

\begin{rem}\rm
If $\varphi:\mathfrak{\,L}\rightarrow X$ is a net with value in a
topological space we have the usual limit 
\[
\lim_{\lambda\rightarrow\Lambda}\varphi(\lambda),
\]
which, by Proposition \ref{nino}, always exists in the Alexandrov
compactification $X\cup\left\{ \infty\right\}$. Moreover we have
the $\Lambda$-limit, that always exists and it is en element of $X^{*}$.
In addition, the $\Lambda$-limit of a net is in $X^{\sigma}$ if
and only if $\varphi$ is eventually constant. If $X=\mathbb{R}$
and both limits exist, then 

\begin{equation}
\lim_{\lambda\rightarrow\Lambda}\varphi(\lambda)=st\left(\lim_{\lambda\uparrow\Lambda}\varphi(\lambda)\right).\label{eq:teresa}
\end{equation}
\end{rem}
The above equation suggests the following definition.
\begin{defn}
\label{def:St}If $X$ is topological space equipped with a Hausdorff
topology, and $\xi\in X^{*}$ we set
\[
St_{X}\left(\xi\right)=\lim_{\lambda\rightarrow\Lambda}\varphi(\lambda),
\]
if there is a net $\varphi:\mathfrak{\,L}\rightarrow X$ converging
in the topology of $X$ and such that
\[
\xi=\lim_{\lambda\uparrow\Lambda}\varphi(\lambda),
\]
and $St_{X}\left(\xi\right)=\infty$ otherwise.
\end{defn}

\noindent
By the above definition we have that 
\[
\lim_{\lambda\rightarrow\Lambda}\varphi(\lambda)=St_{X}\left(\lim_{\lambda\uparrow\Lambda}\varphi(\lambda)\right).
\]

\begin{defn}
Let 
\[
f_{\lambda}:\ E_{\lambda}\rightarrow\mathbb{R},\ \ \lambda\in\mathfrak{L},
\]
be a net of functions. We define a function
\[
f:\left(\lim_{\lambda\uparrow\Lambda}\ E_{\lambda}\right)\rightarrow\mathbb{R}^{*}
\]
as follows: for every $\xi\in\left(\lim_{\lambda\uparrow\Lambda}\ E_{\lambda}\right)$
we set
\[
f\left(\xi\right):=\lim_{\lambda\uparrow\Lambda}\ f_{\lambda}\left(\psi(\lambda)\right),
\]
where $\psi(\lambda)$ is a net of numbers such that 
\[
\psi(\lambda)\in E_{\lambda}\ \ \text{and}\ \ \lim_{\lambda\uparrow\Lambda}\psi(\lambda)=\xi.
\]
A function which is a $\Lambda$-\textit{limit\ is called }\textbf{\textit{internal}}\textit{.}
In particular if, $\forall\lambda\in\mathfrak{L,}$ 
\[
f_{\lambda}=f,\ \ \ \ f:\ E\rightarrow\mathbb{R},
\]
we set 
\[
f^{\ast}=\lim_{\lambda\uparrow\Lambda}\ f_{\lambda}.
\]
$f^{\ast}:E^{\ast}\rightarrow\mathbb{R}^{*}$ is called the \textbf{natural
extension }of $f.$ If we identify $f$ with its graph, then $f^{\ast}$
is the graph of its natural extension.
\end{defn}

\subsection{Hyperfinite sets and hyperfinite sums\label{HE}}
\begin{defn}
An internal set is called \textbf{hyperfinite} if it is the $\Lambda$-limit
of a net $\varphi:\mathfrak{L}\rightarrow\mathfrak{F}$ where $\mathfrak{F}$
is a family of finite sets. 
\end{defn}
For example, if $E\in V_{\infty}(\mathbb{R})$, the set
\[
\widetilde{E}=\lim_{\lambda\uparrow\Lambda}\left(\lambda\cap E\right)
\]
is hyperfinite. Notice that
\[
E^{\sigma}\subset\widetilde{E}\subset E^{*}
\]
so, we can say that every standard set is contained in a hyperfinite set. 

It is possible to add the elements of a hyperfinite set of numbers
(or vectors) as follows: let
\[
A:=\ \lim_{\lambda\uparrow\Lambda}A_{\lambda},
\]
be a hyperfinite set of numbers (or vectors); then the hyperfinite
sum of the elements of $A$ is defined in the following way: 
\[
\sum_{a\in A}a=\ \lim_{\lambda\uparrow\Lambda}\sum_{a\in A_{\lambda}}a.
\]
In particular, if $A_{\lambda}=\left\{ a_{1}(\lambda),...,a_{\beta(\lambda)}(\lambda)\right\} \ $with\ $\beta(\lambda)\in\mathbb{N},\ $then
setting 
\[
\beta=\ \lim_{\lambda\uparrow\Lambda}\ \beta(\lambda)\in\mathbb{N}^{\ast},
\]
we use the notation
\[
\sum_{j=1}^{\beta}a_{j}=\ \lim_{\lambda\uparrow\Lambda}\sum_{j=1}^{\beta(\lambda)}a_{j}(\lambda).
\]

\section{Ultrafunctions}\label{sec:Ultrafunctions}

\subsection{Definition of ultrafunctions\label{subsec:Cacc} }

We start by introducing the notion of hyperfinite grid:

\begin{defn}
A hyperfinite set $\Gamma$ such that $\mathbb{R}^{N}\subset\Gamma\subset\left(\mathbb{R}^{N}\right)^{*}$
is called hyperfinite grid.\end{defn}

From now on we assume that $\Gamma$ has been fixed once forever. Notice that, by definition, $\mathbb{R}^{N}\subseteq \Gamma$, and the following two simple (but useful) properties of $\Gamma$ can be easily proven via $\Lambda$-limits:

\begin{itemize}
\item for every $x\in\mathbb{R}^{N}$ there exists $y\in\Gamma\cap \mathfrak{mon}(x)$ so that $x\neq r$;
\item there exists a hyperreal number $\rho\sim 0,\rho>0$ such that $d(x,y)\geq \rho$ for every $x,y\in\Gamma, x\neq y$.
\end{itemize}

\begin{defn}
A space of grid functions is a family $\mathfrak{G}(\mathbb{R}^{N})$
of internal functions 
\[
u\,\,:\,\,\Gamma\rightarrow\mathbb{R}^{\ast}
\]
defined on a hyperfinite grid $\Gamma$. If $E\subset\mathbb{R}^{N}$,
then $\mathfrak{G}(E)$ will denote the restriction of the grid functions
to the set $E^{*}\cap\Gamma$.
\end{defn}
Let $E$ be any set in $\mathbb{R}^{N}$. To every internal function
$u\in\mathfrak{F}(E)^{*}$, it is possible to associate a grid function
by the ``restriction'' map
\begin{equation}
^{\circ}\,:\,\mathfrak{F}(E)^{*}\rightarrow\mathfrak{G}(E)\label{eq:tondino}
\end{equation}
defined as follows:
\[
\forall x\in E^{*}\cap\Gamma,\,\,u^{\circ}(x):=u^{*}(x);
\]
moreover, if $f\in\mathfrak{F}(E)$, for short, we use the notation
\begin{equation}
f^{\circ}(x):=\left(f^{*}\right)^{\circ}(x).\label{eq:lina}
\end{equation}
So every function $f\in\mathfrak{F}(E)$, can be uniquely extended
to a grid function $f^{\circ}\in\mathfrak{G}(E)$. 

In many problem we have to deal with functions defined almost everywhere
in $\Omega$, such as $1/|x|$. Thus it is useful to give a ``rule''
which allows to define a grid function for every $x\in\Gamma$:
\begin{defn}
\label{def:lina}If a function $f$ is defined on a set $E\subset\mathbb{R}^{N}$,
we put
\[
f^{\circ}(x)=\sum_{a\in\Gamma\cap E^{*}}f^{*}(a)\sigma_{a}(x),
\]
where $\forall a\in\Gamma$ the grid fuction $\sigma_{a}$ is defined
as follows: $\sigma_{a}(x):=\delta_{ax}.$ 
\end{defn}
If $E\subset\mathbb{R}^{N}$ is a measurable set, we define the ``density
function'' of $E$ as follows:
\begin{equation}
\theta_{E}(x)=st\left(\frac{m(B_{\eta}(x)\cap E^{*})}{m(B_{\eta}(x))}\right),\label{eq:estate}
\end{equation}
where $\eta$ is a fixed infinitesimal and $m$ is the Lebesgue measure.
Clearly $\theta_{E}(x)$ is a function whose value is 1 in $int(E)$
and 0 in $\mathbb{R}^{N}\setminus\overline{E}$; moreover, it is easy
to prove that $\theta_{E}(x)$ is a measurable function and we have
that
\[
\int\theta_{E}(x)dx=m(E)
\]
whenever $m(E)<\infty$; also, if $E$ is a bounded open set with
smooth boundary, we have that $\theta_{E}(x)=1/2$ for every $x\in\partial E.$ 

\vskip2pt
Now let $V(\mathbb{R}^{N})$ be a vector space such that $\mathscr{D}(\mathbb{R}^{N})\subset V(\mathbb{R}^{N})\subset\mathscr{L}^{1}(\mathbb{R}^{N})$.

\begin{defn}
A space of ultrafunctions $V^{\circ}(\mathbb{R}^{N})$ modelled
over the space $V(\mathbb{R}^{N})$ is a space of grid functions such
that there exists a vector space $V_{\Lambda}(\mathbb{R}^{N})\subset V^{*}(\mathbb{R}^{N})$
such that the map\footnote{$V^{*}(E)$ is a shorthand notation for $\left[V(E)\right]^{*}$ }
\[
^{\circ}:\ V_{\Lambda}(\mathbb{R}^{N})\rightarrow V^{*}(\mathbb{R}^{N})
\]
is a $\mathbb{R}^{*}$-linear isomophism. 
From now on, we assume that $V(\mathbb{R}^{N})$ satisfies the following
assumption: if $\Omega$ is a bounded open set such that $m_{N-1}(\partial\Omega)<\infty$
and $f\in C^{0}(\mathbb{R}^{N})$, then
\[
f\theta_{\Omega}\in V(\mathbb{R}^{N}).
\]
\end{defn}

Next we want to equip $V^{\circ}(\mathbb{R}^{N})$ with
the two main operations of calculus: the integral and the derivative.

\begin{defn}\label{integration}
\label{def:scorre}The \textbf{\emph{pointwise integral}} 
\[
\sqint\,\,:\ V^{\circ}(\mathbb{R}^{N})\rightarrow\mathbb{R}^{*}
\]
is a linear functional which satisfies the following properties:

\begin{enumerate}
\item $\forall u\in V_{\Lambda}(\mathbb{R}^{N})$ 
\begin{equation}
\sqint u^{\circ}(x)\,dx=\int u(x)dx;\label{eq:bimba}
\end{equation}
\item there exists a ultrafunction $d\,:\,\Gamma\rightarrow\mathbb{R}^{*}$
such that $\forall x\in\Gamma,\,d(x)>0$ and $\forall u\in V^{\circ}(\mathbb{R}^{N})$,
\[
\sqint u(x)\,dx=\sum_{a\in\Gamma}u(a)d(a).
\]
\end{enumerate}

\end{defn}

If $E\subset\mathbb{R}^{N}$ is any set, we use the obvious notation
\[
\sqint_{E}u(x)\,dx:=\sum_{a\in\Gamma\cap E^{*}}u(a)d(a)
\]

Few words to discuss the above definition. Point 2 says that the poinwise
integral is nothing else but a hyperfinite sum. Since $d(x)>0$ every
non-null positive ultrafunction has a strictly positive integral.
In particular if we denote by $\sigma_{a}(x)$ the ultrafunctions whose
value is $1$ for $x=a$ and $0$ otherwise, we have that
\[
\sqint\sigma_{a}(x)\,dx=d(a).
\]
The pointwise integral allows us to define the following scalar product:
\begin{equation}
\sqint u(x)v(x)dx=\sum_{a\in\Gamma}u(a)v(a)d(a).\label{eq:rina}
\end{equation}
From now on, the norm of an ultrafunction will be defined as
\[
\left\Vert u\right\Vert =\left(\sqint|u(x)|^{2}\ dx\right)^{\frac{1}{2}}.
\]
Now let us examine the point 1 of the above definition. If we take
$f\in C_{{\rm comp}}^{0}(\mathbb{R}^{N})$, we have that $f^{*}\in V_{\Lambda}(\mathbb{R}^{N})$
and hence
\[
\sqint f^{\circ}(x)\,dx=\int f(x)dx.
\]
Thus the pointwise integral is an extension of the Riemann Integral
defined on $C_{comp}^{0}(\mathbb{R}^{N})$. However, if we take a
bounded open set $\Omega$ such that $m(\partial\Omega)=0$, then
we have that
\[
\int_{\Omega}f(x)dx=\int_{\overline{\Omega}}f(x)dx;
\]
however, the pointwise integral cannot have this property; in fact
\[
\sqint_{\overline{\Omega}}f^{\circ}(x)dx-\sqint_{\Omega}f^{\circ}(x)dx=\sqint_{\partial\Omega}f^{\circ}(x)dx>0
\]
since $\partial\Omega\neq \emptyset$. In particular, if $\Omega$ is a bounded
open set with smooth boundary and $f\in C^{0}(\mathbb{R}^{N})$, then
\begin{align*}
\sqint_{\Omega}f^{\circ}(x)\,dx & =\sqint f^{\circ}(x)\chi_{\Omega}(x)\,dx\\
 & =\sqint f^{\circ}(x)\theta_{\Omega}^{\circ}(x)\,dx-\frac{1}{2}\sqint f^{\circ}(x)\chi_{\partial\Omega}^{\circ}(x)\,dx\\
 & =\int_{\Omega}f(x)\,dx-\frac{1}{2}\sqint_{\partial\Omega}f^{\circ}(x)\,dx,
\end{align*}
and similarly
\[
\sqint_{\overline{\Omega}}f^{\circ}(x)\,dx=\int_{\Omega}f(x)\,dx+\frac{1}{2}\sqint_{\partial\Omega}f^{\circ}(x)\,dx;
\]
of course, the term $\frac{1}{2}\sqint f^{\circ}(x)\chi_{\partial E}(x)\,dx$
is an infinitesimal number and it is relevant only in some particular
problems.

\begin{defn}
\label{def:deri}The \textbf{\emph{ultrafunction derivative}}
\[
D_{i}:\ V^{\circ} (\mathbb{R}^{N})\rightarrow V^{\circ} (\mathbb{R}^{N})
\]
is a linear operator which satisfies the following properties:

\begin{enumerate}
\item $\forall f\in C^{1}(\mathbb{R}^{N})$, and $\forall x\in(\mathbb{R}^{N})^{*}$,
$x$ finite, 
\begin{equation}
D_{i}f^{\circ}(x)=\partial_{i}f^{*}(x);\label{eq:ellera}
\end{equation}
\item $\forall u,v\in V^{\circ}(\mathbb{R}^{N})$, 
\[
\sqint D_{i}uv\,dx=-\sqint uD_{i}v\,dx;
\]
\item if $\Omega$ is a bounded open set with smooth boundary, then
$\forall v\in V^{\circ}$,
\[
\sqint D_{i}\theta_{\Omega}v\,dx=-\int_{\partial\Omega}^{*}v\,(\mathbf{e}_{i}\cdot\mathbf{n}_{E})\,dS
\]
where $\mathbf{n}_{E}$ is the unit outer normal, $dS$ is the $(n-1)$-dimensional measure
and $(\mathbf{e}_{1},....,\mathbf{e}_{N})$ is the canonical basis
of $\mathbb{R}^{N}$.
\item the support\footnote{If $u$ is an ultrafunction the support of $u$ is the set $\left\{ x\in\Gamma\,\,|\,\,u(x)\neq 0\right\}$.}
of $D_{i}\sigma_{a}$ is contained in $\mathfrak{mon}(a)\cap\Gamma$.
\end{enumerate}
\end{defn}

Let us comment the above definition. Point (1) implies that $\forall f\in C^{1}(\mathbb{R}^{N})$,
and $\forall x\in\mathbb{R}^{N}$, 
\begin{equation}
D_{i}f^{\circ}(x)=\partial_{i}f(x);\label{eq:ellera-2-1-1-1}
\end{equation}
namely the ultrafunction derivative\textbf{\emph{ }}concides with
the usual partial derivative whenever $f\in C^{1}(\mathbb{R}^{N})$.
The meaning of point (2) is clear; we remark that this point is very
important in comparing ultrafunctions with distributions. Point (3) says
that $D_{i}\theta_{\Omega}$ is an ultrafunction whose support is
contained in $\partial\Omega\cap\Gamma$; it can be considered as
a signed measure concentrated on $\partial\Omega$. Point (4) says that
the ultrafunction derivative, as well as the usual derivative or the
distributional derivative, is a local operator, namely if $u$ is
an ultrafunction whose support is contained in a compact set $K$
with $K\subset\Omega$, then the support of $D_{i}u$ is contained
in $\Omega^{*}$. Moreover, property (4) implies that the ultrafunction
derivative is well defined in $V^{\circ}(\Omega)$ for any
open set $\Omega$ by the following formula:
\[
D_{i}u(x)=\sum_{a\in\Gamma\cap\Omega^{*}}u(a)D_{i}\sigma_{a}(x).
\]

\begin{rem}\rm
If $u\in V^{\circ}(\Omega)$ and $\bar{u}$ is an ultrafunction
in $V^{\circ}(\overline{\Omega})$ such that $\forall x\in\Omega,\,\,u(x)=\bar{u}(x)$,
then, by point (3), $\forall x\in\Omega^{*}$ such that $\mathfrak{mon}(x)\subset\Omega^{*}$
we have that
\[
D_{i}u(x)=D_{i}\bar{u}(x);
\]
however, this property fails for some $x\sim\partial\Omega^{*}.$
In fact the support of $D_{i}\sigma_{a}$ is contained in $\mathfrak{mon}(a)\cap\Gamma$, but not in $\left\{ a\right\} $.
\end{rem}
\begin{thm}
There exists a ultrafuctions space $V^{\circ}(\mathbb{R}^{N})$
which admits a pointwise integral and a ultrafunction derivative as
in Def. \ref{def:scorre} and \ref{def:deri}.
\end{thm}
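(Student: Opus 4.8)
The plan is to obtain every ingredient — the grid $\Gamma$, the subspace $V_\Lambda \subset V^*(\mathbb{R}^N)$ with $V^\circ = {}^\circ(V_\Lambda)$, the pointwise integral $\sqint$ together with its density $d$, and the derivatives $D_i$ — as a single coherent $\Lambda$-limit of finite-dimensional data indexed by $\lambda \in \mathfrak{L}$. The guiding principle is that a $\Lambda$-limit of internal objects is internal, and that any relation holding for every $\lambda$ in a qualified set holds for the limit; thus it suffices to attach to each $\lambda$ a finite node set $\Gamma_\lambda \subset \mathbb{R}^N$, a finite-dimensional space $V_\lambda \subset V(\mathbb{R}^N)$, positive weights $d_\lambda\colon \Gamma_\lambda \to \mathbb{R}$ and finite-difference operators $D_{i,\lambda}$ satisfying the finite analogues of Definitions \ref{def:scorre} and \ref{def:deri}, arranged so that each prescribed datum — every point of $\mathbb{R}^N$, every $f \in V$, and every $\theta_\Omega$ and $f\theta_\Omega$ admitted by the standing assumption — is eventually captured. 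Setting $\Gamma = \lim_{\lambda\uparrow\Lambda}\Gamma_\lambda$, $V_\Lambda = \lim_{\lambda\uparrow\Lambda}V_\lambda$, $d = \lim_{\lambda\uparrow\Lambda}d_\lambda$ and $D_i = \lim_{\lambda\uparrow\Lambda}D_{i,\lambda}$ then produces the asserted space with its integral and derivatives.

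At the finite level I would work with a unisolvent pair: choose $(V_\lambda,\Gamma_\lambda)$ so that pointwise evaluation $V_\lambda \to \mathbb{R}^{\Gamma_\lambda}$ (read off from the grid as in Definition \ref{def:lina} for functions defined only almost everywhere) is a linear isomorphism. In the limit this makes $^\circ\colon V_\Lambda \to V^\circ$ an $\mathbb{R}^*$-linear isomorphism; requiring moreover that each $\sigma_a$ lie in $V^\circ$ forces $\dim V_\lambda = |\Gamma_\lambda|$, so that $V^\circ$ is the full space $\mathfrak{G}(\mathbb{R}^N)$ of grid functions. Exactness of the quadrature, $\sum_{a\in\Gamma_\lambda}f(a)\,d_\lambda(a) = \int f\,dx$ for all $f \in V_\lambda$, then determines $d_\lambda$ uniquely and, testing against the nodal basis $g_{a,\lambda} \in V_\lambda$ characterized by $g_{a,\lambda}|_{\Gamma_\lambda} = \delta_a$, yields $d_\lambda(a) = \int g_{a,\lambda}\,dx$. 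Passing to the $\Lambda$-limit gives property (1) of Definition \ref{def:scorre} on $V_\Lambda$, while the hyperfinite-sum representation of property (2) is built into the construction, with $d(a) = \sqint\sigma_a$.

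For the derivative I would design $D_{i,\lambda}$ on $\mathbb{R}^{\Gamma_\lambda}$ to meet three demands at once: it is exact on the smooth part, $D_{i,\lambda}(f|_{\Gamma_\lambda}) = (\partial_i f)|_{\Gamma_\lambda}$ for $f \in C^1\cap V_\lambda$; it is skew-adjoint for the weighted inner product $\langle u,v\rangle_\lambda = \sum_a u(a)v(a)\,d_\lambda(a)$; and it is local, with $D_{i,\lambda}\sigma_a$ supported at nodes adjacent to $a$. The first two are compatible because, via exactness of the quadrature, skew-adjointness on the smooth subspace reduces to the classical identity $\int \partial_i f\cdot g = -\int f\cdot\partial_i g$, which carries no boundary term for compactly supported smooth $f,g$; I would therefore enlarge $V_\lambda$ to contain the finitely many relevant products $\partial_i f\cdot g$, so that the quadrature integrates them exactly. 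The operator, thus pinned down on the smooth subspace, extends to a genuinely skew-adjoint and locally supported operator by a finite-dimensional linear-algebra argument, and the $\Lambda$-limit delivers properties (1), (2) and (4). Property (3) is then a consequence rather than a separate hypothesis: testing skew-adjointness against $\theta_\Omega^\circ$ and using the boundary value $\theta_\Omega = 1/2$ on $\partial\Omega$ recorded in \eqref{eq:estate} turns the identity into the stated discrete Gauss formula for $D_i\theta_\Omega$.

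The genuine obstacle is the positivity $d_\lambda(a) > 0$. Unisolvence makes the weights unique, hence not freely adjustable, and the nodal functions $g_{a,\lambda}$ need not be nonnegative, so $d_\lambda(a) = \int g_{a,\lambda}\,dx$ could a priori carry any sign. I would address this by choosing the auxiliary part of $V_\lambda$ to consist of sharply localized nonnegative bumps, one associated with each node, so that each nodal function is a small perturbation of a bump of strictly positive integral; keeping the prescribed functions few relative to these dominant bumps, and enlarging $\Gamma_\lambda$ to preserve unisolvence, one can ensure $\int g_{a,\lambda} > 0$ for every node. This is the delicate step, and it is exactly the point at which the finite-dimensional bookkeeping must be carried out with care. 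Once the finite data are consistent for every $\lambda$ in a qualified set, each clause of Definitions \ref{def:scorre} and \ref{def:deri} is an internal statement true eventually, hence true of the $\Lambda$-limit, and the construction is complete.
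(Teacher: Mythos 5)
The paper itself does not prove this theorem: it delegates the entire construction to \cite{BBG} and merely sets $V^{\circ}(\mathbb{R}^{N})=\{u^{\circ}:u\in V_{\Lambda}\}$. Your strategy --- realizing $\Gamma$, $V_{\Lambda}$, $d$ and $D_{i}$ as a single $\Lambda$-limit of finite node sets, unisolvent finite-dimensional spaces, quadrature weights and difference operators, and transferring eventually-true finite relations to the limit --- is indeed the strategy of \cite{BBG}, so the burden on you is to actually carry it out; and at the two points where the real work lies your argument has genuine gaps. The first is the positivity $d_{\lambda}(a)>0$: unisolvence forces $d_{\lambda}(a)=\int g_{a,\lambda}\,dx$ with $g_{a,\lambda}$ not sign-definite, and your remedy (localized nonnegative bumps, with the admission that ``the finite-dimensional bookkeeping must be carried out with care'') is a statement of intent, not a proof. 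Since positivity of $d$ is part of Definition \ref{def:scorre}.(2), it cannot be deferred.

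The second, more serious, gap is the claim that property (3) of Definition \ref{def:deri} is ``a consequence rather than a separate hypothesis.'' Skew-adjointness gives $\sqint D_{i}\theta_{\Omega}\,v\,dx=-\sqint\theta_{\Omega}\,D_{i}v\,dx$, so you would need $\sqint\theta_{\Omega}\,D_{i}v\,dx=\int_{\partial\Omega}^{*}v\,(\mathbf{e}_{i}\cdot\mathbf{n})\,dS$ for \emph{every} $v\in V^{\circ}$. Your argument (exact quadrature on $\theta_{\Omega}\,\partial_{i}f$ plus the classical divergence theorem) works only for $v=f^{\circ}$ with $f\in C^{1}$, because only then is $D_{i}v$ the restriction of a classical derivative. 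The external set $\{f^{\circ}:f\in C^{1}\}$ does not internally span $V^{\circ}$, which has hyperfinite dimension $|\Gamma|$, so two internal linear functionals agreeing on it may still differ on $V^{\circ}$; testing (3) against $v=\sigma_{a}$ shows that it prescribes the individual values $D_{i}\theta_{\Omega}(a)\,d(a)$ and is therefore an independent linear constraint on $D_{i}$, to be imposed simultaneously with skew-adjointness (2), locality (4) and exactness (1). Establishing that this overdetermined-looking system is solvable at each finite level is precisely the nontrivial content of the construction of Caccioppoli ultrafunctions in \cite{BBG}, and your sketch does not establish it.
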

\begin{proof}
In \cite{BBG} there is a construction of a space $V_{\Lambda}(\mathbb{R}^{N})$
which satisfies the desired properties. The conclusion follows taking
\[
V^{\circ}(\mathbb{R}^{N})=\left\{ u^{\circ}:u\ensuremath{\in V_{\Lambda}}\right\}. \qedhere
\]
\end{proof}

\subsection{The splitting of an ultrafunction}\label{splitting}

In many applications, it is useful to split an ultrafunction $u$
in a part $w^{\circ}$ which is the canonical extension
of a standard function $w$ and a part $\psi$ which is not directly
related to any classical object.
If $u\in V^{\circ}(\Omega)$, we set
\[
S=\left\{ x\in\Omega\,|\ \text{$u(x)$ is infinite}\right\} 
\]
and
\[
w(x)=\begin{cases}
st(u(x)) & \text{if}\ x\in\Omega\setminus S;\\
0 & \text{if}\ x\in S.
\end{cases}
\]
We will refer to $S$ as to the singular set of the ultrafunction
$u$. 
\begin{defn}\label{splitting}
For every ultrafunction $u$ consider the splitting
\[
u=w^{\circ}+\psi
\]
where

\begin{itemize}
\item $w=\overline{w}_{|\Omega\setminus S}$ and $w^{\circ}$,
which is defined by Def. \ref{def:lina}, is called the \textbf{\emph{functional
part}} of $u$;
\item $\psi:=u-w^{\circ}$ is called the \textbf{\emph{singular
part}} of $u$.
\end{itemize}
\end{defn}
\medskip{}
Notice that $w^{\circ}$, the functional part of $u$, may
assume infinite values for some $x\in\Omega^{*}\setminus S^{*}$, but
they are determined by the values of $w$ which is a standard function
defined on $\Omega$.
\begin{example}
Take $\varepsilon\sim0,$ and 
\[
u(x)=\log(x^{2}+\varepsilon^{2}).
\]
In this case
\begin{itemize}
\item $w(x)=\begin{cases}
\log(x^{2})=2\,\log(|x|) & \text{if}\,\,x\neq0\\
0 & \text{if}\,\,x=0
\end{cases}$
\item $\psi(x)=\begin{cases}
\log(x^{2}+\varepsilon^{2})-\log(x^{2})=\log\left(1+\frac{\varepsilon^{2}}{x^{2}}\right) & \text{if}\ x\neq0\\
\log(\varepsilon^{2}) & \text{if}\ x=0
\end{cases}$
\item $S:=\left\{ 0\right\}.$
\end{itemize}
\end{example}
We conclude this section with the following trivial proposition which,
nevertheless, is very useful in applications: 
\begin{prop}
\label{prop:carola}Take a Banach space $W$ such that $\mathscr{D}(\Omega)\subset W\subset L^{1}(\Omega)$.
Assume that $\{u_{n}\}\subseteq V(\Omega)$ is
a sequence which converges weakly in $W$ and pointwise to a function
$w$; then, if we set
\[
u:=\left(\left(\lim_{\lambda\uparrow\Lambda}\ u_{|\lambda|}\right)^{^{\circ}}\right),
\]
we have that
\[
u=w^{\circ}+\psi,
\]
 where 
\[
\forall v\in W,\,\,\sqint\psi v\,dx\sim0;
\]
moreover, if
\[
\lim_{n\rightarrow\infty}\left\Vert u_{n}-w\right\Vert _{W}=0
\]
then $\left\Vert \psi\right\Vert _{W}\sim0.$ 
\end{prop}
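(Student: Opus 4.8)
The plan is to reduce everything to a single bridging fact: along the fine ultrafilter $\mathcal{U}$ the cardinality $|\lambda|$ tends to infinity, so the net $\lambda\mapsto u_{|\lambda|}$ ``runs through'' the sequence $\{u_{n}\}$, and combining this with \eqref{eq:teresa} converts the standard part of a $\Lambda$-limit of any net of the form $\lambda\mapsto F(u_{|\lambda|})$ into the ordinary limit $\lim_{n\to\infty}F(u_{n})$. Concretely, given $\lambda_{0}\in\mathfrak{L}$ with $|\lambda_{0}|=m$, the qualified set $Q(\lambda_{0})=\{\mu\mid\lambda_{0}\sqsubseteq\mu\}$ consists of sets of size $\geq m$, so $\{\lambda\mid|\lambda|\geq m\}\in\mathcal{U}$ for every $m$; hence if $(a_{n})\subset\mathbb{R}$ and $a_{n}\to L$, then $\{\lambda\mid|a_{|\lambda|}-L|<\varepsilon\}\supseteq\{\lambda\mid|\lambda|\geq N_{\varepsilon}\}\in\mathcal{U}$, so $\lim_{\lambda\to\Lambda}a_{|\lambda|}=L$, which by \eqref{eq:teresa} reads $st\!\left(\lim_{\lambda\uparrow\Lambda}a_{|\lambda|}\right)=L$. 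I would state this once and reuse it throughout.

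Next I would identify the functional part. Set $U:=\lim_{\lambda\uparrow\Lambda}u_{|\lambda|}$, an internal function with $u=U^{\circ}$. For a standard $x\in\Omega\subset\Gamma$, evaluating $U$ at the constant net $x$ gives $u(x)=U(x)=\lim_{\lambda\uparrow\Lambda}u_{|\lambda|}(x)$; taking standard parts and invoking the bridging fact together with the pointwise convergence yields $st(u(x))=\lim_{\lambda\to\Lambda}u_{|\lambda|}(x)=w(x)$. In particular $u(x)$ is finite for every standard $x$, so the singular set $S$ of the splitting of Definition~\ref{splitting} is empty and the functional part of $u$ is exactly $w^{\circ}$; this already gives the decomposition $u=w^{\circ}+\psi$ with $\psi:=u-w^{\circ}$.

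For the weak estimate I would read $\sqint\psi v\,dx$ as $\sqint\psi\,v^{\circ}\,dx$ and use that the hypothesis $v\in W$ is meant to make $g\mapsto\int gv\,dx$ a continuous functional on $W$. By \eqref{eq:pr} the internal product $U v^{*}$ equals $\lim_{\lambda\uparrow\Lambda}(u_{|\lambda|}v)$, and $(Uv^{*})^{\circ}=u\,v^{\circ}$ on $\Gamma$, so by the defining property \eqref{eq:bimba} of the pointwise integral (together with the realization of the natural extension $\int^{*}$ as a $\Lambda$-limit) one gets $\sqint u\,v^{\circ}\,dx=\lim_{\lambda\uparrow\Lambda}\int u_{|\lambda|}v\,dx$. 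Taking standard parts and applying the bridging fact and the weak convergence gives $st\!\left(\sqint u\,v^{\circ}\,dx\right)=\lim_{n\to\infty}\int u_{n}v\,dx=\int wv\,dx$; the same computation for the constant net $w$ (or directly from \eqref{eq:bimba}) gives $st\!\left(\sqint w^{\circ}v^{\circ}\,dx\right)=\int wv\,dx$, and subtraction yields $\sqint\psi\,v^{\circ}\,dx\sim0$. For the ``moreover'' part, $\psi=\Psi^{\circ}$ with $\Psi:=\lim_{\lambda\uparrow\Lambda}(u_{|\lambda|}-w)$, so $\|\psi\|_{W}$ (read as $\|\Psi\|_{W}^{*}$) equals $\lim_{\lambda\uparrow\Lambda}\|u_{|\lambda|}-w\|_{W}$, whose standard part is $\lim_{n\to\infty}\|u_{n}-w\|_{W}=0$; hence $\|\psi\|_{W}\sim0$.

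The step I expect to be the main obstacle is the identity $\sqint u\,v^{\circ}\,dx=\lim_{\lambda\uparrow\Lambda}\int u_{|\lambda|}v\,dx$. It requires that the products $u_{|\lambda|}v$ be integrable and lie in the space to which \eqref{eq:bimba} applies, so that the pointwise integral of the restriction of the internal product coincides with the natural extension of the ordinary integral, and it requires identifying $(Uv^{*})^{\circ}$ with $u\,v^{\circ}$ on the grid. Controlling the integrability of these products under the loose hypothesis $\mathscr{D}(\Omega)\subset W\subset L^{1}(\Omega)$ — equivalently, pinning down for which $v$ the pairing $g\mapsto\int gv\,dx$ is $W$-continuous — is the delicate point; once this is granted, the remainder is a mechanical application of the bridging fact and \eqref{eq:teresa}.
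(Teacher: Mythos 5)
Your identification of the functional part via the bridging fact, and your treatment of the ``moreover'' clause through $\left\Vert \psi\right\Vert _{W}=\left(\lim_{\lambda\uparrow\Lambda}\left\Vert u_{|\lambda|}-w\right\Vert _{W}\right)^{\circ}$, both match the paper. But the middle claim $\sqint\psi v\,dx\sim0$ is where your argument has a genuine gap, and it is exactly the one you flagged without closing. Your route needs the identity $\sqint u\,v^{\circ}\,dx=\lim_{\lambda\uparrow\Lambda}\int u_{|\lambda|}v\,dx$, which requires the products $u_{|\lambda|}v$ to lie in $V(\Omega)$ so that the internal product belongs to $V_{\Lambda}$ and \eqref{eq:bimba} applies; under the bare hypotheses $\mathscr{D}(\Omega)\subset V(\Omega)\subset L^{1}(\Omega)$ and $v\in W\subset L^{1}(\Omega)$ these products need not even be integrable. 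Worse, even granting that identity, weak convergence of $\{u_{n}\}$ in $W$ only controls pairings with elements of $W'$, whereas the conclusion quantifies over $v\in W$; for a general such $W$ (e.g.\ $W=L^{1}$) the map $g\mapsto\int gv\,dx$ is not a continuous functional on $W$, so $\int u_{n}v\,dx\to\int wv\,dx$ is simply not available. So the step you called ``delicate'' is not a technicality to be granted: it is false in the stated generality.

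The paper's proof sidesteps all of this and does not use the weak convergence hypothesis for this part at all. From pointwise convergence it deduces $\psi(a)\sim0$ for every $a\in\Gamma$; since $\Gamma$ is hyperfinite, $\eta:=\max_{a\in\Gamma}|\psi(a)|$ exists and is infinitesimal, and then
\[
\Bigl|\sqint\psi v\,dx\Bigr|\leq\eta\sqint|v|\,dx\sim\eta\int|v|\,dx\sim0
\]
using only $v\in L^{1}(\Omega)$. The lesson is that the estimate is a uniform (sup-norm) statement obtained from the hyperfiniteness of the grid, not a duality statement obtained from weak convergence; replacing your pairing argument with this max-over-a-hyperfinite-set bound repairs the proof.
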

\begin{proof}
As a consequence of the pointwise convergence of $\{u_{n}\}$ to $w$
we have that $\forall a\in\Gamma\,u(a)\sim w^{\circ}(a)$. In particular,
$\forall a\in\Gamma\,\psi(a)\sim0$. As $\Gamma$ is hyperfinite,
the set $\{|\psi(a)|\mid a\in\Gamma\}$ has a max $\eta\sim0$. Hence
for every $v\in W$ we have 
\[
\left|\sqint\psi v\,dx\right|\leq\sqint\left|\psi\right||v|\,dx\leq\eta\sqint|v|\,dx\sim\eta\int|v|dx\sim 0,
\]
as $\eta\sim0$ and $\int|v|dx\in\mathbb{R}$.
For the second statement let us notice that
\[
\left\Vert \psi\right\Vert _{W}=\left\Vert u-w^{\circ}\right\Vert _{W}=\left(\lim_{\lambda\uparrow\Lambda}\left\Vert u_{|\lambda|}-w\right\Vert _{W}\right)^{\circ}\sim0
\]
as $\lim_{n\rightarrow\infty}\left\Vert u_{n}-w\right\Vert _{W}=0$.
\end{proof}

An immediate consequence of Prop. \ref{prop:carola} is the following:
\begin{cor}
If $w\in L^{1}(\Omega)$ then
\[
\sqint w^{\circ}(x)dx\sim\intop w(x)dx.
\]
\end{cor}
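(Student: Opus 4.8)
The plan is to read the corollary off Proposition \ref{prop:carola} with the choice $W=L^{1}(\Omega)$. First I would produce an approximating sequence: since $\mathscr{D}(\Omega)\subseteq V(\Omega)$ is dense in $L^{1}(\Omega)$, I can pick $\{u_{n}\}\subseteq\mathscr{D}(\Omega)$ with $\Vert u_{n}-w\Vert_{L^{1}(\Omega)}\to0$, and, after passing to a subsequence, with $u_{n}\to w$ pointwise almost everywhere. Strong $L^{1}$-convergence gives weak $L^{1}$-convergence, so the hypotheses of Proposition \ref{prop:carola} hold. Putting $u:=\bigl(\lim_{\lambda\uparrow\Lambda}u_{|\lambda|}\bigr)^{\circ}$, the proposition yields the splitting $u=w^{\circ}+\psi$ together with $\Vert\psi\Vert_{L^{1}(\Omega)}\sim0$ (the ``moreover'' clause, which uses only the strong $L^{1}$-convergence). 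I would then write $\sqint w^{\circ}\,dx=\sqint u\,dx-\sqint\psi\,dx$ and treat the two pieces independently.

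For $\sqint u\,dx$ I would argue that $\widetilde{u}:=\lim_{\lambda\uparrow\Lambda}u_{|\lambda|}\in V_{\Lambda}(\mathbb{R}^{N})$, so property $(1)$ of Definition \ref{def:scorre} gives $\sqint u\,dx=\int\widetilde{u}\,dx$; since the integral commutes with the $\Lambda$-limit, $\int\widetilde{u}\,dx=\lim_{\lambda\uparrow\Lambda}\int u_{|\lambda|}\,dx$. The real net $\lambda\mapsto\int u_{|\lambda|}\,dx$ converges to $\int w\,dx$, because $|\int u_{n}-\int w|\le\Vert u_{n}-w\Vert_{L^{1}(\Omega)}\to0$ and the fineness of $\mathcal{U}$ forces the $\Lambda$-limit of $\lambda\mapsto\int u_{|\lambda|}\,dx$ to have the sequential limit as its standard part. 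By \eqref{eq:teresa} this yields $\sqint u\,dx\sim\int w\,dx$.

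For $\sqint\psi\,dx$ I would use that the pointwise integral is a hyperfinite sum with strictly positive weights $d(a)$, so that $\bigl|\sqint\psi\,dx\bigr|=\bigl|\sum_{a\in\Gamma}\psi(a)d(a)\bigr|\le\sum_{a\in\Gamma}|\psi(a)|\,d(a)=\sqint|\psi|\,dx=\Vert\psi\Vert_{L^{1}(\Omega)}\sim0$. Combining the two estimates gives $\sqint w^{\circ}\,dx\sim\int w\,dx$, which is the claim.

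The genuinely delicate points are not in the algebra but at the standard / non-Archimedean interface: I must know that $V(\Omega)$ is $L^{1}$-dense and that one sequence can be chosen converging both in $L^{1}$ and pointwise a.e.\ (so that the functional part of $u$ really is $w^{\circ}$), and I must justify the two compatibility facts used for the first term, namely that the extended integral commutes with the $\Lambda$-limit and that $\lim_{\lambda\uparrow\Lambda}\int u_{|\lambda|}\,dx$ has standard part $\int w\,dx$. I also need the identification $\Vert\psi\Vert_{L^{1}(\Omega)}=\sqint|\psi|\,dx$ of the abstract $W$-norm in Proposition \ref{prop:carola} with the pointwise $L^{1}$-norm of the ultrafunction $\psi$; once this is settled the whole argument collapses to the triangle inequality for hyperfinite sums.
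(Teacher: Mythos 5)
Your proposal is correct and follows essentially the same route as the paper: approximate $w$ strongly in $L^{1}$ by elements of $V(\Omega)$, form $u=\bigl(\lim_{\lambda\uparrow\Lambda}u_{|\lambda|}\bigr)^{\circ}$, invoke Proposition \ref{prop:carola} to get $\sqint\psi\,dx\sim0$ from $\Vert\psi\Vert_{L^{1}}\sim0$, and use Definition \ref{def:scorre}.(1) to identify $\sqint u\,dx$ with $\lim_{\lambda\uparrow\Lambda}\int u_{|\lambda|}\,dx\sim\int w\,dx$. Your extra step of passing to a subsequence converging pointwise a.e.\ is a sensible precaution (Proposition \ref{prop:carola} formally assumes pointwise convergence, which the paper's proof glosses over), but it does not change the argument.
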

\begin{proof}
Since $V(\Omega)$ is dense in $L^{1}(\Omega)$ there is a sequence
$u_{n}\in V(\Omega)$ which converges strongly to $w$ in $L^{1}(\Omega)$.
Now set 
\[
u:=\left(\lim_{\lambda\uparrow\Lambda}\ u_{|\lambda|}\right)^{\circ}.
\]
By Prop. \ref{prop:carola}, we have that 
\[
u=w^{\circ}+\psi
\]
with $\left\Vert \psi\right\Vert _{L^{1}}\sim0.$ Then 
\[
\sqint u(x)dx\sim\sqint w^{\circ}(x)dx.
\]
On the other hand, since $u\in V^{\circ}(\Omega),$ by Def. \ref{def:scorre}.(1),
\begin{align*}
\sqint u\text{\emph{(x)dx}} & =\int^{*}u^{\circ}(x)dx=\lim_{\lambda\uparrow\Lambda}\ \int u_{|\lambda|}dx\\
 & \sim\lim_{n\rightarrow\Lambda}\ \int_{\Omega}u_{n}dx=\intop w(x)dx.\qedhere
\end{align*}
\end{proof}

\subsection{The Gauss divergence theorem}

First of all, we fix the notation for the main differential operators: 
\begin{itemize}
\item $\nabla=(\partial_{1},...,\partial_{N})$ will denote the usual gradient
of standard functions;
\item $\nabla^{*}=(\partial_{1}^{*},...,\partial_{N}^{*})$ will denote
the natural extension of internal functions;
\item $D=(D_{1},...,D_{N})$ will denote the canonical extension of the
gradient in the sense of ultrafunctions.
\end{itemize}
Next let us consider the divergence:
\begin{itemize}
\item $\nabla\cdot\phi=\partial_{1}\phi_{1}+...+\partial_{N}\phi_{N}$ will
denote the usual divergence of standard vector fields $\phi\in\left[C^{1}(\mathbb{R}^{N})\right]^{N}$;
\item $\nabla^{*}\cdot\phi=\partial_{1}^{*}\phi_{1}+...+\partial_{N}^{*}\phi_{N}$
will denote the divergence of internal vector fields $\phi\in\left[C^{1}(\mathbb{R}^{N})^{*}\right]^{N}$;
\item $D\cdot\phi=D_{1}\phi_{1}+...+D_{N}\phi_{N}$ will denote the divergence
of vector valued ultrafuction $\phi\in\left[V^{\circ}(\mathbb{R}^{N})^{*}\right]^{N}$.
\end{itemize}
And finally, we can define the Laplace operator of an ultrafunction
$u\in V^{\circ}(\Omega)$ as the only ultrafunction $\bigtriangleup^{\circ}u\in V^{\circ}(\Omega)$
such that
\[
\forall v\in V_{0}^{\circ}(\overline{\Omega}),\,\,\,\sqint\bigtriangleup^{\circ}uvdx=-\sqint Du\cdot Dv\,dx,
\]
where
\[
V_{0}^{\circ}(\overline{\Omega}):=\left\{ v\in V^{\circ}(\overline{\Omega})\,\,|\,\,\forall x\in\partial\Omega\cap\Gamma,\,\,v(x)=0\right\} .
\]

\medskip{}

By definition \ref{def:deri}.(3), for any bounded open set $\Omega$
with smooth boundary,
\[
\sqint D_{i}\theta_{\Omega}v\,dx=-\int_{\partial\Omega}^{*}v\,(\mathbf{e}_{i}\cdot\mathbf{n}_{E})\,dS,
\]
and by definition \ref{def:deri}.(2)
\[
\sqint D_{i}\theta_{\Omega}v\,dx=-\sqint D_{i}v\,\theta_{\Omega}dx
\]
so that
\[
\sqint D_{i}v\,\theta_{\Omega}dx=\int_{\partial\Omega}^{*}v\,(\mathbf{e}_{i}\cdot\mathbf{n}_{\Omega})\,dS.
\]
Now, if we take a vector field $\phi=(v_{1},...,v_{N})\in\left[V^{\circ}(\mathbb{R}^{N})\right]^{N}$,
by the above identity, we get
\begin{equation}
\sqint D\cdot\phi\,\theta_{\Omega}\,dx=\int_{\partial\Omega}^{*}\phi\cdot\mathbf{n}_{\Omega}\,dS.\label{eq:camilla}
\end{equation}
Now, if $\phi\in C^{1},$ by definition \ref{def:deri}.(1), we get the divergence
Gauss theorem:
\[
\int_{\Omega}\nabla\cdot\phi\,dx=\int_{\partial\Omega}\phi\cdot\mathbf{n}_{E}\,dS.
\]
Then, (\ref{eq:camilla}) is a generalization of the Gauss' theorem
which makes sense for any bounded open set $\Omega$ with smooth boundary
and every vectorial ultrafunction $\phi$. Next, we want to generalize
Gauss' theorem to any subset of $A\subset\mathbb{R}^{N}$. 
It is well known that, for any bounded open set $\Omega$ with smooth
boundary, the distributional derivative $\nabla\theta_{\Omega}$ is
a vector valued Radon measure and we have that
\[
\left\langle |\nabla\theta_{\Omega}|,1\right\rangle =m_{N-1}(\partial\Omega)
\]
Then, the following definition is a natural generalization:
\begin{defn}
\label{def:misura}If $A$ is a measurable subset of $\mathbb{R}^{N}$,
we set
\[
m_{N-1}(\partial\Omega):=\sqint|D\theta_{A}^{\circ}|\,dx
\]
and $\forall v\in V^{\circ}(\mathbb{R}^{N}),$
\begin{equation}
\sqint_{\partial A}v(x)\,dS:=\sqint v(x)\,|D\theta_{A}^{\circ}|\,dx.\label{eq:nina}
\end{equation}
\end{defn}

\begin{rem}\rm
Notice that
\[
\sqint_{\partial A}v(x)\,dS\neq\sqint_{\partial A}v(x)\,dx.
\]
In fact the left hand term has been defined as follows:
\[
\sqint_{\partial A}v(x)\,dS=\sum_{x\in\Gamma}v(x)\,|D\theta_{A}^{\circ}(x)|\,d(x),
\]
while the right hand term is
\[
\sqint_{\partial A}v(x)\,dx=\sum_{x\in\Gamma\cap\partial A^{*}}v(x)\,d(x);
\]
in particular if $\partial A$ is smooth and $v(x)$ is bounded, $\sqint_{\partial A}v(x)\,dx$
is an infinitesimal number.
\end{rem}
\begin{thm}
\label{thm:41}If $A$ is an arbitrary measurable subset of $\mathbb{R}^{N}$,
we have that
\begin{equation}
\sqint D\cdot\phi\,\theta_{A}^{\circ}\,dx=\sqint_{\partial A}\phi\cdot\mathbf{n}_{A}^{\circ}(x)\,dS,\label{eq:gauus}
\end{equation}
where
\[
\mathbf{n}_{A}^{\circ}(x)=\begin{cases}
-\frac{D\theta_{A}^{\circ}(x)}{|D\theta_{A}^{\circ}(x)|} & \text{if}\,\,\,D\theta_{A}^{\circ}(x)\neq 0;\\
0 & \text{if}\,\,\,D\theta_{A}^{\circ}(x)=0.
\end{cases}
\]
\end{thm}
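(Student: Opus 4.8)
The plan is to show that both sides of \eqref{eq:gauus} reduce to the single hyperfinite integral $-\sqint \phi\cdot D\theta_A^\circ\,dx$, so that the identity becomes a tautology once the definitions are unfolded. This mirrors the smooth-boundary computation leading to \eqref{eq:camilla}, except that here I must avoid Definition \ref{def:deri}.(3) (which needs a smooth $\partial\Omega$) and work purely algebraically, so that no regularity of $A$ is used anywhere.

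First I would rewrite the left-hand side. Writing $\phi=(v_1,\dots,v_N)$ and expanding $D\cdot\phi=\sum_{i=1}^N D_i v_i$, I apply the integration-by-parts rule of Definition \ref{def:deri}.(2) to each summand, pairing $v_i$ with $\theta_A^\circ$, to obtain
\[
\sqint D\cdot\phi\,\theta_A^\circ\,dx=\sum_{i=1}^N\sqint D_i v_i\,\theta_A^\circ\,dx=-\sum_{i=1}^N\sqint v_i\,D_i\theta_A^\circ\,dx=-\sqint\phi\cdot D\theta_A^\circ\,dx.
\]
For the right-hand side I unfold Definition \ref{def:misura}, which converts the boundary integral into a weighted bulk integral, $\sqint_{\partial A}\phi\cdot\mathbf{n}_A^\circ\,dS=\sqint (\phi\cdot\mathbf{n}_A^\circ)\,|D\theta_A^\circ|\,dx$. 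The key step is the pointwise identity, valid for every $x\in\Gamma$,
\[
\bigl(\phi(x)\cdot\mathbf{n}_A^\circ(x)\bigr)\,|D\theta_A^\circ(x)|=-\phi(x)\cdot D\theta_A^\circ(x).
\]
At points where $D\theta_A^\circ(x)\neq 0$ this follows at once from $\mathbf{n}_A^\circ=-D\theta_A^\circ/|D\theta_A^\circ|$, the modulus cancelling; at points where $D\theta_A^\circ(x)=0$ both sides vanish, because the two-case definition sets $\mathbf{n}_A^\circ(x)=0$ there, removing the apparent $0/0$. Integrating this identity yields the same value $-\sqint\phi\cdot D\theta_A^\circ\,dx$ for the right-hand side, which proves \eqref{eq:gauus}.

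The computation itself is only a few lines; the point demanding care is the legitimacy of applying Definition \ref{def:deri}.(2) to $\theta_A^\circ$, since for an arbitrary measurable $A$ the function $\theta_A$ need not lie in $V(\mathbb{R}^N)$. I would resolve this by noting that $\theta_A^\circ=\sum_{a\in\Gamma}\theta_A^*(a)\sigma_a$ is a hyperfinite linear combination of the ultrafunctions $\sigma_a\in V^\circ(\mathbb{R}^N)$, and that the antisymmetry expressed in Definition \ref{def:deri}.(2), holding on the pairs $\sigma_a,\sigma_b$, extends to all such hyperfinite combinations by internal linearity; alternatively one invokes the explicit construction of \cite{BBG}. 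The only genuinely substantive ingredient beyond bookkeeping is thus the case analysis on the zero set of $D\theta_A^\circ$, which is exactly what Definition \ref{def:misura} and the definition of $\mathbf{n}_A^\circ$ are engineered to handle.
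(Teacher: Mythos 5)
Your proof is correct and follows essentially the same route as the paper's: integrate by parts via Definition \ref{def:deri}.(2) to reduce the left-hand side to $-\sqint\phi\cdot D\theta_{A}^{\circ}\,dx$, then unfold Definition \ref{def:misura} and the two-case definition of $\mathbf{n}_{A}^{\circ}$ to see the right-hand side equals the same quantity (the paper cites Definition \ref{def:deri}.(3) for the first step, but this is evidently a typo for (2), which is what you correctly use). Your extra remarks --- the explicit treatment of the zero set of $D\theta_{A}^{\circ}$ and the justification that the integration-by-parts identity applies to $\theta_{A}^{\circ}$ for arbitrary measurable $A$ --- are details the paper leaves implicit, and they only strengthen the argument.
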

\begin{proof}
By Definition \ref{def:deri}.(3)

\[
\sqint D\cdot\phi\,\theta_{A}^{\circ}dx=-\sqint\phi\cdot D\theta_{A}^{\circ}dx,
\]
then, using the definition of $\mathbf{n}_{A}^{\circ}(x)$
and (\ref{eq:nina}), the above formula can be written as follows:
\[
\sqint D\cdot\phi\,\theta_{A}^{\circ}dx=\sqint\phi\cdot\mathbf{n}_{A}^{\circ}\:|D\theta_{A}^{\circ}|\,dx=\sqint_{\partial A}\phi\cdot\mathbf{n}_{A}^{\circ}\,dS.
\]
\end{proof}

\subsection{Ultrafunctions and distributions}

One of the most important properties of the ultrafunctions is that
they can be seen (in some sense that we will make precise in this
section) as generalizations of the distributions.
\begin{defn}
\label{DEfCorrespondenceDistrUltra}The space of \textbf{\emph{generalized
distribution}} on $\Omega$ is defined as follows: 
\[
\mathcal{\mathscr{D}}_{G}^{\prime}(\Omega)=V^{\circ}(\Omega)/N,
\]
where 
\[
N=\left\{ \tau\in V^{\circ}(\Omega)\ |\ \forall\varphi\in\mathscr{D}(\Omega),\ \int\tau\varphi\ dx\sim0\right\} .
\]
\end{defn}
The equivalence class of $u$ in $V^{\circ}(\Omega)$ will
be denoted by 
\[
\left[u\right]_{\mathscr{D}}.
\]

\begin{defn}
Let $\left[u\right]_{\mathfrak{\mathscr{D}}}$ be a generalized distribution.
We say that $\left[u\right]_{\mathscr{D}}$ is a bounded generalized
distribution if $\forall\varphi\in\mathfrak{\mathcal{\mathscr{D}}}(\Omega),\,\int u\varphi^{*}\ dx\ \ \text{is\ finite}$.
We will denote by $\mathscr{D}_{GB}^{\prime}(\Omega)$ the set of bounded generalized distributions.
\end{defn}

We have the following result.

\begin{thm}
\label{bello}There is a linear isomorphism 
\[
\Phi:\mathfrak{\mathcal{\mathscr{D}}}_{GB}^{\prime}(\Omega)\rightarrow\mathfrak{\mathscr{D}}^{\prime}(\Omega)
\]
such that, for every \textup{$\left[u\right]\in\mathfrak{\mathcal{\mathscr{D}}}_{GB}^{\prime}(\Omega)$
}\textup{\emph{and}}\textup{ }for every $\varphi\in\mathfrak{\mathscr{D}}(\Omega)$
\[
\left\langle \Phi\left(\left[u\right]_{\mathfrak{\mathcal{\mathscr{D}}}}\right),\varphi\right\rangle _{\mathfrak{\mathcal{\mathscr{D}}}(\Omega)}=st\left(\sqint u\,\varphi^{\ast}\ dx\right).
\]
\end{thm}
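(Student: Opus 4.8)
The plan is to exhibit $\Phi$ by the very formula in the statement and then verify, in turn, that it is well defined, linear, injective and surjective. First I would fix a representative $u\in V^{\circ}(\Omega)$ of a class $[u]_{\mathscr{D}}\in\mathscr{D}_{GB}'(\Omega)$ and set $\langle\Phi([u]_{\mathscr{D}}),\varphi\rangle:=st\left(\sqint u\,\varphi^{*}\,dx\right)$ for $\varphi\in\mathscr{D}(\Omega)$. The standard part is meaningful precisely because $[u]_{\mathscr{D}}$ is \emph{bounded}, so that $\sqint u\,\varphi^{*}\,dx$ is a finite hyperreal for every $\varphi$; and the value is independent of the chosen representative, since two representatives differ by an element $\tau\in N$, for which $\sqint\tau\,\varphi^{*}\,dx\sim 0$ and hence the two standard parts coincide. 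Linearity of $\Phi([u]_{\mathscr{D}})$ in $\varphi$ and of $\Phi$ in $[u]_{\mathscr{D}}$ is immediate from linearity of the pointwise integral and of $st$. Injectivity is then essentially the definition of the quotient: $\Phi([u]_{\mathscr{D}})=0$ means $\sqint u\,\varphi^{*}\,dx\sim 0$ for all $\varphi\in\mathscr{D}(\Omega)$, that is $u\in N$, that is $[u]_{\mathscr{D}}=0$.

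The substantive point is to show that $\Phi([u]_{\mathscr{D}})$ is genuinely a distribution, i.e. continuous on $\mathscr{D}(\Omega)$. Here I would exploit the internal structure: write $u=v^{\circ}$ with $v=\lim_{\lambda\uparrow\Lambda}v_{\lambda}$, $v_{\lambda}\in V(\Omega)\subset L^{1}(\Omega)$, so that by Definition \ref{def:scorre}(1) and the definition of the $\Lambda$-limit one has $\sqint u\,\varphi^{*}\,dx=\lim_{\lambda\uparrow\Lambda}\int v_{\lambda}\varphi\,dx$, and therefore, by \eqref{eq:teresa} and Proposition \ref{nino}, $\langle\Phi([u]_{\mathscr{D}}),\varphi\rangle=\lim_{\lambda\rightarrow\Lambda}\int v_{\lambda}\varphi\,dx$. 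Thus $\Phi([u]_{\mathscr{D}})$ is the weak-$*$ limit (in the sense of the $\Lambda$-limit into the Alexandrov compactification) of the ordinary distributions $T_{\lambda}:=v_{\lambda}\in L^{1}(\Omega)\subset\mathscr{D}'(\Omega)$, and boundedness of $[u]_{\mathscr{D}}$ says exactly that this limit is finite on every test function. Continuity of the limit functional is then the heart of the matter, and I would deduce it from barrelledness of $\mathscr{D}(\Omega)$ together with the weak-$*$ sequential completeness of $\mathscr{D}'(\Omega)$, reducing the net to a sequence by the diagonal extraction of Proposition \ref{prop:Ass} applied over a countable dense set of test functions. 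I expect \textbf{this continuity step to be the main obstacle}: the finiteness hypothesis is only pointwise in $\varphi$, so passing from ``finite on each $\varphi$'' to an equicontinuous (hence continuous) limit is where the argument must be made carefully, and where the boundedness hypothesis is really consumed.

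For surjectivity I would run the standard ultrafunction approximation. Given $T\in\mathscr{D}'(\Omega)$, for each $\lambda\in\mathfrak{L}$ the set $\lambda\cap\mathscr{D}(\Omega)$ is finite, and since smooth functions are weak-$*$ dense in $\mathscr{D}'(\Omega)$ I can choose $f_{\lambda}\in V(\Omega)$ with $\left|\int f_{\lambda}\varphi\,dx-\langle T,\varphi\rangle\right|<1/|\lambda|$ for every $\varphi\in\lambda\cap\mathscr{D}(\Omega)$. Setting $u:=\bigl(\lim_{\lambda\uparrow\Lambda}f_{\lambda}\bigr)^{\circ}\in V^{\circ}(\Omega)$, for any fixed $\varphi$ one has $\varphi\in\lambda$ eventually, hence $\int f_{\lambda}\varphi\,dx\rightarrow\langle T,\varphi\rangle$ along $\Lambda$ and so $\sqint u\,\varphi^{*}\,dx\sim\langle T,\varphi\rangle$. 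This simultaneously shows that $[u]_{\mathscr{D}}$ is bounded and that $\Phi([u]_{\mathscr{D}})=T$, which gives surjectivity. Combined with the previous paragraph, $\Phi$ is a linear bijection, i.e. a linear isomorphism, which is the claim; note that the assignment $T\mapsto[u]_{\mathscr{D}}$ built here furnishes an explicit two-sided inverse.
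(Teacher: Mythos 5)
The routine parts of your plan (well-definedness via the ideal $N$, linearity, injectivity) are correct, and since the paper itself gives no argument for this theorem beyond the citation of \cite{algebra}, there is no internal proof to compare against. The genuine gap is exactly the step you flag and do not close: that $\varphi\mapsto st\bigl(\sqint u\,\varphi^{*}dx\bigr)$ is \emph{continuous} on $\mathscr{D}(\Omega)$. The tools you name do not reach it. Banach--Steinhaus on the barrelled space $\mathscr{D}_K$ requires the family $T_{\lambda}(\varphi)=\int v_{\lambda}\varphi\,dx$ to be \emph{pointwise bounded}, whereas the hypothesis ``$[u]$ is bounded'' only says that the limit of $T_{\lambda}(\varphi)$ along the ultrafilter is finite for each fixed $\varphi$; off a qualified set the net $T_{\lambda}(\varphi)$ is completely uncontrolled, so no equicontinuity follows. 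Likewise, the extraction of Proposition \ref{prop:Ass} over a countable dense set $D\subset\mathscr{D}(\Omega)$ produces a sequence $T_{\lambda_n}$ converging only at the points of $D$; without equicontinuity you can neither extend this convergence to all of $\mathscr{D}(\Omega)$ nor invoke weak-$*$ sequential completeness of $\mathscr{D}'(\Omega)$, and even if you could, the continuous limit so obtained would only be known to agree with your functional on $D$.

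Your own surjectivity argument shows why this cannot be repaired softly: it never uses continuity of $T$. For each $\lambda$ one can interpolate the finitely many prescribed values $T(\varphi)$, $\varphi\in\lambda\cap\mathscr{D}(\Omega)$, \emph{exactly} by some $f_{\lambda}\in\mathrm{span}\bigl(\lambda\cap\mathscr{D}(\Omega)\bigr)\subseteq V_{\lambda}$ (invert the Gram matrix of a linearly independent subfamily; linearity of $T$ gives the compatibility on dependent ones). Hence the identical construction assigns to an \emph{arbitrary} element $L$ of the algebraic dual of $\mathscr{D}(\Omega)$ a class $[u]\in\mathscr{D}'_{GB}(\Omega)$ with $st\bigl(\sqint u\,\varphi^{*}dx\bigr)=L(\varphi)$ for all $\varphi$. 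Since the algebraic dual strictly contains $\mathscr{D}'(\Omega)$, the two halves of your proof are in direct tension: if surjectivity works as you wrote it, the range of $\Phi$ is not contained in $\mathscr{D}'(\Omega)$. Resolving this requires input beyond the definitions reproduced in this paper --- either the finer structure of the specific space $V_{\Lambda}$ built in \cite{BBG} and the precise formulation in \cite{algebra}, or a strengthened notion of boundedness --- so the continuity step is not merely ``the main obstacle'' but one that your proposed toolkit provably cannot overcome. A further minor point: to get $\lim_{\lambda\uparrow\Lambda}f_{\lambda}\in V_{\Lambda}(\Omega)$ you must choose $f_{\lambda}\in V_{\lambda}$, not merely $f_{\lambda}\in V(\Omega)$; the interpolation above fixes this, but the density argument as you stated it does not.
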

\begin{proof}
For the proof see e.g. \cite{algebra}.
\end{proof}
From now on we will identify the spaces $\mathfrak{\mathscr{D}}_{GB}^{\prime}(\Omega)$
and $\mathscr{D}^{\prime}(\Omega);$ so, we will identify $\left[u\right]_{\mathscr{D}}$
with $\Phi\left(\left[u\right]_{\mathscr{D}}\right)$ and we will
write $\left[u\right]_{\mathscr{D}}\in\mathfrak{\mathscr{D}}^{\prime}(\Omega)$
and 
\[
\left\langle \left[u\right]_{\mathscr{D}},\varphi\right\rangle _{\mathscr{D}(\Omega)}:=\langle\Phi[u]_{\mathscr{D}},\varphi\rangle=st\left(\sqint u\,\varphi^{\ast}\,dx\right).
\]

If $f\in C_{comp}^{0}(\Omega)$ and $f^{\ast}\in\left[u\right]_{\mathfrak{\mathscr{D}}}$,
then $\forall\varphi\in\mathfrak{\mathscr{D}}(\Omega)$,
\[
\left\langle \left[u\right]_{\mathfrak{\mathscr{D}}},\varphi\right\rangle _{\mathfrak{\mathscr{D}}(\Omega)}=st\left(\int^{\ast}u\,\varphi^{\ast}\ dx\right)=st\left(\int^{\ast}f^{\ast}\varphi^{\ast}dx\right)=\int f\varphi\,dx.
\]

\begin{rem}\rm
\label{rem:Schw}The set $V^{\circ}(\Omega)$ is an algebra
which extends the algebra of continuous functions $C^{0}(\mathbb{R}^{N})$.
If we identify a tempered distribution\footnote{We recall that, by a well known theorem of Schwartz, any tempered distribution
can be represented as $\partial^{m}f$ where $m$ is a multi-index
and $f$ is a continuous function.} $T=\partial^{m}f$ with the ultrafunction $D^{m}f^{\circ}$, we have
that the set of tempered distributions $\mathscr{S}'$ is contained
in $V^{\circ}(\Omega)$. However the Schwartz impossibility
theorem is not violated as $(V^{\circ}(\Omega),+,\,\cdot\,,\,D)$
is not a differential algebra since the Leibnitz rule does not hold
for some pairs of ultrafunctions. See also \cite{algebra}.
\end{rem}

\section{Properties of ultrafunction solutions}\label{General theorems}

The problems that we want to study with ultrafunctions have the following form: minimize a given functional $J$ on $V(\Omega)$ subjected to certain restrictions (e.g., some boundary constrictions, or a minimization on a proper vector subspace of $V(\Omega)$). This kind of problems can be studied in ultrafunctions theory by means of a modification of the Faedo-Galerkin method, based on standard approximations by finite dimensional spaces. The following is a (maybe even too) general formulation of this idea.

\begin{thm}\label{general existence}
	Let $W(\Omega)\neq\emptyset$ be a vector subspace of $V(\Omega)$. Let 
	\[\mathcal{F}=\{f:\,\,V(\Omega)\rightarrow\mathbb{R}\mid\forall E \text{ finite dimensional vector subspace of } W(\Omega)\ \exists u\in E \ f(u)=\min_{v\in E} f(v)\}.\]
	Then every $F\in\mathcal{F}^{\ast}$ has a minimizer in $W_{\Lambda}(\Omega)$.
\end{thm}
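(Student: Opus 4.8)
The plan is to run a hyperfinite version of the Faedo--Galerkin method: solve the minimization on each finite dimensional piece of $W(\Omega)$, and then glue the finite dimensional minimizers together by taking a $\Lambda$-limit. First I would unwind the two $\Lambda$-limit descriptions involved. Since $F\in\mathcal{F}^{\ast}$, by the definition of natural extension we may write
\[
F=\lim_{\lambda\uparrow\Lambda}f_{\lambda},\qquad f_{\lambda}\in\mathcal{F}\ \ (\lambda\in\mathfrak{L}),
\]
so that $F:V^{\ast}(\Omega)\rightarrow\mathbb{R}^{\ast}$ is the internal function whose value on any $\xi$ is computed, as in the definition of internal function, by $F(\xi)=\lim_{\lambda\uparrow\Lambda}f_{\lambda}(\eta(\lambda))$ for any net $\eta(\lambda)\in V(\Omega)$ with $\lim_{\lambda\uparrow\Lambda}\eta(\lambda)=\xi$. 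On the other side, $W_{\Lambda}(\Omega)$ is, in analogy with the construction of $V_{\Lambda}(\mathbb{R}^{N})$, the $\Lambda$-limit of the finite dimensional spaces $W_{\lambda}:=\mathrm{span}(\lambda\cap W(\Omega))\subseteq W(\Omega)$, i.e. $W_{\Lambda}(\Omega)=\lim_{\lambda\uparrow\Lambda}W_{\lambda}$; thus, by the inductive definition of the $\Lambda$-limit of sets, every $w\in W_{\Lambda}(\Omega)$ admits a representing net $w_{\lambda}\in W_{\lambda}$ with $w=\lim_{\lambda\uparrow\Lambda}w_{\lambda}$.

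Next I would produce the candidate minimizer. For each $\lambda\in\mathfrak{L}$ the space $W_{\lambda}$ is a finite dimensional vector subspace of $W(\Omega)$ and $f_{\lambda}\in\mathcal{F}$; hence, by the very definition of the class $\mathcal{F}$, there is $u_{\lambda}\in W_{\lambda}$ with
\[
f_{\lambda}(u_{\lambda})=\min_{v\in W_{\lambda}}f_{\lambda}(v).
\]
Setting
\[
u:=\lim_{\lambda\uparrow\Lambda}u_{\lambda}\in W_{\Lambda}(\Omega),
\]
I claim that $u$ minimizes $F$ over $W_{\Lambda}(\Omega)$. To verify this, fix an arbitrary $w\in W_{\Lambda}(\Omega)$ and choose a representing net $w_{\lambda}\in W_{\lambda}$ as above. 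For every $\lambda$ we have $w_{\lambda}\in W_{\lambda}$, while $u_{\lambda}$ minimizes $f_{\lambda}$ on $W_{\lambda}$, so
\[
f_{\lambda}(u_{\lambda})\leq f_{\lambda}(w_{\lambda}).
\]
Since the $\Lambda$-limit of real nets is order preserving (an inequality holding on a qualified set passes to the hyperreal limits), and since by the definition of the internal function $F$ one has $F(u)=\lim_{\lambda\uparrow\Lambda}f_{\lambda}(u_{\lambda})$ and $F(w)=\lim_{\lambda\uparrow\Lambda}f_{\lambda}(w_{\lambda})$, passing to the $\Lambda$-limit yields $F(u)\leq F(w)$. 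As $w\in W_{\Lambda}(\Omega)$ was arbitrary, $u$ is the sought minimizer.

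The steps that require care here are structural rather than analytic: one must make sure that $W_{\Lambda}(\Omega)$ is exactly the $\Lambda$-limit of the finite dimensional approximants $W_{\lambda}$, so that every competitor $w$ genuinely possesses a net representative valued in the $W_{\lambda}$'s, and that the evaluation $F(u)=\lim_{\lambda\uparrow\Lambda}f_{\lambda}(u_{\lambda})$ is legitimate, i.e.\ independent of the representing net chosen to describe $u$ — this last point is precisely the well-posedness of internal functions already embedded in the $\Lambda$-limit machinery. Once these identifications are in place the argument becomes purely order-theoretic: minimality is inherited level by level on each finite dimensional slice and survives the passage to the limit because the $\Lambda$-limit preserves $\leq$. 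I therefore expect the only genuine obstacle to be the bookkeeping that matches the hyperfinite object $W_{\Lambda}(\Omega)$ with the family $\{W_{\lambda}\}$ used to solve the finite dimensional subproblems.
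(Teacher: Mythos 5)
Your proof is correct and follows essentially the same route as the paper's: take $f_\lambda\in\mathcal F$ with $F=\lim_{\lambda\uparrow\Lambda}f_\lambda$, minimize each $f_\lambda$ on $W_\lambda=\mathrm{span}(W\cap\lambda)$, and pass the pointwise inequality $f_\lambda(u_\lambda)\leq f_\lambda(v_\lambda)$ to the $\Lambda$-limit. In fact you state the final inequality in the correct direction ($F(u)\leq F(v)$), whereas the paper's displayed proof has it reversed by an evident typo, and your remarks on well-posedness of $F(u)=\lim_{\lambda\uparrow\Lambda}f_\lambda(u_\lambda)$ only make explicit what the paper leaves implicit.
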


\begin{proof}
	Let $F=\lim_{\lambda\uparrow\Lambda}f_{\lambda}$, with $f_{\lambda}\in\mathcal{F}$
	for every $\lambda\in\mathfrak{L}$. By hypothesis, for every $\lambda\in\mathfrak{L}$
	there exists $u_{\lambda}\in W_{\lambda}:=Span(W\cap\lambda)$ that minimizes $f_{\lambda}$ on $W_{\lambda}$.
	Then $u=\lim_{\lambda\uparrow\Lambda}u_{\lambda}$	minimizes $F$ on $\lim_{\lambda\uparrow\Lambda}W_{\lambda}=W_{\Lambda}$ as, if $v=\lim_{\lambda\uparrow\Lambda} v_{\lambda}\in W_{\lambda}(\Omega)$, then for every $\lambda\in\mathfrak{L}$ we have that $f_{\lambda}\left( v_{\lambda} \right)\leq f_{\lambda} \left(u_{\lambda}\right)$, hence 
	
	\[ F(v)=\lim_{\lambda\uparrow\Lambda} f_{\lambda}\left( v_{\lambda} \right) \leq \lim_{\lambda\uparrow\Lambda} f_{\lambda}\left( u_{\lambda} \right) = F(u). \qedhere\]
\end{proof}

For applications, the following particular case of Theorem \ref{general existence} is particularly relevant:

\begin{cor} Let $f(\xi,u,x)$ be cohercive in $\xi$ on every finite dimensional subspace of $V(\Omega)$ and for every $x\in\Omega$. Let $F(u):=\sqint f(\nabla u, u, x)dx$. Then $F^{\circ}$ has a min on $V_{\Lambda}$. \end{cor}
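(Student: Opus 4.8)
The plan is to derive the corollary as an application of Theorem \ref{general existence}, so the core task is to identify the hypotheses of that theorem in the present setting. First I would set $W(\Omega)=V(\Omega)$ and consider the functional $f(u):=\int f(\nabla u, u, x)\,dx$ defined on $V(\Omega)$; the goal is to show that $f$ belongs to the class $\mathcal{F}$ so that $F^{\circ}=\lim_{\lambda\uparrow\Lambda} f$ (the natural extension of the constant net equal to $f$) falls under Theorem \ref{general existence} and thereby acquires a minimizer in $V_{\Lambda}(\Omega)$.

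The heart of the argument is the verification of the defining property of $\mathcal{F}$: for every finite dimensional subspace $E\subset V(\Omega)$, the restriction $f|_E$ attains its minimum on $E$. Here I would invoke the coercivity hypothesis. Fix such an $E$; since $E$ is finite dimensional, it is linearly homeomorphic to $\mathbb{R}^k$, and on $E$ the map $u\mapsto \nabla u$ is a continuous linear map, so coercivity in $\xi$ on $E$ translates into coercivity of $f|_E$ as a function on $\mathbb{R}^k$. Coercivity forces the sublevel sets $\{u\in E : f(u)\leq c\}$ to be bounded, hence (being closed by continuity of $f|_E$, which I would note follows from the regularity of the integrand) compact in the finite dimensional space $E$. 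A continuous function on a nonempty compact set attains its minimum, so there exists $u\in E$ with $f(u)=\min_{v\in E} f(v)$. This is exactly the membership condition $f\in\mathcal{F}$.

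Once $f\in\mathcal{F}$ is established, the natural extension $f^{\ast}$ lies in $\mathcal{F}^{\ast}$, and Theorem \ref{general existence} applied with $W(\Omega)=V(\Omega)$ yields a minimizer of $f^{\ast}$ in $V_{\Lambda}(\Omega)$. Transporting this through the isomorphism ${}^{\circ}:V_{\Lambda}(\Omega)\to V^{\circ}(\Omega)$ and unwinding the definition of the pointwise integral via \eqref{eq:bimba}, which identifies $\sqint f(\nabla u, u, x)\,dx$ with the $\Lambda$-limit of $\int f(\nabla u_\lambda, u_\lambda, x)\,dx$, gives the minimizer of $F^{\circ}$ on $V^{\circ}(\Omega)=V_{\Lambda}^{\circ}$, as claimed.

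The step I expect to be the main obstacle is the coercivity-to-compactness passage, and in particular pinning down exactly what ``cohercive in $\xi$ on every finite dimensional subspace'' is meant to assert. The delicate point is that coercivity is stated pointwise in the integrand variable $\xi$, whereas what is actually needed is coercivity of the \emph{integrated} functional $f|_E$ on the finite dimensional space; bridging these requires either a uniformity assumption (e.g. a lower bound $f(\xi,u,x)\geq \alpha(|\xi|)-\beta(x)$ with $\alpha$ superlinear and $\beta\in L^1$) or a careful reading of the intended hypothesis. I would flag that some such integrable-minorant condition is implicitly required to guarantee both the lower semicontinuity and the properness of $f|_E$; absent it, the sublevel sets need not be bounded and the minimum need not be attained. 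Modulo that clarification, the remainder is the routine extremal-value argument on a finite dimensional space together with the transfer of the constant net through the $\Lambda$-limit.
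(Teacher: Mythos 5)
Your proposal follows exactly the paper's route: the paper's entire proof is ``Just notice that $F\in\mathcal{F}$, in the notations of Theorem \ref{general existence},'' and you simply spell out the standard finite-dimensional argument (coercivity gives bounded, hence compact, sublevel sets, hence attainment of the minimum on each finite dimensional subspace) that justifies that one-line claim. Your flag about the gap between pointwise coercivity of the integrand and coercivity of the integrated functional is a fair criticism of the statement's phrasing, but the paper glosses over it in the same way, so there is no divergence in approach.
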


\begin{proof} Just notice that $F\in\mathcal{F}$, in the notations of Theorem \ref{general existence}.\end{proof}

Theorem \ref{general existence} provides a general existence results. However, such a general result poses two questions: the first is how wild such generalized solutions can be; the second is if this method produces new generalized solutions for problems that already have classical ones.

The answer to these questions depends on the problem that is studied. However, regarding the second question we have the following result, which strengthens Theorem \ref{general existence}:

\begin{thm} \label{coherence} Let $F:V_{\Lambda}(\Omega)\rightarrow \mathbb{R}^{\ast}$, $F=\lim_{\lambda\uparrow\Lambda} F_{\lambda}$. For every $\lambda\in\mathfrak{L}$ let 

\[ M_{\lambda}:=\left\{u\in V_{\lambda}(\Omega)\mid F_{\lambda}(u)=\min_{v\in V_{\lambda}}F_{\lambda}(v)\right\}. \] 

Assume that $\lim_{\lambda\uparrow\Lambda} M_{\lambda}\neq\emptyset$. Then 
\[ M_{\Lambda}:=\left\{u\in V_{\Lambda}(\Omega)\mid  F(u)=\min_{v\in V_{\Lambda}}F(v)\right\}=\lim_{\lambda\uparrow\Lambda} M_{\lambda}\neq \emptyset. \] 
\end{thm}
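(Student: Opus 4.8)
The plan is to prove the asserted identity $M_{\Lambda}=\lim_{\lambda\uparrow\Lambda} M_{\lambda}$ by a double inclusion, after recording three preliminary facts. First, $V_{\Lambda}(\Omega)=\lim_{\lambda\uparrow\Lambda}V_{\lambda}(\Omega)$, so that every $u\in V_{\Lambda}(\Omega)$ is the $\Lambda$-limit of a net $u_{\lambda}\in V_{\lambda}(\Omega)$. Second, since $F=\lim_{\lambda\uparrow\Lambda}F_{\lambda}$ is an internal function, for any such net one has $F(u)=\lim_{\lambda\uparrow\Lambda}F_{\lambda}(u_{\lambda})$ straight from the definition of the $\Lambda$-limit of a net of functions. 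Third, the hypothesis $\lim_{\lambda\uparrow\Lambda}M_{\lambda}\neq\emptyset$ furnishes a net of minimizers $u^{*}_{\lambda}\in M_{\lambda}$; in particular $M_{\lambda}\neq\emptyset$, and hence the minimum value $m_{\lambda}:=\min_{v\in V_{\lambda}}F_{\lambda}(v)$ is defined, for every $\lambda$ in a fixed qualified set, which I will assume to be all of $\mathfrak{L}$ up to working modulo that qualified set.

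For the inclusion $\lim_{\lambda\uparrow\Lambda}M_{\lambda}\subseteq M_{\Lambda}$ I would simply repeat the argument of Theorem \ref{general existence}. If $u=\lim_{\lambda\uparrow\Lambda}u_{\lambda}$ with $u_{\lambda}\in M_{\lambda}$, then $u\in V_{\Lambda}(\Omega)$, and for an arbitrary competitor $v=\lim_{\lambda\uparrow\Lambda}v_{\lambda}\in V_{\Lambda}(\Omega)$ the pointwise minimality $F_{\lambda}(u_{\lambda})=m_{\lambda}\leq F_{\lambda}(v_{\lambda})$ passes to the $\Lambda$-limit, since $\Lambda$-limits of real nets preserve the order, yielding $F(u)\leq F(v)$. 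Thus $u\in M_{\Lambda}$. Applying this to the net $u^{*}_{\lambda}$ already shows $M_{\Lambda}\neq\emptyset$ and, moreover, that $\min_{v\in V_{\Lambda}}F(v)=F(u^{*})=\lim_{\lambda\uparrow\Lambda}m_{\lambda}$, an identity I will use below.

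The reverse inclusion $M_{\Lambda}\subseteq\lim_{\lambda\uparrow\Lambda}M_{\lambda}$ is the main obstacle. Let $u\in M_{\Lambda}$ and write $u=\lim_{\lambda\uparrow\Lambda}u_{\lambda}$ with $u_{\lambda}\in V_{\lambda}(\Omega)$, so that $F(u)=\lim_{\lambda\uparrow\Lambda}F_{\lambda}(u_{\lambda})$. Since $u$ minimizes $F$ we have $F(u)=\min_{v\in V_{\Lambda}}F(v)=\lim_{\lambda\uparrow\Lambda}m_{\lambda}$ by the previous paragraph, hence the two real nets $\lambda\mapsto F_{\lambda}(u_{\lambda})$ and $\lambda\mapsto m_{\lambda}$ have the \emph{same} $\Lambda$-limit in $\mathbb{R}^{*}$. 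The decisive step is to read this equality in the ultrapower model $\mathbb{R}^{*}\cong\mathbb{R}^{\mathfrak{L}}/\mathfrak{I}$ recalled in the Remark following the construction of $\mathbb{R}^{*}$: two real nets have equal $\Lambda$-limit precisely when they coincide eventually, i.e. the set $Q:=\{\lambda\in\mathfrak{L}\mid F_{\lambda}(u_{\lambda})=m_{\lambda}\}$ is qualified. On $Q$ one has $F_{\lambda}(u_{\lambda})=m_{\lambda}$, that is $u_{\lambda}\in M_{\lambda}$. Replacing $u_{\lambda}$ off $Q$ by a fixed minimizer in $M_{\lambda}$ (legitimate because modifying a net outside a qualified set does not change its $\Lambda$-limit) produces a net $\tilde{u}_{\lambda}\in M_{\lambda}$ with $\lim_{\lambda\uparrow\Lambda}\tilde{u}_{\lambda}=u$, whence $u\in\lim_{\lambda\uparrow\Lambda}M_{\lambda}$.

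I expect the genuinely delicate point to be this last passage through the ultrafilter: the lower-bound inequality in the first inclusion is free from the order-preservation of $\Lambda$-limits, but converting an equality of two hyperreals into the statement that their defining nets agree on a qualified set is exactly where the quotient structure $\mathbb{R}^{\mathfrak{L}}/\mathfrak{I}$ must be invoked explicitly. The only other bookkeeping concerns those $\lambda$ with $M_{\lambda}=\emptyset$, which are harmlessly absorbed into the qualified set on which the hypothesis guarantees minimizers.
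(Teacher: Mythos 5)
Your proof is correct and follows essentially the same route as the paper's: both inclusions hinge on reflecting an order (or equality) relation between $\Lambda$-limits of real nets back to a qualified set of indices, the only cosmetic difference being that for $M_{\Lambda}\subseteq\lim_{\lambda\uparrow\Lambda}M_{\lambda}$ the paper transfers the inequality $F(v)\leq F(u)$ against a fixed $u\in\lim_{\lambda\uparrow\Lambda}M_{\lambda}$, whereas you transfer the equality $F(u)=\lim_{\lambda\uparrow\Lambda}m_{\lambda}$ via the ultrapower quotient. Your explicit replacement of the net off the qualified set by genuine minimizers is a detail the paper leaves implicit, and is a welcome clarification rather than a deviation.
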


\begin{proof} $M_{\Lambda}\subseteq\lim_{\lambda\uparrow\Lambda} M_{\lambda}$: Let $v=\lim_{\lambda\uparrow\Lambda} v_{\lambda}\in M_{\Lambda}$, and let $u=\lim_{\lambda\uparrow\Lambda} u_{\lambda}\in\lim_{\lambda\uparrow\Lambda} M_{\lambda}$. As $F(v)\leq F(u)$, there is a qualified set $Q$ such that for every $\lambda\in Q$ $F_{\lambda}\left(v_{\lambda}\right)\leq F\left(u_{\lambda}\right)$. But then $v_{\lambda}\in M_{\lambda}$ for every $\lambda\in Q$, hence $v=\lim_{\lambda\uparrow\Lambda}v_{\lambda}\in\lim_{\lambda\uparrow\Lambda} M_{\lambda}$.
	
	$M_{\Lambda}\supseteq\lim_{\lambda\uparrow\Lambda} M_{\lambda}$: Let $u=\lim_{\lambda\uparrow\Lambda} u_{\lambda}\in\lim_{\lambda\uparrow\Lambda} M_{\lambda}$. Let $v=\lim_{\lambda\uparrow\Lambda} v_{\lambda}\in V_{\Lambda}(\Omega)$. Let 
	\[Q=\{\lambda\in\mathfrak{L}\mid u_{\lambda}\in M_{\lambda}\}.\] Then $Q$ is qualified and, for every $\lambda\in Q$, $F_{\lambda}\left(u_{\lambda}\right)\leq F_{\lambda}\left(v_{\lambda}\right)$. Therefore $F(u)\leq F(v)$, and so $u\in M_{\Lambda}$.
\end{proof}

The following easy consequences of Theorem \ref{coherence} hold:
 
\begin{cor}In the same notations of Theorem \ref{coherence}, let us now assume that there exists $k\in\mathbb{N}$ such that $|M_{\lambda}|\leq k$ for every $\lambda\in\mathfrak{L}$. Then $|M_{\Lambda}|\leq k$.\end{cor}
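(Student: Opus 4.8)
The plan is to leverage Theorem~\ref{coherence}, which already identifies $M_{\Lambda}$ with $\lim_{\lambda\uparrow\Lambda} M_{\lambda}$, and then to show that the hyperfinite cardinality of this $\Lambda$-limit is controlled by the uniform bound $k$ on the cardinalities $|M_{\lambda}|$. First I would invoke Theorem~\ref{coherence} to write
\[
M_{\Lambda}=\lim_{\lambda\uparrow\Lambda} M_{\lambda},
\]
so that $M_{\Lambda}$ is the $\Lambda$-limit of a net of finite sets, each of cardinality at most $k$. The key observation is that cardinality is preserved under $\Lambda$-limits in the following hyperfinite sense: the internal cardinality $|M_{\Lambda}|\in\mathbb{N}^{\ast}$ equals $\lim_{\lambda\uparrow\Lambda}|M_{\lambda}|$, exactly as the hyperfinite sums and counts constructed in Section~\ref{HE}.

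Concretely, I would proceed as follows. For each $\lambda$, write $M_{\lambda}=\{u_{1}(\lambda),\dots,u_{\beta(\lambda)}(\lambda)\}$ with $\beta(\lambda)=|M_{\lambda}|\leq k$. Set $\beta=\lim_{\lambda\uparrow\Lambda}\beta(\lambda)\in\mathbb{N}^{\ast}$; since $\beta(\lambda)\leq k$ for every $\lambda$ and $k$ is standard, the $\Lambda$-limit of this bounded net of natural numbers is itself a natural number with $\beta\leq k$. Indeed, because only finitely many values $\{0,1,\dots,k\}$ occur, by the ultrafilter property one of the sets $\{\lambda\mid\beta(\lambda)=j\}$ is qualified for some fixed $j\leq k$, whence $\beta=j\leq k$ and in particular $\beta\in\mathbb{N}$. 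Then every element of $M_{\Lambda}=\lim_{\lambda\uparrow\Lambda} M_{\lambda}$ is of the form $\lim_{\lambda\uparrow\Lambda} u_{i(\lambda)}(\lambda)$ for indices $i(\lambda)\in\{1,\dots,\beta(\lambda)\}$, so $M_{\Lambda}$ is covered by the $\beta\leq k$ hyperfinite enumeration slots $\lim_{\lambda\uparrow\Lambda} u_{j}(\lambda)$, giving $|M_{\Lambda}|\leq k$.

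The main obstacle, and the only point requiring care, is ensuring that the enumeration of the finite sets $M_{\lambda}$ can be organized coherently across $\lambda$ so that distinct elements of $M_{\Lambda}$ come from distinct enumeration slots; that is, I must confirm that $\lim_{\lambda\uparrow\Lambda} u_{i}(\lambda)=\lim_{\lambda\uparrow\Lambda} u_{j}(\lambda)$ forces $i=j$ eventually (after collapsing the bound to a fixed $j$ on a qualified set via the ultrafilter property). This is precisely the statement that the $\Lambda$-limit commutes with taking cardinality for uniformly bounded nets of finite sets, which follows directly from the definition of the $\Lambda$-limit of sets (Definition for $V_{n}(\mathbb{R})$) together with the ultrafilter dichotomy: a single element $\xi\in M_{\Lambda}$ is the limit of some $\psi(\lambda)\in M_{\lambda}$, and on the qualified set where $|M_{\lambda}|=j$ one may index the (at most $j$) elements so that the counting is exact. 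Assembling these pieces yields $|M_{\Lambda}|=\lim_{\lambda\uparrow\Lambda}|M_{\lambda}|\leq k$, which is the desired conclusion.
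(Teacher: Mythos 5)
Your proof is correct and follows essentially the same route as the paper: the paper simply invokes Theorem \ref{coherence} to identify $M_{\Lambda}$ with $\lim_{\lambda\uparrow\Lambda}M_{\lambda}$ and states that the uniform bound $|M_{\lambda}|\leq k$ ``trivially entails'' $|\lim_{\lambda\uparrow\Lambda}M_{\lambda}|\leq k$, which is exactly the enumeration-slot argument you spell out in detail. Your extra worry about distinct slots giving distinct limits is unnecessary for the upper bound (a surjection from at most $k$ slots onto $M_{\Lambda}$ already suffices), but it does no harm.
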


\begin{proof} This holds as the hypothesis on $|M_{\lambda}|$ trivially entails that $|\lim_{\lambda\uparrow\Lambda} M_{\lambda}|\leq k$.\end{proof}

\begin{cor}\label{star are solutions} In the same notations of Theorem \ref{coherence}, let us now assume that $F=J^{\ast}$, where $J:V(\Omega)\rightarrow\mathbb{R}$. Let 
\[M:=\left\{v\in V(\Omega)\mid v=\min_{w\in V(\Omega)} J(w)\right\}.\]
Assume that $M\neq\emptyset$. Then the following facts are equivalent:
	
	\begin{enumerate}
		\item $u$ is a minimizer of $F:V_{\Lambda}(\Omega)\rightarrow\mathbb{R}^{\ast}$;
		\item $u\in M^{\ast}\cap V_{\Lambda}(\Omega)$.	
	\end{enumerate}
	
	In particular, if $u\in M$ then $u^{\ast}$ minimizes $F$. \end{cor}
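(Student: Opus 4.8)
The plan is to prove Corollary \ref{star are solutions} by reducing it to Theorem \ref{coherence}, which already characterizes the minimizers of $F = \lim_{\lambda\uparrow\Lambda} F_\lambda$ as the $\Lambda$-limit $\lim_{\lambda\uparrow\Lambda} M_\lambda$ of the level sets of minimizers. The key observation is that in the present setting $F = J^{\ast}$, so the approximating net is the \emph{constant} net $F_\lambda = J^{\ast}_{|V_\lambda}$ — more precisely, $F_\lambda$ is the restriction of $J$ to the finite dimensional space $V_\lambda = \mathrm{Span}(V(\Omega)\cap\lambda)$. Thus I would first identify the sets $M_\lambda = \{u\in V_\lambda(\Omega)\mid J(u) = \min_{v\in V_\lambda} J(v)\}$ and verify the hypothesis $\lim_{\lambda\uparrow\Lambda} M_\lambda\neq\emptyset$ of Theorem \ref{coherence}: this follows because each $V_\lambda$ is finite dimensional and a global minimizer $v\in M$ (which exists since $M\neq\emptyset$) can be used, or more carefully, because $M\neq\emptyset$ guarantees that for every $\lambda$ containing a global minimizer the minimum over $V_\lambda$ is attained at that point, so each $M_\lambda$ is nonempty and the $\Lambda$-limit is nonempty.

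Next I would establish the equivalence. For the implication (1) $\Rightarrow$ (2), suppose $u$ minimizes $F$ on $V_\Lambda(\Omega)$. By Theorem \ref{coherence} we have $u\in\lim_{\lambda\uparrow\Lambda} M_\lambda$, so $u = \lim_{\lambda\uparrow\Lambda} u_\lambda$ with $u_\lambda\in M_\lambda$ on a qualified set. The point is that each $u_\lambda$ minimizes $J$ over $V_\lambda$, and since $M\neq\emptyset$ contains a genuine global minimizer $\bar v$, one compares $J(u_\lambda)$ with $J(\bar v)$: because $\bar v$ eventually lies in $V_\lambda$ (as $\lambda\uparrow\Lambda$ exhausts $V(\Omega)$), we get $J(u_\lambda)\leq J(\bar v) = \min_{V(\Omega)} J$, forcing $u_\lambda\in M$ eventually. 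Taking $\Lambda$-limits yields $u\in M^{\ast}$, and $u\in V_\Lambda(\Omega)$ by construction, giving (2). For (2) $\Rightarrow$ (1), if $u\in M^{\ast}\cap V_\Lambda(\Omega)$ then $u = \lim_{\lambda\uparrow\Lambda} u_\lambda$ with $u_\lambda\in M$ eventually, hence each such $u_\lambda$ minimizes $J$ globally and in particular over $V_\lambda$, so $u_\lambda\in M_\lambda$ eventually, placing $u$ in $\lim_{\lambda\uparrow\Lambda} M_\lambda = M_\Lambda$ by Theorem \ref{coherence}; thus $u$ minimizes $F$.

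The final assertion, that $u^{\ast}$ minimizes $F$ whenever $u\in M$, is then immediate: $u\in M$ gives $u^{\ast}\in M^{\ast}$, and since $u\in V(\Omega)\subseteq V_\Lambda(\Omega)$ implies $u^{\ast}\in V_\Lambda(\Omega)$, condition (2) holds for $u^{\ast}$, so (1) follows from the equivalence.

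The main obstacle I anticipate is the careful bookkeeping in the step $J(u_\lambda)\leq J(\bar v)$: one must ensure that a fixed global minimizer $\bar v\in M$ eventually belongs to $V_\lambda = \mathrm{Span}(V(\Omega)\cap\lambda)$, which requires that the fine ultrafilter property (specifically that $Q(\{\bar v\})$ is qualified) guarantees $\bar v\in\lambda$ eventually, so that $\bar v$ is an admissible competitor in the finite dimensional minimization defining $M_\lambda$. Once this exhaustion property is invoked correctly, the comparison $\min_{V_\lambda} J\leq J(\bar v) = \min_{V(\Omega)} J$ combined with the reverse inequality $\min_{V(\Omega)} J\leq \min_{V_\lambda} J$ (valid since $V_\lambda\subseteq V(\Omega)$) pins down $M_\lambda\subseteq M$ eventually, which is the crux of identifying $\lim_{\lambda\uparrow\Lambda} M_\lambda$ with $M^{\ast}\cap V_\Lambda(\Omega)$.
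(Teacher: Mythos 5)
Your proposal is correct and follows essentially the same route as the paper: both reduce to Theorem \ref{coherence} and use the fineness of the ultrafilter (the set of $\lambda$ containing a fixed global minimizer is qualified) to get $M_{\lambda}\subseteq M$ eventually, hence $\lim_{\lambda\uparrow\Lambda}M_{\lambda}\subseteq M^{\ast}\cap V_{\Lambda}(\Omega)$. The only divergence is in $(2)\Rightarrow(1)$, where the paper argues directly by transfer --- $u\in M^{\ast}$ already means $F(u)=\min_{v\in\left[V(\Omega)\right]^{\ast}}F(v)$ and $V_{\Lambda}(\Omega)\subseteq\left[V(\Omega)\right]^{\ast}$ --- whereas you route back through Theorem \ref{coherence}, which also works but tacitly requires that a single representing net can be chosen witnessing both $u\in M^{\ast}$ and $u\in V_{\Lambda}(\Omega)$ (true, since two nets with the same $\Lambda$-limit agree on a qualified set).
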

	
\begin{proof} $(1)\Rightarrow (2)$ Let $u\in M$. Let $Q(u):=\{\lambda\in \mathfrak{L}\mid u\in \lambda\}$. Then for every $\lambda\in Q(u)$ $v\in M_{\lambda}\Leftrightarrow J(v)=J(u) \Rightarrow v\in M$, hence $M_{\lambda}\subseteq M$ for every $\lambda\in Q(u)$, which is qualified, and so $\lim_{\lambda\uparrow\Lambda} M_{\lambda}\subseteq M^{\ast}\cap V_{\Lambda}$, and we conclude by Theorem \ref{coherence}.

$(2)\Rightarrow (1)$ By definition,
\[u\in M^{\ast}\Leftrightarrow F(u)=\min_{v\in \left[V(\Omega)\right]^{\ast}} F(v),\]
hence if $u\in M^{\ast}\cap V_{\lambda}(\Omega)$ it trivially holds that $u$ minimizes $F$.
\end{proof}

\begin{cor} In the same hypotheses and notations of Corollary \ref{coherence}, let us assume that $M=\{u_{1},\dots,u_{n}\}$ is finite. Then $v$ minimizes $F$ in $V_{\Lambda}(\Omega)$ if and only if there exists $u\in M$ such that $u^{\ast}=v$.
\end{cor}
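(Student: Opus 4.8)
The plan is to deduce this final corollary directly from Corollary \ref{star are solutions}, since the hypothesis $M=\{u_1,\dots,u_n\}$ finite is merely the special case in which the minimizer set of the standard functional $J$ consists of finitely many points. I would begin by invoking Corollary \ref{star are solutions}, whose equivalence $(1)\Leftrightarrow(2)$ tells us that $v$ minimizes $F=J^\ast$ on $V_\Lambda(\Omega)$ if and only if $v\in M^\ast\cap V_\Lambda(\Omega)$. Thus the entire task reduces to understanding the natural extension $M^\ast$ of a finite set.

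The key observation is that the natural extension of a finite set is that set itself, suitably starred elementwise: if $M=\{u_1,\dots,u_n\}$, then $M^\ast=\{u_1^\ast,\dots,u_n^\ast\}$. This follows from the $\Lambda$-limit construction of $M^\ast$ together with the fact that a $\Lambda$-limit of a net taking values in a finite set is eventually constant (as recorded in the remark preceding Definition \ref{def:St}, the $\Lambda$-limit lands in $X^\sigma$ precisely when the net is eventually constant, and over a finite index set of possible values one of the $n$ fibers must be qualified by the ultrafilter property). Hence any $\psi(\lambda)\in M$ with a $\Lambda$-limit in $M^\ast$ is eventually equal to some fixed $u_i$, giving $\lim_{\lambda\uparrow\Lambda}\psi(\lambda)=u_i^\ast$.

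With $M^\ast=\{u_1^\ast,\dots,u_n^\ast\}$ in hand, the conclusion is immediate. If $v$ minimizes $F$ on $V_\Lambda(\Omega)$, then by Corollary \ref{star are solutions} we have $v\in M^\ast\cap V_\Lambda(\Omega)\subseteq\{u_1^\ast,\dots,u_n^\ast\}$, so $v=u_i^\ast$ for some $u_i\in M$, which is the ``only if'' direction. Conversely, if $v=u^\ast$ for some $u\in M$, then the final sentence of Corollary \ref{star are solutions} (``if $u\in M$ then $u^\ast$ minimizes $F$'') gives that $v$ minimizes $F$, yielding the ``if'' direction.

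I expect the only genuine content to lie in the middle step, namely justifying $M^\ast=\{u_1^\ast,\dots,u_n^\ast\}$; everything else is a routine unwinding of definitions. This step is not hard but does require the eventual-constancy fact for $\Lambda$-limits over finite sets, so I would make sure to cite it explicitly. One should also confirm that the statement's hypotheses genuinely invoke Corollary \ref{star are solutions} rather than the corollary labeled \ref{coherence}; the phrase ``same hypotheses and notations of Corollary \ref{coherence}'' appears to be a cross-reference slip, since the assumption $F=J^\ast$ with $M$ the minimizer set of $J$ is introduced in Corollary \ref{star are solutions}. I would proceed on the reading that $F=J^\ast$ and that $M$ denotes the standard minimizer set, so that Corollary \ref{star are solutions} applies verbatim.
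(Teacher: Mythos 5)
Your proposal is correct and follows essentially the same route as the paper: reduce to Corollary \ref{star are solutions} and use that the natural extension of a finite set $M$ is $\{u^{\ast}\mid u\in M\}$, which lies in $V_{\Lambda}(\Omega)$. The paper states these two facts without proof, whereas you additionally justify the finite-set identity via eventual constancy of the net (a qualified fiber of the finite partition), which is a welcome but inessential elaboration.
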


\begin{proof} Just remember that $S^{\circ}=\{s^{\ast}|s\in S\}$ for every finite set $S$, and that $M^{\sigma}=\{u^{\ast}\mid u\in M\}\subseteq V(\Omega)\subseteq V_{\Lambda}(\Omega)$. \end{proof}

In general, one might not have minima, but minimization sequences could still exist. In this case, we have the following result (in which for every $\rho\in\mathbb{R}^{\ast}$ we set $st_{\mathbb{R}}(\rho)=-\infty$ if and only if $\rho$ is a negative infinite number).  Notice that in the following result we are not assuming the continuity of $J$ with respect to any topology on $V(\Omega)$, in general.

\begin{thm}\label{minimizing sequences}Let $V(\Omega)$ be a Banach space, let $J:V(\Omega)\rightarrow \mathbb{R}$ and let $\inf_{u\in V(\Omega)}J(u)=m\in \mathbb{R}\cup\{-\infty\}$. The following facts hold:
\begin{enumerate}
  \item[1.] $J^{\ast}(v)\geq m$ for every $v\in V_{\Lambda}(\Omega)$.
  \item[2.] There exists $v\in V_{\Lambda}(\Omega)$ such that $st_{\mathbb{R}}(J^{\ast}(v))=m$.
	\item[3.] If $v\in V_{\Lambda}(\Omega)$ is a minimum of $J^{\ast}:V_{\Lambda}(\Omega)\rightarrow \mathbb{R}^{\ast}$ then $J^{\ast}(v)\geq st_{\mathbb{R}}\left(J^{\ast}(v)\right)=m$.
	\item[4.] Let $\{u_{n}\}_{n\in\mathbb{N}}$ be a minimizing sequence that converges to $u\in V(\Omega)$ in some topology $\tau$. Then there exists $v\in V_{\Lambda}(\Omega)$ such that $st_{\tau}(v)=u$ and $J^{\ast}(v)\geq st_{\mathbb{R}}\left(J^{\ast}(v)\right)=m$. Moreover, if $w^{\circ}+\psi$ is the canonical splitting of $v$, then: 
	\begin{itemize}
		\item if $\tau$ is the topology of pointwise convergence, then $w=v$ and $w(x)=u(x)$ for every $x\in\Omega$;
		\item if $\tau$ is the topology of pointwise convergence a.e., then $w=v$ and $w(x)=u(x)$ a.e. in $x\in\Omega$;
		\item if $\tau$ is the topology of weak convergence, then $w(x)=u(x)$ for every $x\in\Omega$ and $\langle \psi, \varphi^{\ast}\rangle^{\ast}\sim 0$ for every $\varphi$ in the dual of $V(\Omega)$;
		\item if $\tau$ is the topology associated with a norm $||\cdot||$ and, moreover, $\{u_{n}\}_{n}$ converges pointwise to $u$, then $w=u$ and $||\psi||^{\ast}\sim 0$.
	\end{itemize}
	\item[5.] If all minimizing sequences of $J$ converge to $u\in V(\Omega)$ in some topology $\tau$ and $v$ is a minimum of $J^{\ast}:V_{\Lambda}(\Omega)\rightarrow\mathbb{R}^{\ast}$ then $st_{\tau}(v)=u$ and $J^{\ast}(v)\geq st_{\mathbb{R}}(J^{\ast}(v))=m$.
\end{enumerate}
 
\end{thm}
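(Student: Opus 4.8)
The plan is to prove the five statements essentially in order, leveraging the $\Lambda$-limit machinery from Section~\ref{Lambda}. The key conceptual point throughout is that $J^{\ast}$ is the $\Lambda$-limit of the constant net $\lambda\mapsto J$, so its values on internal points are controlled by the behaviour of $J$ on the approximating subspaces $V_{\lambda}(\Omega)=\mathrm{Span}(V(\Omega)\cap\lambda)$, which exhaust $V(\Omega)$ as $\lambda\uparrow\Lambda$.

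\medskip

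For statement (1), I would fix $v=\lim_{\lambda\uparrow\Lambda}v_{\lambda}\in V_{\Lambda}(\Omega)$ with each $v_{\lambda}\in V_{\lambda}(\Omega)\subseteq V(\Omega)$. Since $J(v_{\lambda})\geq m$ for every $\lambda$, passing to the $\Lambda$-limit gives $J^{\ast}(v)=\lim_{\lambda\uparrow\Lambda}J(v_{\lambda})\geq m$ (the inequality is preserved because $m\in\mathbb{R}\cup\{-\infty\}$ is a standard lower bound, and the $\Lambda$-limit of a net bounded below by $m$ is $\geq m$). For statement (2), I would use that $m=\inf J$ to pick, for each $\lambda$, an element $v_{\lambda}\in V(\Omega)\cap\lambda$ (nonempty by fineness of $\mathcal{U}$, as the standard elements eventually enter $\lambda$) with $J(v_{\lambda})$ within, say, $1/|\lambda|$ of $m$ — or simply a standard minimizing sequence reindexed along $\mathfrak{L}$. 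Setting $v=\lim_{\lambda\uparrow\Lambda}v_{\lambda}$, one gets $J^{\ast}(v)\sim m$, hence $st_{\mathbb{R}}(J^{\ast}(v))=m$. Statement (3) is immediate from (1) and (2): a minimum $v$ satisfies $m\leq J^{\ast}(v)\leq J^{\ast}(v_0)\sim m$ for the $v_0$ of statement (2), forcing $st_{\mathbb{R}}(J^{\ast}(v))=m$.

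\medskip

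The substance lies in statement (4). Given a minimizing sequence $\{u_n\}$ converging to $u$ in $\tau$, I would reindex it as a net along $\mathfrak{L}$ — concretely set $v_{\lambda}=u_{|\lambda|}$ (as in Prop.~\ref{prop:carola}) and $v=(\lim_{\lambda\uparrow\Lambda}v_{\lambda})^{\circ}$, noting $v\in V_{\Lambda}(\Omega)$. Since $\{u_n\}$ is minimizing, $J^{\ast}(v)\sim m$ so $st_{\mathbb{R}}(J^{\ast}(v))=m$; and since $u_{|\lambda|}\to u$ in $\tau$, the very Definition~\ref{def:St} of $St_{\tau}$ yields $st_{\tau}(v)=u$. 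The four topology-specific clauses then follow by invoking the splitting results already in hand: the pointwise and a.e.\ cases give $\psi\equiv 0$ (resp.\ a.e.) directly because $v_{\lambda}(x)\to u(x)$ forces $v(x)\sim u^{\circ}(x)$ at each grid point, so $w=u$ with zero singular part; the weak-convergence and norm-convergence clauses are \emph{exactly} the two conclusions of Proposition~\ref{prop:carola}, which I would cite verbatim (weak convergence gives $\sqint\psi\varphi^{\ast}\,dx\sim 0$, and norm convergence together with pointwise convergence gives $\|\psi\|\sim 0$).

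\medskip

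For statement (5), I would argue that if every minimizing sequence converges to the same $u$ in $\tau$, then any minimum $v=\lim_{\lambda\uparrow\Lambda}v_{\lambda}$ of $J^{\ast}$ must have $st_{\tau}(v)=u$: by statement (3) its value is infinitely close to $m$, so the net $\{v_{\lambda}\}$ is (after passing to a qualified set) a minimizing sequence in the standard sense, whence its $\tau$-limit — which computes $st_{\tau}(v)$ via Definition~\ref{def:St} — equals $u$ by hypothesis. \textbf{The main obstacle} I anticipate is precisely this last extraction: one must verify that the components $v_{\lambda}$ of an internal minimum genuinely form a classical minimizing sequence (so that the uniqueness hypothesis applies), which requires care in relating $J^{\ast}(v)\sim m$ back to $J(v_{\lambda})\to m$ along a qualified set, and in confirming that $St_{\tau}$ is well-defined and independent of the chosen representing net. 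The topology-specific bookkeeping in (4) is routine once Proposition~\ref{prop:carola} is invoked, but pinning down $st_{\tau}(v)=u$ cleanly — rather than merely along a subnet — is where the fineness of $\mathcal{U}$ and the single-limit hypothesis of (5) do the real work.
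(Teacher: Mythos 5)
Your overall strategy coincides with the paper's: items (1)--(3) are handled the same way (transfer of the lower bound, reindexing a minimizing sequence as $v_{\lambda}=u_{|\lambda|}$, and sandwiching a minimum between $m$ and a point whose value has standard part $m$), and the core of (4) --- that $st_{\tau}(v)=u$ because the sets $\{\lambda\,:\,|\lambda|>N\}$ are qualified --- is exactly the paper's argument in compressed form. However, there is a genuine gap in the weak-convergence clause of (4): you propose to cite Proposition \ref{prop:carola} ``verbatim'', but that proposition assumes the sequence converges \emph{both} weakly \emph{and} pointwise, whereas this clause assumes only weak convergence; moreover Proposition \ref{prop:carola} does not yield the claim $w(x)=u(x)$ for every $x\in\Omega$. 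The paper instead gives a separate direct argument: it shows that the singular set of $v$ is empty by testing against Dirac-type functionals, deduces $\psi(x)=0$ on $\Omega$, and then extracts $\langle\psi,\varphi^{\ast}\rangle^{\ast}\sim0$ from the splitting identity. (Your use of Proposition \ref{prop:carola} is legitimate for the norm clause, where pointwise convergence is an explicit hypothesis and norm convergence implies weak convergence.) A smaller defect: your first suggestion for (2), choosing $v_{\lambda}\in V(\Omega)\cap\lambda$ with $J(v_{\lambda})$ within $1/|\lambda|$ of $m$, fails for arbitrary $\lambda$, since the finite set $V(\Omega)\cap\lambda$ need not contain any near-minimizer; only your fallback (reindexing a fixed minimizing sequence) works, and that is what the paper does.

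The second gap is in (5), where you correctly identify the obstacle but do not overcome it. The hypothesis concerns \emph{sequences}, so one cannot say that the net $\{v_{\lambda}\}$ ``is a minimizing sequence after passing to a qualified set'' and conclude that its limit is $u$: a net restricted to a qualified set is not a sequence, and nothing in the hypothesis controls its behaviour. The paper's resolution is a contradiction argument: if $st_{\tau}(v)\neq u$, there is a neighbourhood $A$ of $u$ for which $Q=\{\lambda\,:\,v_{\lambda}\notin A\}$ is qualified; setting $Q_{n}=\{\lambda\,:\,|J(v_{\lambda})-m|<1/n\}\cap Q$ (or $\{\lambda\,:\,J(v_{\lambda})<-n\}\cap Q$ when $m=-\infty$), each $Q_{n}$ is qualified hence nonempty, and choosing $\lambda_{n}\in Q_{n}$ produces an honest minimizing sequence $u_{n}:=v_{\lambda_{n}}$ that never enters $A$, contradicting the assumption that every minimizing sequence converges to $u$. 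Without this diagonal extraction, (5) is not proved.
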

\begin{proof} (1) Let $v=\lim_{\lambda\uparrow\Lambda} v_{\lambda}$. Since $m=\inf_{u\in V(\Omega)} J(u)$ we have that for every $\lambda\in\Lambda$ $J\left(v_{\lambda}\right)\geq m$, hence $J^{\ast}\left(v\right)\geq m$.

(2) By (1) it sufficies to show that $st_{\mathbb{R}}(J^{\ast}(v))=m$. Let $\{u_{n}\}_{n\in\mathbb{N}}$ be a minimizing sequence for $J$. For every $\lambda\in\mathfrak{L}$ let $v_{\lambda}:=u_{|\lambda|}$. Let $v:=\lim_{\lambda\uparrow\Lambda} v_{\lambda}$. We claim that $v$ is the desired ultrafunction.

To prove that $st_{\mathbb{R}}(J^{\ast}(v))=\lim_{n\rightarrow +\infty}J\left(u_{n}\right)=m$ we just have to observe that, by our definition of the net $\{v_{\lambda}\}_{\lambda}$, it follows that\footnote{A proof of this simple claim is given in Lemma 28 in \cite{burger}.} 

\[  \lim_{n\rightarrow +\infty}J\left(u_{n}\right)=st_{\mathbb{R}}\left( \lim_{\lambda\uparrow\Lambda} J\left(v_{\lambda}\right)\right), \]

and we conclude as $\lim_{\lambda\uparrow\Lambda} J\left(v_{\lambda}\right)=J^{\ast}(v)$ by definition.

(3) Let $v=\lim_{\lambda\uparrow\Lambda} v_{\lambda}$, and let $w\in V_{\Lambda}(\Omega)$ be such that $st_{\mathbb{R}}\left(J^{\ast}(w)\right)=m$. Then $m\leq J^{\ast}(v)$ by (1), whilst $st_{\mathbb{R}}\left(J^{\ast}(v)\right)\leq st_{\mathbb{R}}\left(J^{\ast}(w)\right)=m$. Hence $st\left(J^{\ast}(v)\right)=m$, as desired.

(4) Let $v$ be given as in point (2). Let us show that $st_{V(\Omega)}(v)=u$: let $A\in\tau$ be an open neighborhood of $u$. As $\{u_{n}\}_{n}$ converges to $u$, there exists $N>0$ such that for every $m>N$ $u_{n}\in A$. Let $\mu\in\mathfrak{L}$ be such that $|\mu|>N$. Then 

\[\forall\lambda\in Q_{\mu}:=\{\lambda\in\mathfrak{L}\mid \mu\subseteq\lambda\} \ v_{\lambda}\in A, \]

and as $Q_{\mu}$ is qualified, this entails that $v\in A^{\ast}$. Since this holds for every $A$ neighborhood of $u$, we deduce that $st_{\tau}(v)=u$, as desired.

Now let $u=w^{\circ}+\psi$ be the splitting of $u$. 

If $\tau$ is the pointwise convergence, $st_{\tau}(v)(x)=u(x)$ for every $x\in\Omega$, hence by Definition \ref{splitting} we have that the singular set of $u$ is empty and that $w(x)=u(x)$ for every $x\in\Omega$, as desired. A similar argument works in the case of the pointwise convergence a.e.

If $\tau$ is the weak convergence topology, then $st_{\tau}(v)=u$ means that $\langle v,\varphi^{\ast}\rangle^{\ast}\sim\langle u,\varphi\rangle$ for every $\varphi$ in the dual of $V(\Omega)$. Now let $S$ be the singular set of $u$. We claim that $S=\emptyset$. If not, let $x\in S$ and let $\varphi=\delta_{x}$. Then $\langle v,\varphi^{\ast}\rangle^{\ast}=v(x)$ is infinite, whilst $\langle u,\delta_{x}\rangle=u(x)$ is finite, which is absurd. Henceforth for every $x\in \Omega$ we have that $\psi(x)=0$. But 
\[\langle u,\varphi\rangle\sim\langle v,\varphi^{\ast}\rangle^{\ast}=\langle w^{\circ}+\psi,\varphi^{\ast}\rangle^{\ast}\\=\langle w^{\circ},\varphi^{\ast}\rangle^{\ast}+\langle \psi,\varphi^{\ast}\rangle^{\ast}\\=\langle w,\varphi\rangle+\langle \psi,\varphi^{\ast}\rangle^{\ast},\]

hence $st_{\tau}(\psi)=u-w$. As $\psi(x)=0$ for all $x\in\Omega$, this means that $u(x)=w(x)$ for every $x\in\Omega$. Then

\[ \langle u,\varphi\rangle+\langle \psi,\varphi^{\ast}\rangle^{\ast}=\langle w,\varphi\rangle+\langle \psi,\varphi^{\ast}\rangle^{\ast}=\langle v,\varphi\rangle\sim \langle u,\varphi\rangle, \]

and so $\langle \psi,\varphi^{\ast}\rangle^{\ast}\sim 0$.

Finally, if $\tau$ is the strong convergence with respect to a norm $||\cdot||$ and $\{u_{n}\}_{n}$ converges pointwise to $u$, then by what we proved above we have that $v(x)\sim u(x)\ \forall x\in \Omega$, hence $u(x)\sim w(x)$ for every $x\in\Omega$, which means $u=w$ as both $u,w\in V(\Omega)$. Then $||\psi||=||u-w^{\circ}||=||u-v^{\circ}||+||v^{\circ}-w^{\circ}||\sim 0$.

(5) Let $v=\lim_{\lambda\uparrow\Lambda} v_{\lambda}$. By point (2), the only claim to prove is that $st_{\tau}(v)=u$. We distinguish two cases:

Case 1: $J^{\ast}(v)\sim r\in\mathbb{R}$. As we noticed in point (2), it must be $r=m$. By contrast, let us assume that $st_{\tau}(v)\neq u$. In this case, there exists an open neighborhood $A$ of $u$ such that the set 

\[Q:=\left\{\lambda\in\mathfrak{L}\mid v_{\lambda}\notin A\right\} \]

is qualified. For every $n\in\mathbb{N}$, let 

\[Q_{n}:=\left\{\lambda\in\mathfrak{L}\mid |J(v_{\lambda})-r|<\frac{1}{n}\right\}\cap Q. \]

Every $Q_{n}$ is qualified, hence nonempty. For every $n\in\mathbb{N}$, let $\lambda_{n}\in Q_{n}$. Finally, set $u_{n}:=v_{\lambda_{n}}$. By construction, $\lim_{n\in\mathbb{N}}J\left(u_{n}\right)=m$. This means that $\{u_{n}\}_{n\in\mathbb{N}}$ is a minimizing sequence, hence it converges to $u$ in the topology $\tau$, and this is absurd as, for every $n\in\mathbb{N}$, by construction $u_{n}\notin A$. Henceforth $st_{\tau}(v)=u$. 

Case 2: $J^{\ast}(v)\sim -\infty$. As we noticed in the proof of point (2), in this case $m=-\infty$. Let us assume that $st_{V(\Omega)}(v)\neq u$. Then there exists an open neighborhood $A$ of $u$ such that the set 

\[Q:=\left\{\lambda\in\mathfrak{L}\mid v_{\lambda}\notin A\right\} \]

is qualified. For every $n\in\mathbb{N}$, let 

\[Q_{n}=\{\lambda\in\mathfrak{L}\mid J(v_{\lambda})<-n\}\cap Q\]

and let $\lambda_{n}\in Q_{n}$. Finally, let $u_{n}:=v_{\lambda_{n}}$. Then $J\left(u_{n}\right)<-n$ for every $n\in\mathbb{N}$, hence $\{u_{n}\}_{n\in\mathbb{N}}$ is a minimizing sequence, and so it must converge to $u$. However, by construction $u_{n}\notin A$ for every $n\in\mathbb{N}$, which is absurd.
\end{proof}

\begin{example} Let $\Omega=(0,1)$, let 

\begin{center} $V(\Omega)=\{u:\Omega\rightarrow\mathbb{R}\mid u$ is the restriction to $\Omega$ of a piecewise $\mathcal{C}^{1}([0,1])$ function$\}$  \end{center}

and let $J:V(\Omega)\rightarrow\mathbb{R}$ be the functional

\[ J(u):=\int_{\Omega} u^{2}(x)dx+\int_{\Omega}\left(\left(u^{\prime} \right)^{2}-1\right)^{2}dx. \]

It is easily seen that $\inf_{u\in V(\Omega)} J(u)=0$, and that the minimizing sequences of $J$ converge pointwise and strongly in the $L^{2}$ norm to $0$, but $J(0)=1$. 

Let $v\in V_{\Lambda}(\Omega)$ be the minimum of $J^{\ast}:V_{\Lambda}(\Omega)$. From points (4) and (5) of Theorem \ref{minimizing sequences} we deduce that $0< J^{\ast}(v)\sim 0$, that $st_{V(\Omega)}(v)=0$ and that the canonical decomposition of $v$ is $v=0^{\circ}+\psi$, with $\psi=0$ for every $x\in\Omega$ and $\int_{\Omega^{\ast}}^{\ast}\psi^{2} dx\sim 0$. Moreover, as $J^{\ast}(\psi)=0$, we also have that $\int^{\ast}_{\Omega^{\ast}}\left(\left(\psi^{\prime} \right)^{2}-1\right)^{2}dx\sim 0$. \end{example}

\section{Applications}\label{applications}

\subsection{Sign-perturbation of potentials}

\noindent The first problem that we would like to tackle by means of ultrafunctions
regards the sign-perturbation of potentials. 

Let us start by recalling some results recently proved by L.\ Brasco and M.\ Squassina in \cite{BS-17}
as a refinement and extension of some classical result by Brezis and Nirenberg \cite{BN-classic}.

\noindent Let $\Omega$ be a bounded domain of ${\mathbb{R}}^{N}$ with\footnote{In \cite{BS-17} the authors work more in general with a $p\in (1,N)$, and consider also a fractional version of Problem
\ref{eq: crit-probb}; however, in this paper we prefer to consider only the local case $p=2$.} $N>2$. Consider the minimization problem 
\begin{equation}\label{eq:classical Sp(a)}
\mathcal{S}(a):=\inf_{u\in\mathcal{D}_{0}^{1,2}(\Omega)}\left\{ \|\nabla u\|_{L^{2}(\Omega)}^{2}+\int_{\Omega}a\,|u|^{2}\,dx\,:\,\|u\|_{L^{2^{*}}(\Omega)}=1\right\} ,
\end{equation}
where $a\in L^{N/2}(\Omega)$ is given, $2^{*}=2N/(N-2)$, 
$$
\mathcal{D}_{0}^{1,2}(\Omega):=\big\{u\in L^{2^{\ast}}(\Omega)\mid \nabla u\in L^{2}(\Omega), u=0 \ \text{on} \ \partial\Omega\big\}.
$$
By Lagrange multipliers rule, minimizers of the previous problem (provided they
exist) are constant sign weak solutions of 
\begin{equation}
\begin{cases}
-\Delta u+a\,u=\mu\,|u|^{2^{\ast}-2}\,u, & \mbox{ in }\Omega,\\
u=0, & \mbox{ on }\partial\Omega,
\end{cases}\label{eq: crit-probb}
\end{equation}
with $\mu=\mathcal{S}(a)$, namely 
\[
\int_{\Omega}\nabla u\cdot\nabla\varphi\,dx+\int_{\Omega}a\,u\,\varphi\,dx=\mu\,\int_{\Omega}|u|^{2^{*}-2}\,u\,\varphi\,dx,
\]
for every $\varphi\in\mathcal{D}_{0}^{1,2}(\Omega)$. 

The main result in \cite{BS-17} is the following Theorem, where the
standard notations 
\[
a_{+}=\max\{a,\,0\},\qquad a_{-}=\max\{-a,0\},\qquad B_{R}(x_{0})=\{x\in\mathbb{R}^{N}\,:\,|x-x_{0}|<R\}
\]
are used: 
\begin{thm}[Brasco, Squassina]
	Let $\Omega\subset\mathbb{R}^{N}$ be an open bounded set. Then the
	following facts hold: 
	\begin{itemize}
		\item[1.] If $a\geq0$, then $\mathcal{S}(a)$ does not admit a solution. 
		\item[2.] Let $N>4$. Assume that there exist $\sigma>0,R>0$ and $x_{0}\in\Omega$
		such that 
		\[
		a_{-}\geq\sigma,\qquad\mbox{ a.\,e. on }B_{R}(x_{0})\subset\Omega.
		\]
		Then $\mathcal{S}(a)$ admits a solution. 
		\item[3.] Let $2<N\leq 4$.\ For any $x_{0}\in\Omega$, for any $R>0$ s.t. $B_{R}(x_{0})\subset\Omega$ there exists $\sigma=\sigma(R,N)>0$
		such that if 
		\[
		a_{-}\geq\sigma,\qquad\mbox{ a.\,e. on }B_{R}(x_{0}),
		\]
		then $\mathcal{S}(a)$ admits a solution. 
	\end{itemize}
\end{thm}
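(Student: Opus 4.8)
The plan is to reduce everything to a comparison between $\mathcal{S}(a)$ and the best Sobolev constant
\[
S=\inf_{u\in\mathcal{D}^{1,2}(\mathbb{R}^{N})\setminus\{0\}}\frac{\|\nabla u\|_{L^{2}(\mathbb{R}^{N})}^{2}}{\|u\|_{L^{2^{*}}(\mathbb{R}^{N})}^{2}},
\]
which is achieved on $\mathbb{R}^{N}$ exactly by the Aubin--Talenti bubbles $U_{\varepsilon}(x)=c_{N}\,\varepsilon^{(N-2)/2}\,(\varepsilon^{2}+|x|^{2})^{-(N-2)/2}$. First I would record two facts valid for every $a\in L^{N/2}(\Omega)$. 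The inequality $\mathcal{S}(a)\leq S$ always holds: inserting a cut-off, concentrating bubble $U_{\varepsilon}$ into the Rayleigh quotient defining $\mathcal{S}(a)$, the leading term reproduces $S$ while the perturbation $\int_{\Omega}a\,|U_{\varepsilon}|^{2}\,dx$ is negligible. The decisive compactness statement is the converse criterion: \emph{if} $\mathcal{S}(a)<S$ strictly, \emph{then} $\mathcal{S}(a)$ is attained. I would prove this by the concentration--compactness principle (equivalently, a Brezis--Lieb splitting of a minimizing sequence): the only way a minimizing sequence can fail to converge in $L^{2^{*}}$ is by concentrating its mass at a point, but concentration forces the limiting quotient to be $\geq S$, contradicting $\mathcal{S}(a)<S$.

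Granting these two facts, Part~1 is immediate: when $a\geq0$ the potential term is nonnegative, so $\mathcal{S}(a)\geq S$, and together with $\mathcal{S}(a)\leq S$ this gives $\mathcal{S}(a)=S$. A minimizer would then be an extremal for the Sobolev inequality on the bounded domain $\Omega$, that is, a restriction of a Talenti bubble, which cannot satisfy the homogeneous boundary condition; a Pohozaev identity on $\Omega$ rules this out cleanly, so no solution exists.

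For Parts~2 and~3 the entire task is to verify the strict inequality $\mathcal{S}(a)<S$, after which the compactness criterion yields existence. I would test the quotient with a cut-off bubble $U_{\varepsilon}$ centered at $x_{0}$ and expand both contributions as $\varepsilon\to0^{+}$: the Sobolev ratio produces $S+O(\varepsilon^{N-2})$, the $O(\varepsilon^{N-2})$ being the positive defect created by the cut-off, while the potential contributes at most $-\sigma\int_{B_{R}(x_{0})}|U_{\varepsilon}|^{2}\,dx$ plus lower-order terms, a negative gain. Everything hinges on the dimension-sensitive scaling of $\int_{B_{R}(x_{0})}|U_{\varepsilon}|^{2}\,dx$: the substitution $x=x_{0}+\varepsilon y$ shows that this integral behaves like $\varepsilon^{2}$ when $N>4$, like $\varepsilon^{2}|\log\varepsilon|$ when $N=4$, and like $\varepsilon^{N-2}$ when $N=3$. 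When $N>4$ the gain of order $\varepsilon^{2}$ dominates the loss of order $\varepsilon^{N-2}$ for \emph{every} $\sigma>0$ and small $\varepsilon$, which is Part~2. When $2<N\leq4$ the gain and the cut-off defect are of the same order (up to the logarithmic enhancement at $N=4$), so one can only guarantee the correction to be negative by taking $\sigma$ larger than a threshold $\sigma(R,N)$ determined by the ratio of the two constants; this is Part~3.

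I expect the main obstacle to be the loss of compactness intrinsic to the critical exponent $2^{*}$, for which the embedding $\mathcal{D}_{0}^{1,2}(\Omega)\hookrightarrow L^{2^{*}}(\Omega)$ fails to be compact; this is precisely what the strict inequality $\mathcal{S}(a)<S$ is designed to overcome through the concentration--compactness dichotomy. The second, more technical, difficulty is the careful bookkeeping of the $\varepsilon$-asymptotics of the cut-off bubble, where the exact dimensional thresholds $N>4$ versus $2<N\leq4$ emerge from the competition between the potential gain and the cut-off defect.
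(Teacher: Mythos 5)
The paper does not actually prove this theorem: it is imported verbatim from \cite{BS-17} and the only ``proof'' in the source is the citation. So the comparison is between your sketch and the argument of the cited paper, and these essentially coincide: \cite{BS-17} follows exactly the Brezis--Nirenberg scheme you describe, namely (i) $\mathcal{S}(a)\le S$ for every $a\in L^{N/2}(\Omega)$ by testing with concentrating bubbles, (ii) the compactness criterion that strict inequality $\mathcal{S}(a)<S$ forces attainment (via concentration--compactness/Brezis--Lieb splitting of a minimizing sequence, using that $u\mapsto\int_\Omega a|u|^2\,dx$ is weakly continuous on bounded sets of $\mathcal{D}_0^{1,2}(\Omega)$ because $a\in L^{N/2}$), and (iii) verification of $\mathcal{S}(a)<S$ on truncated Aubin--Talenti bubbles, with the dimensional thresholds coming from the competition between the cut-off defect $O(\varepsilon^{N-2})$ and the gain $\sigma\int_{B_R(x_0)}U_\varepsilon^2\,dx$, whose order is $\varepsilon^2$, $\varepsilon^2|\log\varepsilon|$, $\varepsilon^{N-2}$ for $N>4$, $N=4$, $N=3$ respectively. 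Your outline is correct and is the standard route.

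Two small remarks. In Part 1 the appeal to a Pohozaev identity is both unnecessary and not quite available on an arbitrary bounded open set (that argument needs star-shapedness); the clean conclusion is that a minimizer $u$ with $\int_\Omega a|u|^2\,dx\ge 0$ and $\mathcal{S}(a)=S$ would, after extension by zero, be an extremal of the Sobolev inequality on $\mathbb{R}^N$, hence strictly positive everywhere (by the classification of extremals, or by the strong maximum principle applied to the Euler equation), contradicting $u\equiv 0$ outside the bounded set $\Omega$. Second, your own asymptotics at $N=4$ give a gain of order $\varepsilon^2|\log\varepsilon|$ against a defect of order $\varepsilon^2$, so they in fact yield existence for \emph{every} $\sigma>0$ in that dimension; this is stronger than, and therefore consistent with, the quoted statement, which groups $N=4$ with the threshold case. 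The genuinely technical content --- the compactness lemma at the critical exponent, the two-term expansion of the truncated bubble, and the control of the contribution of $a_+$ outside $B_R(x_0)$, which is $O(\varepsilon^{N-2})$ since $L^{N/2}(\Omega)\subset L^1(\Omega)$ on a bounded domain --- is asserted rather than carried out, but each step is standard and your identification of where the dimension enters is the right one.
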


In \cite{ultra}, V.\,Benci studied in the ultrafunctions setting the following similar (simpler) problem: minimize
\begin{equation*}
\underset{u\in \mathfrak{M}_{p}}{\min }\ J(u),
\end{equation*}%
where%
\begin{equation*}
J(u)=\int_{\Omega }\left\vert \nabla u\right\vert ^{2}\ dx
\end{equation*}%
and 
\begin{equation*}
\mathfrak{M}_{p}=\left\{ u\in \mathcal{C}_{0}^{2}(\overline{\Omega }):\
\int_{\Omega }\left\vert u\right\vert ^{p}\ dx=1\right\}.
\end{equation*}%
Here $\Omega $ is a bounded set in $\mathbb{R}^{N}\ $with smooth boundary,\ $%
N\geq 3$ and $p>2$. In the ultrafunctions setting introduced in \cite{ultra} (and with the notations of \cite{ultra}), the problem takes
the following form:

\begin{equation}
\underset{u\in \widetilde{\mathfrak{M}}_{p}}{\min }\ J(u)  \label{*},
\end{equation}%
where%
\begin{equation*}
J(u)=\int_{\Omega }^{\ast }\left\vert \nabla u\right\vert ^{2}\ dx
\end{equation*}%
and 
\begin{equation*}
\widetilde{\mathfrak{M}}_{p}=\left\{ u\in V_{\mathcal{B}}^{2,0}(\overline{%
\Omega })\ |\ \int_{\Omega }^{\ast }\left\vert u\right\vert ^{p}\
dx=1\right\}
\end{equation*}%
with $V_{\mathcal{B}}^{2,0}(\overline{\Omega })=\mathcal{B}\left[ \mathcal{C%
}_{0}^{2}(\overline{\Omega })\right] .$

For every $p>2,$ problem (\ref{*}) has a ultrafunction solution $\tilde{u}_{p}$ and, by setting $\widetilde{m}_{p}=J(\tilde{u}_{p}),$ one can show that 

\begin{itemize}
\item (i) if $2<p<2^{\ast },$ then $\widetilde{m}_{p}=m_{p}\in \mathbb{R}%
^{+} $ and there is at least one standard minimizer $\tilde{u}_{p}$, namely $%
\tilde{u}_{p}\in \mathcal{C}_{0}^{2}(\overline{\Omega });$

\item (ii) if $p=2^{\ast }$ (and $\Omega \neq \mathbb{R}^{N}),$ then $%
\widetilde{m}_{2^{\ast }}=m_{2^{\ast }}+\varepsilon $ where $\varepsilon $
is a positive infinitesimal;

\item (iii) if $p>2^{\ast },$ then $\widetilde{m}_{p}=\varepsilon _{p}$
where $\varepsilon _{p}$ is a positive infinitesimal.
\end{itemize}

Our goal is to show that a similar result can be obtained for Problem \ref{eq: crit-probb}. 

In the present ultrafunctions setting, Problem \ref{eq: crit-probb} takes
the following form: find 
\begin{equation}\label{ultra crit-probb}
\widetilde{\mathcal{S}}(a):=\inf_{u\in V_{\Lambda}(\Omega)}\left\{ \int_{\left(\mathbb{R}^{N}\right)^{\ast}}^{\ast}|\nabla u|^{2}\,\,dx
+\int_{\left(\mathbb{R}^{N}\right)^{\ast}}^{\ast}a|u|^{2}\,\,dx\,:\,\|u\|_{\left[L^{2^{\ast}}(\mathbb{R}^{N})\right]^{\ast}}=1\right\} ,
\end{equation}
where $a\in\,^{\ast}\left[L^{N/2}(\Omega)\right]$ is given, 
and $V_{\Lambda}(\Omega)=\left[\mathcal{D}^{1,2}_{0}(\Omega)\right]_{\Lambda}$.
With the above notations, we can prove the following:

\begin{thm}
	Let $\Omega\subset\mathbb{R}^{N}$ be an open bounded set. Then the following facts hold:
	\begin{enumerate}
	  \item \label{enu: existence sign pert 0} For every $a\in\left[L^{N/2}(\Omega)\right]^{\ast}$ there exists	$u\in V_{\Lambda}(\Omega)$ that minimizes $\widetilde{\mathcal{S}}(a)$.
		\item \label{enu: existence sign pert 1} Let $a\in \left[L^{N/2}(\Omega)\right]$. If	$u\in\mathcal{C}^{1}\left( \Omega \right) \cap 
\mathcal{C}_{0}\left( \overline{\Omega }\right)$ is a minimizer of Problem \ref{eq:classical Sp(a)} then $u^{\ast}$ is a minimizer of $\widetilde{\mathcal{S}}(a^{\ast})$. 
		\item \label{enu: existence sign pert 2} If $a=0$, then $\widetilde{\mathcal{S}}(0)=S+\varepsilon$,
		where 
		$$
		S:=\inf_{u\in\mathcal{D}_{0}\left(\mathbb{R}^{N}\right)\setminus\{0\}}\frac{\|\nabla u\|^2_{L^2}}{||u||_{L^{2^{\ast}}}^{2}}
		$$ 
		and
		\[
		\varepsilon=\begin{cases}
		0, & \text{if}\ \Omega=\mathbb{R}^{N},\\
		\text{a strictly positive infinitesimal,} & \text{if}\ \Omega\neq\mathbb{R}^{N};
		\end{cases}
		\] 
	moreover, if $u$ is the minimizer in $V_{\Lambda}(\Omega)$, then the functional part $w$ of $u$ is $0$;
		\item \label{enu: existence sign pert 3} Let $a\geq 0$ have an isolated minimum $x_{m}$, and let $u\in V_{\Lambda}(\Omega)$ be the minimum of Problem \ref{ultra crit-probb}. If $u=w^{\circ}+\psi$ is the canonical splitting of $u$, then $w=0$ and $\psi$ concentrates in $x_{m}$, in the sense that for every $x\notin mon(x_{m}) \ \psi(x)\sim 0$. Moreover, $\langle \psi, \varphi^{\ast}\rangle^{\ast}\sim 0$ for every $\varphi$ in the dual of $V(\Omega)$.
		
	\end{enumerate}
	
\end{thm}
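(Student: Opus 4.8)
The plan is to obtain the first two claims from the abstract results of Section \ref{General theorems} and the last two from a concentration analysis of the associated minimizing sequences. For \eqref{enu: existence sign pert 0} I would run the Faedo--Galerkin scheme underlying Theorem \ref{general existence}: for each $\lambda\in\mathfrak{L}$ the functional $u\mapsto\sqint|\nabla u|^{2}\,dx+\sqint a|u|^{2}\,dx$ is continuous on the finite dimensional space $V_{\lambda}\subset\mathcal{D}_{0}^{1,2}(\Omega)$ and the constraint set $\{v\in V_{\lambda}:\|v\|_{L^{2^{\ast}}}=1\}$ is compact (all norms being equivalent in finite dimension), so a minimizer $u_{\lambda}$ exists; the $\Lambda$-limit $u=\lim_{\lambda\uparrow\Lambda}u_{\lambda}$ then minimizes $F$ over the internal constraint set, exactly as in the proof of Theorem \ref{general existence}, after normalizing competitors so that $\|v_{\lambda}\|_{L^{2^{\ast}}}=1$. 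For \eqref{enu: existence sign pert 1}, when $a$ is standard I would view the problem as the unconstrained, $0$-homogeneous minimization of the Rayleigh quotient $J(u)=(\,\|\nabla u\|_{L^{2}}^{2}+\int_{\Omega}a|u|^{2}\,)/\|u\|_{L^{2^{\ast}}}^{2}$, whose minimizer set $M$ coincides up to scaling with the solution set of \eqref{eq:classical Sp(a)}. The identities of Definition \ref{def:scorre}.(1) and Definition \ref{def:deri}.(1) give $F=J^{\ast}$ on $V_{\Lambda}(\Omega)$, and since $u\in\mathcal{C}^{1}(\Omega)\cap\mathcal{C}_{0}(\overline{\Omega})\subset\mathcal{D}_{0}^{1,2}(\Omega)$ belongs to $M$, Corollary \ref{star are solutions} yields that $u^{\ast}$ minimizes $F$.

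For \eqref{enu: existence sign pert 2} I would split into two cases. If $\Omega=\mathbb{R}^{N}$, the infimum $S$ is attained by an Aubin--Talenti function $U\in\mathcal{D}_{0}^{1,2}(\mathbb{R}^{N})$, so \eqref{enu: existence sign pert 1} applies to $U$ and gives $\widetilde{\mathcal{S}}(0)=S$, i.e.\ $\varepsilon=0$. If $\Omega\neq\mathbb{R}^{N}$ is bounded, the Sobolev quotient over $\mathcal{D}_{0}^{1,2}(\Omega)$ still has infimum $S$ but is never attained; hence for each finite dimensional $V_{\lambda}\subset\mathcal{D}_{0}^{1,2}(\Omega)$ its attained minimum $c_{\lambda}$ is a real number with $c_{\lambda}>S$. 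As $c_{\lambda}-S>0$ for \emph{every} $\lambda$, the set where $c_{\lambda}-S$ vanishes is empty, hence not qualified, so $\widetilde{\mathcal{S}}(0)-S=\lim_{\lambda\uparrow\Lambda}(c_{\lambda}-S)$ is a strictly positive infinitesimal, giving $\varepsilon\sim0$, $\varepsilon>0$. Finally, on a bounded domain every minimizing sequence of the Sobolev quotient concentrates and converges weakly to $0$, so Theorem \ref{minimizing sequences}.(5) in the weak topology gives $st_{\tau}(u)=0$, whence the functional part $w$ vanishes.

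Claim \eqref{enu: existence sign pert 3} is where the real work lies. Because $a\geq0$, Part~1 of the Brasco--Squassina theorem shows the classical problem has no minimizer, so again every minimizing sequence concentrates and converges weakly to $0$; the weak case of Theorem \ref{minimizing sequences}.(4) then yields $w=0$ and $\langle\psi,\varphi^{\ast}\rangle^{\ast}\sim0$ for every $\varphi$ in the dual of $V(\Omega)$, exactly as in \eqref{enu: existence sign pert 2}. The delicate point, which I expect to be the main obstacle, is to \emph{localize} the concentration at the isolated minimum $x_{m}$ of $a$. My plan is an energy comparison at the infinitesimal scale: the ultrafunction minimizer $u$ realizes $F(u)=\widetilde{\mathcal{S}}(a)$ \emph{exactly}, so I would test it against an explicit bubble of infinitesimal width centered at $x_{m}$ and use the pointwise bound $\sqint a|u|^{2}\,dx\geq a(x_{m})\sqint|u|^{2}\,dx$, together with the strict inequality $a(p)>a(x_{m})$ for $p\neq x_{m}$, to show that any portion of $u$ carrying non-infinitesimal mass outside $\mathfrak{mon}(x_{m})$ would strictly raise the otherwise infinitesimal penalty term $\sqint a|u|^{2}\,dx$, contradicting exact minimality. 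The subtle step is to make this quantitative, i.e.\ to transport the Brezis--Nirenberg bubble estimates into the nonstandard setting so that the infinitesimal energy gap between concentrating at $x_{m}$ and concentrating elsewhere is genuinely detected by $F$; once this is in place, the bubble structure of $u$ forces $\psi(x)=u(x)\sim0$ for every $x\notin\mathfrak{mon}(x_{m})$.
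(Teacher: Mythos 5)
Your treatment of parts (\ref{enu: existence sign pert 0})--(\ref{enu: existence sign pert 2}) is essentially the paper's: existence on each finite-dimensional $V_{\lambda}$ followed by a $\Lambda$-limit (the paper phrases this via Theorem \ref{coherence}), Corollary \ref{star are solutions} for standard minimizers, and for $a=0$ the dichotomy from \cite[Lemma 3.1]{BS-17} together with Theorem \ref{minimizing sequences}; your observation that $c_{\lambda}>S$ for every $\lambda$ while $st(\widetilde{\mathcal{S}}(0))=S$ is a perfectly good way to see that $\varepsilon$ is a strictly positive infinitesimal, and your use of the weak-topology clause of Theorem \ref{minimizing sequences} in place of the paper's pointwise-a.e.\ route to $w=0$ is an equivalent path through the same theorem.

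The genuine gap is in part (\ref{enu: existence sign pert 3}), precisely at the step you yourself flag as ``the main obstacle.'' Your plan is a contradiction argument on the exact minimizer: any non-infinitesimal mass outside $\mathfrak{mon}(x_{m})$ should strictly raise $\sqint a|u|^{2}\,dx$ and contradict exact minimality. As written this does not close, because the quantity you want to exploit is itself infinitesimal: for a concentrating competitor both $\|\nabla u\|^{2}-S$ and $\int a|u|^{2}$ are infinitesimal, and the ``gap'' between concentrating at $x_{m}$ and at a point $p$ with $a(p)>a(x_{m})$ must be compared against the other infinitesimal corrections (truncation of the bubble, finite-dimensional approximation error, the possibility that $u$ is not a bubble at all). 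Nothing in your argument shows the sign of this comparison at the relevant infinitesimal order, nor that the minimizer has ``bubble structure'' in the first place. The paper avoids this entirely: it does not argue on the abstract minimizer but explicitly builds a minimizing net $u_{\varepsilon,\varepsilon}$ of truncated Aubin--Talenti bubbles \emph{centered at $x_{m}$} (importing the quantitative expansions of \cite{BS-17}, in particular the monotonicity $F(u_{\delta_{1},\varepsilon})\le F(u_{\delta_{2},\varepsilon})$ for small $\delta$), verifies it is minimizing, and then reads off $w=0$, the concentration of $\psi$ at $x_{m}$, and $\langle\psi,\varphi^{\ast}\rangle^{\ast}\sim0$ from Theorem \ref{minimizing sequences}.(4) applied to the ultrafunction generated by that net. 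To repair your proof you should either carry out the Brezis--Nirenberg-type two-sided energy expansion you allude to (which is substantially more work than the rest of the proof), or switch to the paper's strategy of exhibiting an explicit concentrating minimizing net and transferring its properties through Theorem \ref{minimizing sequences}.
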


\begin{proof} (\ref{enu: existence sign pert 0}) This follows from Theorem \ref{coherence}, as the functional $\|\nabla u\|_{L^{2}(\Omega)}^{2}+\int_{\Omega}a\,|u|^{2}\,dx$ admits a minimum on every finite dimensional subspace of $V_{\Lambda}$.

(\ref{enu: existence sign pert 1}) This follows from Corollary \ref{star are solutions}.

(\ref{enu: existence sign pert 2}) In \cite[Lemma 3.1]{BS-17} it was proved that, if we consider	problem \ref{eq:classical Sp(a)}, we have that 
	\[
	\mathcal{S}(0)=S
	\]
	and $\mathcal{S}(0)$ is attained in $\mathcal{D}_{0}(\Omega)$ if and only if $\Omega =\mathbb{R}^{N}$. Therefore if $\Omega=\mathbb{R}^{N}$ the results follows from point (2). If $\Omega\neq\mathbb{R}^{N}$, the fact that $\widetilde{\mathcal{S}}(0)=S+\varepsilon$ follows from Theorem \ref{minimizing sequences}.(3). Moreover, all minimizing sequences $\{u_{n}\}_{n}$ converge weakly to $0$ in $H^{1}$, therefore they converges strongly in $L^{2}(\Omega)$ and so they converge pointwise a.e., hence by Theorem \ref{minimizing sequences}.(5) we deduce that, in the splitting $u=w^{\circ}+\psi$, we have that $w=0$, namely the ultrafunction solution coincides with its singular part. 
	
	(\ref{enu: existence sign pert 3}) We start by following the approach of \cite{BS-17}: we let $U$ be a minimizer of 
	$$
	\inf_{u\in\mathcal{D}_{0}^{1,2}(\Omega)}\frac{[u]_{\mathcal{D}^{1,2}}^2}{\|u\|_{L^{2^{\ast}}}^{2}}
	$$ 
	and, for every $\varepsilon>0$, let $U_{\varepsilon}(r):=\varepsilon^{\frac{2-N}{2}}U\left(\frac{r}{\varepsilon}\right)$. Let $\delta>0$ be such that $B_{\delta}(x_{m})\subseteq\Omega$, and let $u_{\delta,\varepsilon}$ be defined as follows
		\[u_{\delta,\varepsilon}=\begin{cases}
		U_{\varepsilon}(r) & \text{if}\ r\leq\delta,\\
		U_{\varepsilon}(\delta)\frac{U_{\varepsilon}(r)-U_{\varepsilon}(\delta\Theta)}{U_{\varepsilon}(\delta)-U_{\varepsilon}(\delta\Theta)} & \text{if}\ \delta< r\leq \delta\Theta;\\
		0 & \text{if}\ r>\delta\Theta;\\
		\end{cases}
		\] 
		where $\Theta$ is a constant given in Lemma 2.4 of \cite{BS-17}. Moreover, if $F(u):=\|\nabla u\|_{L^{2}(\Omega)}^{2}+\int_{\Omega}a\,|u|^{2}\,dx$, for $\delta_{1},\delta_{2}$ small enough we have that $F(u_{\delta_{1},\varepsilon})\leq F(u_{\delta_{2},\varepsilon})$. Then $\left(u_{\varepsilon,\varepsilon}\right)$ is a minimizing net (for $\varepsilon\rightarrow 0$), so we can use Theorem \ref{minimizing sequences}.(4): as $\left(u_{\varepsilon,\varepsilon}\right)$ converges pointwise to 0 we obtain that $w=0$, whilst the definition of the net ensure the concentration of $\psi$ in $x_{m}$. The last statement is again a direct consequence of Theorem \ref{minimizing sequences}.(4).
\end{proof}

Let us notice that the above Theorem shows a strong difference between
the ultrafunctions and the classical case: the existence of solutions
in $V_{\Lambda}(\Omega)$ is ensured independently of the sign of
$a$ whilst, as discussed in \cite[Section 4]{BS-17}, the conditions
on $a$ for the existence of solutions in the approach of L.\ Brasco and M.\ Squassina
are essentially optimal. Of course, ultrafunction solutions might
be very wild in general; their particular structure can be described in some cases, depending on $a$.

\subsection{The singular variational problem}

\subsubsection{Statement of the problem}

Let $W$ be a $C^{1}$-function defined in $\mathbb{R}\setminus\left\{ 0\right\} $
such that
\[
\underset{t\rightarrow0}{lim}\,W(t)=+\infty
\]
and
\[
\underset{t\rightarrow\pm\infty}{\overline{lim}}\,\,\frac{W(t)}{t^{2}}=0.
\]

We are interested in the singular problem (SP): \bigskip

\textbf{\emph{Naive formulation}} \textbf{\emph{of Problem SP}}: find
a continuous function
\[
u\,:\,\text{\ensuremath{\overline{\Omega}}}\rightarrow\mathbb{R},
\]
which satisfies the equation:
\begin{equation}
-\Delta u+W'(u)=0\,\,\,\text{in}\,\,\Omega\label{eq:a}
\end{equation}
with the following boundary condition:
\begin{equation}
u(x)=g(x)\,\,\,\text{for}\,\,x\in\partial\Omega,
\end{equation}
where $\Omega$ is an open set such that $\partial\Omega\neq\emptyset$
and $g\in L^{1}(\partial\Omega)$ is a function different from 0 for
every $x$ which change sign; e.g. $g(x)=\pm1$. Clearly, this problem
does not have any solution in $C^{1}$. This problem could be riformulated
as a kind of free boundary problem in the following way: \bigskip

\textbf{\emph{Classical formulation}} \textbf{\emph{of Problem SP}}:
find two open sets $\Omega_{1}$ and $\Omega_{2}$ and two functions
\[
u_{i}\,:\,\Omega_{i}\rightarrow\mathbb{R},\,\,i=1,2
\]
such that all the following conditions are fulfilled:

\[
\Omega=\Omega_{1}\cup\Omega_{2}\cup\Xi\,\,\text{where}\,\,\Xi=\overline{\Omega}_{1}\cap\overline{\Omega}_{2}\cap\Omega;
\]
\begin{equation}
-\Delta u_{i}+W'(u_{i})=0\,\,\,\text{in}\,\,\Omega_{i},\,\,i=1,2;\label{m}
\end{equation}
\[
u_{i}(x)=g(x)\,\,\,\text{for}\,\,x\in\partial\Omega\cap\partial\Omega_{i},\,\,i=1,2;
\]
\[
\underset{x\rightarrow\Xi}{\lim}u_{i}(x)=0;
\]
\begin{equation}
\Xi\,\,\text{is locally a minimal surface.}\label{eq:ms}
\end{equation}

Condition (\ref{eq:ms}) is natural, since formally equation (\ref{eq:a})
is the Euler-Lagrange equation relative to the energy
\begin{equation}
E(u)=\frac{1}{2}\int_{\Omega}\left(\left|\nabla u\right|^{2}+W(u)\right)dx\label{eq:ene-1}
\end{equation}
and the density of this energy diverges as $x\rightarrow\Xi$. In
general this problem is quite involved since the set $\Xi$ cannot
be a smooth surface and hence it is difficult to be characterized.
However this problem becomes relatively easy if formulated in the
framework of ultrafuctions.

Let us recall that we the Laplace operator of a ultrafuction $u\in V^{\circ}(\overline{\Omega})$ is defined
as the only ultrafunction $\Delta^{\circ}u\in V^{\circ}(\Omega)$
such that
\[
\forall v\in V_{0}^{\circ}(\overline{\Omega}),\,\,\,\sqint_{\Omega}\Delta^{\circ}uvdx=-\sqint_{\Omega}Du\cdot Dv\,dx
\]
where
\[
V_{0}^{\circ}(\overline{\Omega}):=\left\{ v\in V^{\circ}(\overline{\Omega})\,\,|\,\,\forall x\in\partial\Omega\cap\Gamma,\,\,v(x)=0\right\} .
\]
Notice that, we can assert that $\Delta^{\circ}u(x)=D\cdot D(x)$
only in $x\nsim\partial\Omega^{*}$.

\bigskip

\textbf{\emph{Ultrafunction formulation}} \textbf{\emph{of Problem
SP}}\footnote{If $u$ is a ultrafunction and $W,W'$, etc. are functions, for short,
we shall write $W(u),W'(u)$, etc instead of $W^{*}(u),\left(W'\right)^{*}(u),$
etc.}: find $u\in V^{\circ}(\overline{\Omega})$ such that
\begin{equation}
u(x)\neq0,\,\,\,\forall x\in\left(\overline{\Omega}\right)^{*}\cap\Gamma,\label{eq:c}
\end{equation}
\begin{equation}
-\Delta^{\circ}u+W'(u)=0,\,\,\,\text{for}\,\,x\in\Omega^{*}\cap\Gamma\label{eq:a-1}
\end{equation}
\begin{equation}
u(x)=g^{\circ}(x),\,\,\,\text{for}\,\,x\in\left(\partial\Omega\right)^{*}\cap\Gamma.\label{eq:b-1}
\end{equation}

As we will see in the next section, the existence of this problem
can be easily proven using variational methods.

\subsubsection{The existence result}

The easiest way to prove the existence of a ultrafunction solution
of problem SP is achieved exploiting the variational structure of
equation (\ref{eq:a-1}). Let us consider the extension
\begin{equation}
E^{\circ}(u)=\sqint_{\Omega}\left(\frac{1}{2}\left|Du\right|^{2}+W(u)\right)dx\label{eq:ene-1-1}
\end{equation}
 of the functional (\ref{eq:ene-1}) to the space 
\[
V_{g}^{\circ}(\overline{\Omega}):=\left\{ u\in V^{\circ}(\overline{\Omega})\,\,|\,\,\forall x\in\left(\partial\Omega\right)^{*}\cap\Gamma,\,\,u(x)=g(x)\right\}. 
\]

\begin{rem}
We remark that the integration is taken over $\Omega^{*}$, but $u$
is defined in $\overline{\Omega}^{*}$. This is important, in fact
for some $x\in\Omega^{*}$, $x\sim\partial\Omega^{*},$ the value
of $Du(x)$ depends on the value of $u$ in some point $y\in\partial\Omega^{*},\,y\sim x.$
This is a remarkable difference between the usual derivative and the
ultrafunction derivative.
\end{rem}

\begin{lemma}
Equation (\ref{eq:a-1}) is the Euler-Lagrange equation of the functional
(\ref{eq:ene-1-1}).
\end{lemma}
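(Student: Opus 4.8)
The plan is to compute the first variation of $E^{\circ}$ along admissible directions, rewrite it via the defining property of the ultrafunction Laplacian, and then pass from the resulting weak equation to the pointwise equation (\ref{eq:a-1}) using the hyperfinite nature of $\sqint$. First I would fix $u\in V_{g}^{\circ}(\overline{\Omega})$ and observe that the admissible variations are exactly the $v\in V_{0}^{\circ}(\overline{\Omega})$, since $u+tv$ remains in $V_{g}^{\circ}(\overline{\Omega})$ precisely when $v(x)=0$ for every $x\in(\partial\Omega)^{*}\cap\Gamma$.

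Since $\sqint_{\Omega}$ is a hyperfinite sum and $D$ is linear, the gradient term is a quadratic internal polynomial in $t$, while the potential term equals $\sum_{a\in\Gamma\cap\Omega^{*}}W^{*}(u(a)+tv(a))\,d(a)$, an internally differentiable function of $t$ because $W\in C^{1}$. Differentiating termwise at $t=0$, using $D(u+tv)=Du+tDv$ and the chain rule for the natural extension $W^{*}$ (whose internal derivative is $(W')^{*}$), gives
\[
\left.\frac{d}{dt}\right|_{t=0}E^{\circ}(u+tv)=\sqint_{\Omega}Du\cdot Dv\,dx+\sqint_{\Omega}W'(u)\,v\,dx.
\]
Next I would invoke the very definition of $\Delta^{\circ}u$, which states that $\sqint_{\Omega}\Delta^{\circ}u\,v\,dx=-\sqint_{\Omega}Du\cdot Dv\,dx$ for every $v\in V_{0}^{\circ}(\overline{\Omega})$. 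Substituting this identity yields
\[
\left.\frac{d}{dt}\right|_{t=0}E^{\circ}(u+tv)=\sqint_{\Omega}\left(-\Delta^{\circ}u+W'(u)\right)v\,dx\qquad\text{for all }v\in V_{0}^{\circ}(\overline{\Omega}),
\]
so that $u$ is a critical point of $E^{\circ}$ on $V_{g}^{\circ}(\overline{\Omega})$ if and only if this integral vanishes for every admissible $v$.

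Finally comes the fundamental-lemma step. For each $a\in\Omega^{*}\cap\Gamma$ the grid function $\sigma_{a}$ belongs to $V_{0}^{\circ}(\overline{\Omega})$ (it is supported at the single interior point $a\notin(\partial\Omega)^{*}$), and testing against $v=\sigma_{a}$ collapses the hyperfinite sum to a single term, giving $\big(-\Delta^{\circ}u(a)+W'(u(a))\big)\,d(a)=0$. Since $d(a)>0$ by Definition \ref{def:scorre}, this forces $-\Delta^{\circ}u(a)+W'(u(a))=0$ at every interior grid point, i.e. exactly (\ref{eq:a-1}); conversely, if (\ref{eq:a-1}) holds the first variation vanishes for all $v$, establishing the equivalence.

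This last step is where the ultrafunction framework pays off: the passage from the weak formulation to the pointwise equation, normally the delicate analytic point in the calculus of variations, is here immediate because $\sqint$ is a genuine sum with strictly positive weights $d(a)$ and the indicator ultrafunctions $\sigma_{a}$ are available as test functions. The only points requiring care are the termwise differentiation of the hyperfinite sum (justified by $\Lambda$-limits, since each approximant is an ordinary finite sum of $C^{1}$ contributions) and the verification that $\sigma_{a}\in V_{0}^{\circ}(\overline{\Omega})$; I expect the latter to be the sole, and minor, obstacle.
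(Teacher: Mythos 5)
Your proposal is correct and follows essentially the same route as the paper: compute the first variation of the two terms of $E^{\circ}$ separately, use the defining identity of $\Delta^{\circ}$ to rewrite the gradient term, and arrive at $dE^{\circ}(u)[v]=\sqint_{\Omega}\left(-\Delta^{\circ}u+W'(u)\right)v\,dx$ for all $v\in V_{0}^{\circ}(\overline{\Omega})$. The only difference is that the paper stops at this weak formulation (``which proves our thesis''), whereas you also spell out the passage to the pointwise equation (\ref{eq:a-1}) by testing with the delta-like ultrafunctions $\sigma_{a}$ and using $d(a)>0$ --- a step the paper leaves implicit and which is indeed immediate in this hyperfinite setting.
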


\begin{proof}
We use the expression of $\sqint$ as given in Definition \ref{def:deri}. As 
\[
E^{\circ}(u)=\sqint_{\Omega}\left(\frac{1}{2}\left|Du\right|^{2}+W(u)\right)dx
\]

let us compute separately the variations given by $\frac{1}{2}\left|Du\right|^{2}$ and $W(u)$. As 

\[\sqint_{\Omega^{\ast}}\frac{1}{2}\left|Du\right|^{2}dx=\sum_{a\in\Gamma\cap\Omega^{\ast}}\frac{1}{2}\left|Du(a)\right|^{2}da\]

is a quadratic form, for $v\in V_{0}^{\circ}(\overline{\Omega})$ we have that 

\[ \left(\frac{d}{du}\right)^{\ast}\left(\sqint_{\Omega^{\ast}} \frac{1}{2}\left|Du\right|^{2}dx\right)[v]=\sqint_{\Omega^{\ast}} DuDv dx =\\ \sqint_{\Omega^{\ast}} \left(-\Delta^{\circ}u\cdot v\right) dx.\]

The variation given by $W(u)$ for $v\in V_{0}^{\circ}(\overline{\Omega})$ is

\[ \left(\frac{d}{du}\right)^{\ast}\left(\sqint_{\Omega^{\ast}} \left(W(u)dx\right)\right)[v] = \left(\frac{d}{du}\right)^{\ast}\sum_{a\in\Gamma\cap\Omega^{\ast}} W(u(a))v(a)da = \sqint_{\Omega^{\ast}} W^{\prime}(u(x))v(x) dx. \]

Therefore the total variation of $E^{\circ}$ is 
\[
dE^{\circ}(u)\left[v\right]=\sqint_{\Omega^{\ast}}\left(-\Delta^{\circ} u+W'(u))\right)v\,dx,
\]

which proves our thesis.
\end{proof}

The existence of a ultrafunction solution of problem SP follows from the following Theorem:

\begin{lemma}\label{minimizer last problem}
The functional (\ref{eq:ene-1-1}) has a minimizer.
\end{lemma}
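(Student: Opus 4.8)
The plan is to realize $E^{\circ}$ as the $\Lambda$-limit of a net of finite-dimensional energies and to run the Faedo--Galerkin scheme underlying Theorem \ref{general existence}. For each $\lambda\in\mathfrak{L}$, Definition \ref{def:scorre}.(2) and the ultrafunction derivative turn the energy into a genuine finite-dimensional functional
\[
E_{\lambda}(u)=\sum_{a\in\Gamma_{\lambda}}\left(\tfrac{1}{2}\left|D u(a)\right|^{2}+W(u(a))\right)d(a),
\]
acting on grid functions $u\colon\Gamma_{\lambda}\to\mathbb{R}$ subject to the prescribed boundary values $u(a)=g(a)$ for $a\in\partial\Omega\cap\Gamma_{\lambda}$. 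The decisive structural gain over the classical formulation is that the admissible set
\[
\mathcal{A}_{\lambda}=\left\{u\mid u(a)\neq0\ \ \forall a\in\Gamma_{\lambda}\right\}
\]
is a nonempty open subset of the affine space cut out by the boundary condition: on a finite grid a function may change sign between adjacent nodes without ever vanishing, whereas any continuous function with sign-changing data $g$ must hit zero. Thus, unlike the classical $E$, the functional $E_{\lambda}$ is not identically $+\infty$.

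First I would fix a reference grid function $\bar{g}$ realizing the boundary data and write $u=\bar{g}+v$ with $v$ vanishing on $\partial\Omega\cap\Gamma_{\lambda}$, so that minimization takes place over a linear space; on $\mathcal{A}_{\lambda}$ the functional $E_{\lambda}$ is continuous, being a finite sum of continuous terms. I would then establish two a priori properties. The \emph{barrier at the origin}: since $W(t)\to+\infty$ as $t\to0$, whenever some nodal value $u(a)\to0$ one has $E_{\lambda}(u)\to+\infty$, so every minimizing sequence stays uniformly away from the zero locus and cannot escape to $\partial\mathcal{A}_{\lambda}$. The \emph{coercivity at infinity}: the discrete Dirichlet form $\sum_{a}\tfrac12|Du(a)|^{2}d(a)$, anchored by the fixed nonzero boundary values $g$, is a positive-definite quadratic form in the interior nodal values and hence bounds $E_{\lambda}$ below by $c\,\|v\|^{2}-C$, while the potential contributes only a lower-order term because the growth hypothesis $\overline{\lim}_{t\to\pm\infty}W(t)/t^{2}=0$ forces $W$ to be subquadratic; consequently $E_{\lambda}(u)\to+\infty$ as $\|v\|\to\infty$.

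Combining the two properties, the sublevel sets of $E_{\lambda}$ inside $\mathcal{A}_{\lambda}$ are bounded by coercivity and closed by the barrier, which keeps them away from the excluded zero set; in finite dimension they are therefore compact, and Weierstrass' theorem yields a minimizer $u_{\lambda}\in\mathcal{A}_{\lambda}$ with $u_{\lambda}(a)=g(a)$ on the boundary nodes. Taking the $\Lambda$-limit exactly as in the proof of Theorem \ref{general existence}, I set $u=\lim_{\lambda\uparrow\Lambda}u_{\lambda}$; the minimality $E_{\lambda}(u_{\lambda})\le E_{\lambda}(w_{\lambda})$ passes to the limit to give $E^{\circ}(u)\le E^{\circ}(w)$ for every admissible $w\in V_{g}^{\circ}(\overline{\Omega})$, while by transfer $u(a)\neq0$ for all $a\in\Gamma$ and $u=g^{\circ}$ on $(\partial\Omega)^{*}\cap\Gamma$, so that $u\in V_{g}^{\circ}(\overline{\Omega})$ is the desired minimizer.

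The main obstacle is the coercivity step. The hypothesis $\overline{\lim}\,W(t)/t^{2}=0$ only controls $W$ from above, so to rule out the energy drifting to $-\infty$ along directions where the nodal values blow up one must genuinely exploit the positive definiteness of the discrete Dirichlet form with nonzero boundary data and check that the potential sum is of lower order with respect to $\|v\|^{2}$. Establishing this uniform quadratic lower bound — equivalently, the invertibility of the discrete Dirichlet Laplacian on $\Gamma_{\lambda}$ — together with the lower-order estimate on the potential is the only nontrivial point; the barrier at the origin and the passage to the $\Lambda$-limit are then routine.
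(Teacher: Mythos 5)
Your overall strategy is the paper's own, merely unfolded: the paper works directly at the hyperfinite level, observing that $V_{g}^{\circ}(\overline{\Omega})$ is a hyperfinite-dimensional affine manifold, that the sublevel sets $E^{c}$ are hypercompact and $E^{\circ}$ is hypercontinuous, and concluding by transfer of the Weierstrass theorem; your scheme of minimizing at each finite level and then taking a $\Lambda$-limit is precisely the net of which that transferred argument is the limit. One presentational caveat: the weights $d(a)$, the grid $\Gamma$ and the operator $D$ exist only at the limit level (on $V^{\circ}$), so the explicit grid-sum functional $E_{\lambda}$ you write down is not literally available at a finite $\lambda$; it is cleaner either to argue internally by transfer, as the paper does, or to take $E_{\lambda}$ to be the classical energy restricted to the finite-dimensional space $\mathrm{Span}(V\cap\lambda)$ with the boundary constraint.

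More importantly, the coercivity gap you flag at the end is genuine and your sketch does not close it. The hypothesis $\overline{\lim}_{t\rightarrow\pm\infty}W(t)/t^{2}=0$ is one-sided: it gives $W(t)\leq\epsilon t^{2}$ for large $|t|$ but imposes no lower bound, so the potential term (hence $E_{\lambda}$) may be unbounded below along nodal values tending to infinity, and the word ``subquadratic'' in your second paragraph is doing work the hypothesis does not support. Moreover, the positive definiteness of the discrete Dirichlet form that you want to play against the potential is not automatic here: as the paper itself remarks at the end of this section, the kernel of $D$ may be strictly larger than the constants, so $\sqint|Dv|^{2}\,dx$ can vanish on nonzero $v$ with zero boundary values and no discrete Poincar\'e inequality comes for free. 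To be fair, the paper's proof is equally silent on this point --- it simply asserts that the sublevel sets are hypercompact, which presupposes exactly the internal boundedness you are trying to establish --- so your proposal matches the paper both in its approach and in its weak spot; a complete argument needs either a strengthened hypothesis on $W$ (for instance $W$ bounded below, or a two-sided limit $W(t)/t^{2}\rightarrow0$ combined with a Poincar\'e-type inequality for $D$) or some other control of the sublevel sets.
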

\begin{proof}
The functional $E^{\circ}(u)$ is coercive in the sense
that for any $c\in\mathbb{R}^{*}$
\[
E^{c}:=\left\{ u\in V_{g}(\overline{\Omega})\,\,|\,\,E^{\circ}(u)\leq c\right\} 
\]
is hypercompact (in the sense of NSA) since $V_{g}(\overline{\Omega})$
is a hypefinite dimensional affine manifold. Then, since $E^{\circ}$
is hypercontinuous (in the sense of NSA), the result follows.
\end{proof}

Regarding $\Xi$ being a minimal surface, we can prove the following:

\begin{prop}\label{jede}

Let $u$ be the ultrafunction minimizer of Problem \ref{eq:ene-1-1} as given by Lemma \ref{minimizer last problem}. Then the sets
\[
\Omega_{1}=\left\{ \,x\in\Omega\,\,|\,\,\forall y\in\mathfrak{mon}(x)\cap\Gamma,\,\,u(y)>0\right\}, 
\]
\[
\Omega_{2}=\left\{ \,x\in\Omega\,\,|\,\,\forall y\in\mathfrak{mon}(x)\cap\Gamma,\,\,u(y)<0\right\} 
\]
are open, hence 
\[
\Xi:=\left\{ \,x\in\overline{\Omega}\,\,|\,\,\exists y_{1},y_{2}\in\mathfrak{mon}(x)\cap\Gamma,\,u(y_{1})<0,\,u(y_{2})>0\right\} 
\]
is closed.

\end{prop}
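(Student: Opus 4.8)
The plan is to reduce the whole statement to a single \emph{monad-to-ball} principle: if the minimizer $u$ is strictly positive at every grid point infinitely close to a point $x_{0}$, then it is already positive at every grid point lying in a \emph{standard} ball around $x_{0}$. Granting this, openness of $\Omega_{1}$ and $\Omega_{2}$ is immediate, and closedness of $\Xi$ follows by complementation.

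First I would record the crucial consequence of the constraint (\ref{eq:c}): since $u(x)\neq 0$ for every $x\in(\overline{\Omega})^{*}\cap\Gamma$, each grid point in the domain of $u$ is unambiguously a point where $u>0$ or a point where $u<0$. Combined with the property of $\Gamma$ that $\mathfrak{mon}(x)\cap\Gamma\neq\emptyset$ for every $x\in\mathbb{R}^{N}$, this yields a trichotomy: for $x\in\Omega$ exactly one of the following holds, namely that all grid points of $\mathfrak{mon}(x)$ carry $u>0$ (so $x\in\Omega_{1}$), all carry $u<0$ (so $x\in\Omega_{2}$), or both signs occur (so $x\in\Xi$).

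The heart of the matter is the monad-to-ball principle, which I would establish by an external/internal-set (underspill) argument. Fix $x_{0}$ with $u(y)>0$ for all $y\in\mathfrak{mon}(x_{0})\cap\Gamma\cap(\overline{\Omega})^{*}$, and consider
\[
A:=\bigl\{\delta\in\mathbb{R}^{*}\mid \delta>0\ \text{and}\ \forall y\in\Gamma\cap(\overline{\Omega})^{*},\ |y-x_{0}|\leq\delta\Rightarrow u(y)>0\bigr\}.
\]
Because $\Gamma$ is hyperfinite, $(\overline{\Omega})^{*}$ is a natural extension, and $u$ is an internal function, the set $A$ is an \emph{internal} subset of $\mathbb{R}^{*}$. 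By hypothesis $A$ contains every positive infinitesimal $\delta$, since $|y-x_{0}|\leq\delta$ then forces $y\in\mathfrak{mon}(x_{0})$. As the set of positive infinitesimals (the positive part of $\mathfrak{mon}(0)$) is external, the internal set $A$ must also contain some non-infinitesimal $\delta_{0}>0$; any standard $0<\delta<st(\delta_{0})$ is then a standard radius with $u(y)>0$ for all $y\in\Gamma\cap(\overline{\Omega})^{*}$ satisfying $|y-x_{0}|\leq\delta$. Shrinking $\delta$ so that $B_{\delta}(x_{0})\subseteq\Omega$, a triangle-inequality estimate shows that every standard $x_{1}\in B_{\delta}(x_{0})$ has $\mathfrak{mon}(x_{1})\cap\Gamma\subseteq\{y:|y-x_{0}|<\delta\}$, whence $u>0$ on it and $x_{1}\in\Omega_{1}$; thus $\Omega_{1}$ is open. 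The argument for $\Omega_{2}$ is identical with the reversed inequality. Finally, applying the same principle at a point $x_{0}\in\overline{\Omega}\setminus\Xi$ — where, by the trichotomy and $u\neq0$, all grid points of the monad share a single sign — produces a standard ball disjoint from $\Xi$, so $\Xi$ is closed.

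I expect the main obstacle to be precisely the justification that $A$ is internal and the correct invocation of the externality of the infinitesimals: the entire reduction hinges on $A$ being internal, so that ``containing all infinitesimals'' forces ``containing a non-infinitesimal.'' This is exactly where the hyperfiniteness of $\Gamma$ and the internal character of the minimizer $u$ (as a $\Lambda$-limit) are indispensable. By contrast, the trichotomy, the triangle-inequality passage from the ball condition back to the monad condition at nearby points, and the complementation giving closedness of $\Xi$ are all routine.
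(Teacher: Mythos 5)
Your proof is correct and follows essentially the same route as the paper: the paper's one\--line proof invokes overspill to pass from ``$u>0$ on all grid points of the monad'' to ``$u>0$ on all grid points of a standard ball,'' and your internal set $A$ together with the externality of the positive infinitesimals is precisely the standard justification of that overspill step (the paper calls it overspill rather than underspill, but the mechanism is the one you describe). Your explicit triangle\--inequality passage back to nearby monads and the separate treatment of boundary points of $\overline{\Omega}\setminus\Xi$ are slightly more careful than the paper's, but not a different argument.
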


\begin{proof} This follows from overspill\footnote{Overspill is a well known and very useful property in nonstandard analysis. The idea behind the version that we use here is the following: if a certain property $P(x)$ holds for every $x\sim 0$ then there must be a real number $r>0$ such that $P(x)$ holds for every $x<r$. For a proper formulation of overspill we refer to \cite{keisler76}.}: let us prove it for $\Omega_{1}$. Let $x\in\Omega_{1}$. By definition of $\Omega_{1}$, for every $\varepsilon\sim 0$ we have that $u(y)<0$ for every $y\in B_{\varepsilon}(x)\cap\Gamma$. Hence by overspill there exists a real number $r>0$ such that $u(y)<0$ for every $y\in B^{\ast}_{r}(x)\cap \Gamma$. As $B^{\sigma}_{r}(x)\subset B^{\ast}_{r}(x)\cap \Gamma$, we deduce that the open ball $B_{r}(x)\subseteq\Omega_{1}$.
 \end{proof}

Notice that Property (1) in Proposition \ref{jede} is a first step towards Property \ref{eq:ms} in the classical formulation of Problem SP. It is our conjecture, in fact, that $\Xi$ is a minimal surface, at least under some rather general hypothesis. We have not been able to prove this yet, however.

Let us conclude with a remark. When studying problems like Problem \ref{eq:ene-1-1} with ultrafunctions, one would like to be able to generalize certain properties of elliptic equations based on the maximum principle: for example, one would expect to have the following properties:

\begin{enumerate}
\item Let $\Omega$ be a bounded connected open set with smooth boundary and let $g$ be a bounded function. Then if
\[
u=w^{\circ}+\psi
\]
is the canonical splitting of $u$ as given in Definition \ref{splitting}, we have that $w\in L^{\infty}$ e $\psi(x)\sim 0$ for every $x\in\Omega$;
\item let $\Omega_{1},\Omega_{2}$ be the sets defined in Proposition \ref{jede}. Then in $\Omega_{1}\cup\Omega_{2}$ we have 
\[
-\Delta w+W'(w)=0,
\]
\[\Delta\psi(x)\sim 0;\]

\item if $a=inf(g)$ e $b=sup(g)$ and $W'(t)\geq0$ for all $t\in\mathbb{R}\setminus(a,b)$, we have that

\[ a\leq u(x)\leq b.\]
\end{enumerate}

However, in the spaces of ultrafunctions constructed in this paper the maximum principle does not hold directly: this is due to the fact that the kernel of the derivative is, in principle, larger than the space of constants. This problem could be avoided by modifying the space of ultrafunctions: as this leads to some technical difficulties, we prefer to postpone this study to a future paper.

\bigskip

\end{document}